\documentclass[12pt,a4paper]{amsart}
\usepackage{graphicx,amssymb}
\usepackage{amsmath,amssymb,amscd}
\input xy
\xyoption{all}
\usepackage[all]{xy}
\usepackage{tikz}

\usepackage{hyperref}

\newcommand{\hra}{\hookrightarrow}
\newcommand{\ra}{\rightarrow}

\newcommand{\CC}{\mathbb C}

\newcommand{\cO}{\mathcal{O}}

\newcommand{\Ext}{\operatorname{Ext}}

\newcommand{\Hom}{\mbox{Hom}}

\newcommand{\Cliff}{\mbox{Cliff}}
\newcommand{\gr}{\mbox{gr}}
\newcommand{\rk}{\mbox{rk}}

\newcommand{\Cl}{\operatorname{Cliff}}
\theoremstyle{plain}
\newtheorem{theorem}{Theorem}[section]
\newtheorem{lem}[theorem]{Lemma}
\newtheorem{prop}[theorem]{Proposition}
\newtheorem{cor}[theorem]{Corollary}

\newtheorem{rem}[theorem]{Remark}

\numberwithin{equation}{section}
\begin{document}
\title[BN-theory for genus 5]{Higher rank BN-theory for curves of genus 5}

\author{H. Lange}
\author{P. E. Newstead}

\address{H. Lange\\Department Mathematik\\
              Universit\"at Erlangen-N\"urnberg\\
              Cauerstra\ss e 11\\
              D-$91058$ Erlangen\\
              Germany}
              \email{lange@mi.uni-erlangen.de}
\address{P.E. Newstead\\Department of Mathematical Sciences\\
              University of Liverpool\\
              Peach Street, Liverpool L69 7ZL, UK}
\email{newstead@liv.ac.uk}

\thanks{Both authors are members of the research group VBAC (Vector Bundles on Algebraic Curves). The second author 
would like to thank the Department Mathematik der Universit\"at 
         Erlangen-N\"urnberg for its hospitality}
\keywords{Curves of genus 5, semistable vector bundle, Clifford index, Brill-Noether theory}
\subjclass[2010]{Primary: 14H60}

\begin{abstract}
In this paper, we consider higher rank Brill-Noether theory for smooth curves of genus 5, obtaining new upper bounds for non-emptiness of Brill-Noether loci and many new examples.
\end{abstract}
\maketitle

\section{Introduction}
Let $C$ be a smooth complex projective curve and let $B(n,d,k)$ denote the {\it Brill-Noether locus} of stable bundles on $C$ of rank 
$n$ and degree $d$ with at least $k$ independent sections (for the formal definition, see Section \ref{back}). This locus has a natural 
structure as a subscheme of the moduli space of stable bundles on $C$ of rank $n$ and degree $d$.

In the case $n=1$, the Brill-Noether loci are classical objects. For $n>1$, 
the study began towards the end of the 1980s and the situation is much less clear, even on a general 
curve, and there is a great deal that is not known. The problem is completely 
solved only for $g\le3$ (see \cite{bgn,m1,m2} and Proposition \ref{pln2}), although there are strong results for hyperelliptic and bielliptic curves (see \cite{bmno} and \cite{b}) and for $g=4$ (see \cite{ln2}). 

Our object in this paper is to extend the results of \cite{ln2} to non-hyperelliptic curves of genus $5$. The main results of the paper concern new upper 
bounds on $k$ for the non-emptiness of $B(n,d,k)$ and the corresponding loci $\widetilde{B}(n,d,k)$ for 
semistable bundles. Since a complete answer is known for $d\le2n$ (see Proposition \ref{pln2}), it is sufficient in view of Serre duality to restrict to the range $2n<d\le4n$. To state our results, it is necessary to distinguish the case of 
trigonal curves from that of curves of Clifford index $2$. For trigonal curves, we have

\noindent{\bf Theorem \ref{thm4.7}.} \emph{Let $C$ be a trigonal curve of genus $5$. If $2n < d \leq 4n$ and $\widetilde B(n,d,k) \neq \emptyset$, then one of the following holds.
\begin{enumerate}
 \item[(i)] $2n < d \leq \frac{7n}{3}$ and $k \leq n +\frac{1}{4}(d-n)$;  
 \item[(ii)] $\frac{7n}{3} < d \leq \frac{5n}{2}$ and $k \leq d-n$;
 \item[(iii)] $\frac{5n}{2} \leq d \leq \frac{8n}{3}$ and $k \leq \frac{3n}{2}$;
 \item[(iv)] $\frac{8n}{3} \leq d < 3n$ and $ k \leq \frac{n}{2} + \frac{3d}{8}$;
 \item[(v)] $d = 3n$ and $k \leq2n$;
 \item[(vi)] $3n < d < 4n$ and $k \leq \frac{2}{5}(2n+d)$;
 \item[(vii)] $d = 4n$ and $k \leq 2n$.
\end{enumerate}
If $B(n,d,k)\ne\emptyset$, \rm{(v)} can be replaced by
\begin{enumerate}
\item[(v)$'$] $d=3n$ and either $k\le\frac{8n}5$ or $(n,d,k)=(1,3,2)$.
\end{enumerate}} 

These new upper bounds look complicated and can be best appreciated from  Figures 1 and 2 in Section \ref{bn}. They are probably not best possible, but they represent a substantial improvement on the known bound $k\le\frac12(d+n)$ (see Proposition \ref{p2.2}), especially in the range $2n\le d\le3n$. Note that there is no reason why the optimal upper bound should take a simple form.

For curves of Clifford index $2$, we  have a somewhat simpler result.

\noindent{\bf Theorem \ref{thm5.2}.} \emph{Let $C$ be a curve of genus $5$ and Clifford index $2$. If $2n < d \leq 4n$ and $\widetilde B(n,d,k) \neq \emptyset$, then one of the following holds.
\begin{enumerate}
 \item[(i)] $2n < d \leq \frac{7n}{3}$ and $ k \leq n + \frac{1}{4}(d-n)$;
 \item[(ii)] $\frac{7n}{3} < d \leq \frac{5n}{2}$ and $k < d-n$;
 \item[(iii)] $\frac{5n}{2} < d \leq 4n$ and $ k \leq n + \frac{1}{3}(d-n)$.
\end{enumerate}}

For a general curve, this theorem can be slightly improved in the range $\frac{5n}2<d<3n$ by replacing (iii) by parts (iii) and (iv) from Theorem \ref{thm4.7} (see Theorem \ref{thm5.3}). In any case, Theorem \ref{thm5.2} provides an improvement on the known bound $ k \leq n + \frac{1}{3}(d-n)$ (see Proposition \ref{p1.6}) in the range $2n<d<\frac{5n}2$. For a graphical representation, see Figures 3 and 4 in Section \ref{bn}. Again the results are almost certainly not best possible. 

We also produce 
a large number of examples of stable bundles which come close to attaining the upper bounds of Theorems \ref{thm4.7} and \ref{thm5.2}.
Many of these are constructed using elementary transformations, the only problem here being to prove stability.
Some of these were already established in \cite{ln2}, but others are new.
\bigskip

In Section \ref{back}, we give some background and describe some known results. In Section \ref{nonh}, 
we obtain upper bounds and also some existence results for non-hyperelliptic curves of genus $5$ in general.
Section \ref{trig} contains results for trigonal curves of genus $5$, which are especially strong in the range $2n<d\le3n$. For curves of genus $5$ and Clifford index $2$ (see Section \ref{cliff}), 
the results are quite similar for $2n<d\le3n$, but are considerably stronger for $3n<d\le4n$. In Section \ref{ext}, we consider bundles which maximise the number of sections for given rank and degree, 
bundles of ranks $2$ and $3$ and bundles of rank $n$ with $h^0 = n+1$. Finally, in Section \ref{bn}, we provide a graphical representation of our results.

Our methods are inspired in particular by those of \cite{bmno} and work of Mercat \cite{m1,m2}. 

We thank the referees for some helpful suggestions which have improved the presentation.

\section{Background and some known results}\label{back}

Let $C$ be a smooth projective curve of genus $g$. Denote by $M(n,d)$ the moduli space of stable vector 
bundles of rank $n$ and degree $d$ and by $\widetilde M(n,d)$ the moduli space of 
S-equivalence classes of semistable bundles of rank $n$ and degree $d$. For any integer
$k \geq 1$ we define 
$$
B(n,d,k) := \{ E \in M(n,d) \; | \; h^0(E) \geq k \}
$$
and
$$
\widetilde B(n,d,k) := \{ [E] \in \widetilde M(n,d)  \;|\; h^0( \gr E) \geq k \},
$$
where $[E]$ denotes the S-equivalence class of $E$ and $\gr E$ is the graded object defined 
by a Jordan-H\"older filtration of $E$. The locus $B(n,d,k)$ has an {\it expected dimension}
$$
\beta(n,d,k):=n^2(g-1)+1-k(k-d+n(g-1)),
$$
known as the {\it Brill-Noether number}.
For any vector bundle $E$ on $C$, we write $n_E$ for the rank of $E$, $d_E$ for the degree of $E$ and $\mu(E)=\frac{d_E}{n_E}$ for the slope of $E$. 
The vector bundle $E$ is said to be {\it generated} if the evaluation map $H^0(E)\otimes {\mathcal O_C}\to E$ is surjective. 

We recall the {\it dual span construction} (see, for example, \cite{bu} and \cite{m1}),
defined as follows. Let $L$ be a generated line bundle on $C$ with $h^0(L) \geq 2$.  Consider
the evaluation sequence
\begin{equation} \label{eq2.1}
0 \ra E_L^* \ra H^0(L) \otimes \cO_C \ra L \ra 0.
\end{equation}
Then $E_L$ is a bundle of rank $h^0(L) -1$ and degree $d_L$ with $h^0(E) \geq h^0(L)$.
 It is called the {\it dual span of} $L$ and is also denoted by $D(L)$. Although $E_L$ is not 
necessarily stable, this is frequently the case.

We begin by recalling some known results. In investigating the non-emptiness of $B(n,d,k)$ 
and $\widetilde B(n,d,k)$, it is sufficient by Serre duality and Riemann-Roch to consider the case $d\le n(g-1)$. 
For $g=0$ and $g=1$, there is nothing to be done. For $g=2$ and $g=3$, a complete solution is known 
(see \cite{bgn, m1, m2}). For $g=4$, some strong results were obtained in \cite{ln2}. For future reference, we note some facts here. 

\begin{prop}\label{pln2} Let $C$ be a curve of genus $g\ge3$ and suppose $k\ge1$.
\begin{enumerate}
\item[(i)] If $0<d<2n$, then $\widetilde B(n,d,k)\ne\emptyset$ if and only if $k-n\le\frac1g(d-n)$. Moreover
$B(n,d,k)\ne\emptyset$ under the same conditions except when $(n,d,k)=(n,n,n)$ with $n\ge2$.
\item[(ii)] If $C$ is non-hyperelliptic and $d=2n$, then $\widetilde B(n,d,k)\ne\emptyset$ if and only if $k\le\frac{ng}{g-1}$.
\item[(iii)] If $C$ is non-hyperelliptic and $d=2n$, then $B(n,d,k)\ne\emptyset$ if and only if $k\le\frac{n(g+1)}g$ or $(n,d,k)=(g-1,2g-2,g)$. Moreover $B(g-1,2g-2,g)=\{D(K_C)\}$.
\end{enumerate}
\end{prop}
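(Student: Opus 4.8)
The plan is to follow the pattern of \cite{bgn} and \cite{ln2}, separating the ``only if'' (upper bound) statements, which rest on Clifford-type inequalities, from the ``if'' statements, which are explicit constructions. For the upper bounds, the first step is to replace $E$ by $\gr E$ and so assume $E$ polystable, a direct sum of stable bundles $F_i$ all of slope $\mu=d/n$. Since each right-hand side in (i)--(iii) is linear in $(n_F,d_F)$ along the line of slope $\mu$, it suffices to bound $h^0(F)$ for $F$ stable: for $\mu<2$ this is the small-slope estimate $g\bigl(h^0(F)-n_F\bigr)\le d_F-n_F$ of \cite{bgn} (and \cite{m1,m2} for small $g$); for $\mu=2$ on a non-hyperelliptic curve one needs the sharper bound $h^0(F)\le\frac{n_Fg}{g-1}$, which I would prove from the evaluation sequence $0\to M_F\to H^0(F)\otimes\cO_C\to F\to 0$, using that $h^0(M_F)=0$ and running Riemann--Roch and Clifford's theorem on the semistable factors of $M_F$, equality forcing a Clifford-extremal configuration. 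Summing over the $F_i$ gives (i) and the inequality in (ii).

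For the ``if'' directions of (i), (ii) and the existence assertion in (iii) I would exhibit explicit (poly)stable bundles. Dual spans $D(L)$ of generated line bundles $L$ with $h^1(L)=0$ give stable bundles of rank $h^0(L)-1$, degree $d_L$ and $h^0(D(L))\ge h^0(L)$, realising the extremal locus $d=gk-(g-1)n$ and, for $d=2n$, the bundle $D(K_C)$; elementary transformations of trivial and other low-slope bundles, together with general extensions and direct sums, then interpolate over the permissible range of $(n,d,k)$. The only real work is to prove (semi)stability, for which I would use general-extension arguments as in \cite{bmno}, much being already available in \cite{ln2}. The triple $(n,n,n)$ with $n\ge2$ must be excluded from $B$ because $n$ general sections of a rank-$n$ degree-$n$ bundle generate a subsheaf of full rank and non-negative degree, forcing $E\cong\cO_C^{\oplus n}$ (the sub-case in which the sections fail to generate a full-rank subsheaf needing the separate argument mentioned below); since $\cO_C^{\oplus n}$ is strictly semistable with $h^0=n$, the triple is not excluded from $\widetilde B$.

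Part (iii) is the main point. Let $E$ be stable of rank $n$, degree $2n$ with $h^0(E)=h$; there is nothing to prove if $h\le n$, so assume $h>n$, and (supposing first that $E$ is generated by its sections) form $0\to M_E\to\cO_C^{\,h}\to E\to 0$ with $M_E$ of rank $h-n\ge1$, degree $-2n$ and $h^0(M_E)=0$. Dualising and using $h^0(E^*)=0$ shows $M_E^*$ is globally generated of degree $2n$ with $h^0(M_E^*)\ge h$; Riemann--Roch then gives that if $\mu(M_E^*)>2g-2$, equivalently $h<\frac{gn}{g-1}$, then $h^1(M_E^*)=0$ and hence $h\le\frac{n(g+1)}{g}$, while if $\mu(M_E^*)\le 2g-2$ then Clifford's theorem forces $M_E^*$ to be Clifford-extremal; since $C$ is non-hyperelliptic this is possible only when $\mu(M_E^*)=2g-2$, whence $M_E\cong(K_C^{-1})^{\oplus b}$ with $b=\frac{n}{g-1}$ and $E$ is a quotient of $\cO_C^{\,bg}$ with $h=bg$. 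For $b=1$ one reads off $E\cong D(K_C)$; for $b\ge2$ a Riemann--Roch count gives $\dim\Hom(D(K_C),E)\ge b\ge2$, which is impossible since $D(K_C)$ and $E$ are stable of the same slope $2$ while $\rk E=b(g-1)>g-1=\rk D(K_C)$, so this case cannot occur. Together with the (standard) stability of $D(K_C)$ on a non-hyperelliptic curve, this yields $B(g-1,2g-2,g)=\{D(K_C)\}$.

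The main obstacle is that the clean Riemann--Roch/Clifford dichotomy above assumes the relevant kernel bundle to be semistable; in general one must run the argument on the Harder--Narasimhan (equivalently, Jordan--H\"older) factors of that kernel bundle and reassemble the estimate, and it is here that the case analysis becomes delicate. Closely tied to this is the case, set aside above, in which $E$ is not generated by its sections: one then works with the subsheaf $E_0\subseteq E$ generated by $H^0(E)$, of smaller rank and slope $<\mu(E)$, and must still extract the required bound --- again via a kernel-bundle computation, but this is the most technical point. I would expect the bulk of the work to lie here, rather than in the constructions or in the uniqueness statement of (iii).
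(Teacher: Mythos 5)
The first thing to note is that the paper does not actually prove this proposition: it is quoted verbatim from the literature (``This is contained in \cite{bgn,m1,m2} and is also included in \cite[Propositions 2.1 and 2.2]{ln2}''), so your proposal has to be measured against those references rather than against an argument in the text. Your sketch does follow their general architecture --- evaluation/kernel bundles, a Riemann--Roch versus Clifford dichotomy, the dual span $D(K_C)$ in the extremal case of (iii), and elementary transformations and extensions for the existence statements --- so the overall plan is the right one.

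As a proof, however, it has genuine gaps, and they sit exactly where the real content of \cite{m1,m2} lies. First, the two reductions you set aside --- that $E$ need not be generated, and that the kernel bundle $M_E$ (hence $M_E^*$) need not be semistable --- are not technical afterthoughts: without semistability you can invoke neither Clifford's bound nor the vanishing $h^1(M_E^*)=0$, and the equality analysis does not simply ``reassemble'' over Harder--Narasimhan factors, since these have varying slopes and the extremal case must be controlled on each; handling this is essentially the whole of Mercat's slope-$2$ paper. Second, your exclusion of $b\ge 2$ in (iii) is wrong as justified: Riemann--Roch gives $\chi(D(K_C)^*\otimes E)=b(g-1)^2(1-g)<0$, so it does not yield $\dim\Hom(D(K_C),E)\ge b$. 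The correct route is that, since $h^0(E^*)=0$, the dualized sequence $0\ra E^*\ra \cO_C^{\oplus bg}\ra K_C^{\oplus b}\ra 0$ is forced to be the evaluation sequence of $K_C^{\oplus b}$, whose kernel splits as $(D(K_C)^*)^{\oplus b}$; hence $E\simeq D(K_C)^{\oplus b}$, which is not stable for $b\ge 2$. Third, the exclusion of $(n,n,n)$ is also misstated: $n$ independent sections generating a full-rank subsheaf give $\cO_C^{\oplus n}\subset E$ with torsion quotient of length $n$, so $E$ is a positive elementary transformation of $\cO_C^{\oplus n}$, not $\cO_C^{\oplus n}$ itself (indeed $\deg E=n\ne 0$); one must instead show that at each point of the support of the torsion quotient the constant subsheaf saturates to a line subbundle of degree $\ge 1$, contradicting stability at slope $1$. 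None of these is fatal to the strategy, but filling them in amounts to reproving the cited theorems of Brambila-Paz--Grzegorczyk--Newstead and Mercat, which the paper deliberately avoids.
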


This is contained in \cite{bgn,m1,m2} and is also included in \cite[Propositions 2.1 and 2.2]{ln2}.

\begin{cor} \label{c2.2}
If $2n < d \le 3n$ and $k-n \leq \frac{1}{g}(d - 2n)$, then $B(n,d,k) \neq \emptyset$.
\end{cor}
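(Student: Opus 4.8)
The plan is to deduce the statement from the known case $0<d<2n$ of Proposition~\ref{pln2} by the elementary device of lowering the degree by $n$ and then twisting back up by one point. Put $d':=d-n$; since $2n<d\le 3n$ we have $n<d'\le 2n$, and the hypothesis $k-n\le\frac1g(d-2n)$ is precisely the inequality $k-n\le\frac1g(d'-n)$. The first step is therefore to produce a \emph{stable} bundle $E'$ of rank $n$ and degree $d'$ with $h^0(E')\ge k$; granting this, fix any point $p\in C$ and set $E:=E'\otimes\cO_C(p)$. Then $E$ has rank $n$ and degree $d'+n=d$; it is stable because tensoring by a line bundle raises the slope of $E'$ and of each of its subsheaves by the same amount, hence preserves the stability inequality; and $h^0(E)\ge h^0(E')\ge k$, since tensoring the inclusion $\cO_C\hra\cO_C(p)$ given by the section vanishing at $p$ by the locally free sheaf $E'$ yields $E'\hra E$. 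Hence $E\in B(n,d,k)$.

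For the first step, suppose first that $2n<d<3n$, so that $n<d'<2n$. Then Proposition~\ref{pln2}(i) applies with $(n,d',k)$ in place of $(n,d,k)$: the numerical condition holds by the identity noted above, and the excluded triple $(n,n,n)$ cannot occur since $d'>n$. This produces the required $E'$ on any curve of genus $g\ge3$.

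The one case needing separate treatment — and the only place where anything beyond routine bookkeeping enters — is the boundary value $d=3n$, for which $d'=2n$ lies outside the range of Proposition~\ref{pln2}(i). Here the hypothesis reads $k\le n+\frac ng=\frac{n(g+1)}g$, so one instead invokes Proposition~\ref{pln2}(iii) (valid for non-hyperelliptic $C$) to obtain a stable bundle $E'$ of rank $n$ and degree $2n$ with $h^0(E')\ge k$, and then twists by $\cO_C(p)$ exactly as above to land in $B(n,3n,k)$. In short, the substance of the proof lies entirely in the two non-emptiness statements imported from Proposition~\ref{pln2}; the twisting construction itself is elementary, its one delicate point being that it is stability, and not merely semistability, that is preserved.
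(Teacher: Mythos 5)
Your argument is correct and is essentially the paper's own proof: the paper likewise obtains $B(n,d-n,k)\neq\emptyset$ from Proposition \ref{pln2}(i) and (iii) and then tensors by an effective line bundle of degree $1$. Your write-up merely spells out the details (the identification of the numerical conditions, the case split at $d=3n$, and the preservation of stability and of sections under the twist) that the paper leaves implicit.
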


\begin{proof}
$B(n,d',k) \neq \emptyset$ for $n < d' \le 2n$ and $k-n \le \frac{1}{g}(d'-n)$ by Proposition \ref{pln2}(i) and (iii). Tensoring by an effective line bundle of degree 1 gives the result.
\end{proof}

For hyperelliptic curves, a complete solution for $0\le d<4n$ is contained in \cite[Propositions 2.1, 2.2 and 2.3]{ln2}. This fully covers the cases $g\le4$ and can be completed for $g=5$ by the following proposition.

\begin{prop} \label{p1.4}
Let $C$ be a hyperelliptic curve of genus $g \geq 5$ and $d = 4n$. Then
$B(n,d,k) \neq \emptyset$ if and only if either $k \leq 2n$ or $(n,d,k) = (1,4,3).$
Moreover, $\widetilde B(n,d,k) \neq \emptyset$ if and only if $k \leq 3n$.
\end{prop}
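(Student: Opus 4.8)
The plan is to prove Proposition \ref{p1.4} by reducing the case $d=4n$ on a hyperelliptic curve to the already-solved case $d\le 2n$ via the hyperelliptic pencil, and then handling the extremal and semistable statements separately.

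\textbf{Step 1: Sufficiency for $B(n,d,k)$.} Let $g^1_2$ be the hyperelliptic pencil, with associated line bundle $H$ of degree $2$ and $h^0(H)=2$. Given $n$ and $k\le 2n$, I would write $k-n\le\frac1g(d'-n)$ with a suitable $d'\le 2n$ — concretely, by Proposition \ref{pln2}(i) and (iii), $B(n,d',k)\ne\emptyset$ for $n<d'\le 2n$ whenever $k-n\le\frac1g(d'-n)$ (and for $d'=2n$ whenever $k\le\frac{n(g+1)}g$, which covers $k\le 2n$ as soon as $g\ge 2$ automatically only for small $k$, so one must be a little careful about which $d'$ to pick). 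Tensoring a bundle in $B(n,d',k)$ by $H^{\otimes m}$ with $2m=d-d'=4n-d'$ preserves stability and the number of sections, landing in $B(n,4n,k)$. So the main content of Step 1 is bookkeeping: choosing $d'$ (an even number between $n$ and $2n$, taking $d'=2n$ when $n$ is even and $d'=2n$ or $2n-1$... but $2m$ must be even, so $d'$ must have the same parity as $d=4n$, i.e.\ $d'$ even) so that $k\le 2n$ guarantees non-emptiness of $B(n,d',k)$. For the exceptional triple, $(n,d,k)=(1,4,3)$ comes from a $g^1_2$ plus a point, or directly: on a hyperelliptic curve of genus $\ge 2$, $H^{\otimes 2}(p)$ has degree $4$ and, if $g\ge 3$, $h^0\le 2$, so one instead uses a line bundle $H^{\otimes 2}\otimes H'$ where... in fact $(1,4,3)$ should be handled via $B(1,2,2)$, which is nonempty only on hyperelliptic curves (it is $\{H\}$), tensored appropriately; the point is that $h^0$ of a degree-$4$ line bundle can be $3$ precisely when it is $H^{\otimes 2}$, giving $k=3>2=2n$.

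\textbf{Step 2: Necessity for $B(n,d,k)$.} This is the harder direction and I expect it to be the main obstacle. I would use the known bound $k\le\frac12(d+n)$ (Proposition \ref{p2.2}) as a starting point — for $d=4n$ this gives only $k\le\frac{5n}2$, which is weaker than $2n$, so it does not suffice. Instead I would argue on the hyperelliptic structure directly: if $E\in B(n,4n,k)$, consider the evaluation/multiplication with $H$ or the Clifford-type inequality for semistable bundles on hyperelliptic curves. A clean approach: on a hyperelliptic curve, for a semistable $E$ with $0\le\mu(E)\le 2g-2$ one has a Clifford bound, but here $\mu(E)=4 < 2g-2$ once $g\ge 4$, so $E$ lies in the "non-special-by-Clifford" range and one expects $h^0(E)\le$ (roughly) $d/2 + n\cdot(\text{correction})$. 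More precisely I would invoke or re-derive the hyperelliptic Clifford bound from \cite{ln2} (Propositions 2.1--2.3 there cover $0\le d<4n$) and take the limit/boundary $d\to 4n$: the bound for $d$ slightly less than $4n$ should be $k\le 2n$ plus a term vanishing at $d=4n$, and the edge case where equality $k=2n$ forces $E$ to be (a twist of) a direct sum built from $H$; the only way to exceed $2n$ is the rank-one degree-four line bundle $H^{\otimes 2}$, giving $(1,4,3)$. So Step 2 amounts to: (a) quote the $d<4n$ hyperelliptic bound, (b) a continuity/specialisation argument at the boundary $d=4n$ using that $B(n,d,k)$ non-empty for $d$ near $4n$ forces the $d=4n$ bound, being careful that the exceptional locus at $d=4n$ can only grow by dimension reasons controlled via the dual span or via sub-line-bundles generated by $H$.

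\textbf{Step 3: The semistable locus $\widetilde B(n,4n,k)$.} For sufficiency, take $H^{\otimes 2n}\oplus\cdots$: the bundle $H^{\otimes 2}\otimes(\text{line bundle of degree }2)$... more simply, $\gr E = (H^{\otimes 2})^{\oplus n}$ is a polystable bundle of rank $n$, degree $4n$, with $h^0 = 3n$, so $\widetilde B(n,4n,3n)\ne\emptyset$; any $k\le 3n$ follows by replacing some summands by generic degree-$4$ line bundles with $h^0=2$. For necessity, $k>3n$ would force $h^0(\gr E)>3n$ for a polystable bundle of slope $4$; but each stable summand $F$ of $\gr E$ has slope $4$ and $h^0(F)\le\frac{d_F}{2}+n_F$ on a hyperelliptic curve by the hyperelliptic Clifford bound (since $\mu(F)=4$), with equality iff $F$ is a power of $H$; summing gives $h^0(\gr E)\le 3n$, so $k\le 3n$. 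The hyperelliptic Clifford bound for stable bundles of slope exactly $4$ is thus the one technical input, and I would cite it from \cite{bmno} or \cite{ln2} rather than reprove it. Overall the proof is short modulo these citations; the genuine work is pinning down the boundary case $d=4n$ in Step 2 and identifying $(1,4,3)$ as the unique exception.
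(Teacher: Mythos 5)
The paper's own proof of Proposition \ref{p1.4} is a two-line citation: necessity of $k\le 2n$ for stable bundles is \cite[Theorem 6.2(2)]{bmno}, sufficiency is \cite[Theorem 6.1]{bmno} with $s=2$, and the semistable statement is deduced from these. Your attempt to give a self-contained proof instead is legitimate in principle, but it has two genuine gaps. First, Step 1 cannot reach $k=2n$. You propose to take $E\in B(n,d',k)$ with $d'\le 2n$ and tensor by $H^{\otimes m}$; this preserves stability and does not decrease $h^0$, but it does not increase $h^0$ either, so you need $B(n,d',k)\ne\emptyset$ with $k$ up to $2n$ already at $d'\le 2n$. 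That is false: Proposition \ref{pln2}(iii), which you invoke, is stated only for non-hyperelliptic curves, and in any case the ceiling it (and the hyperelliptic analogue in \cite{ln2}) gives at $d'=2n$ is roughly $k\le\frac{n(g+1)}{g}\approx\frac{6n}{5}$ for stable bundles, far below $2n$ once $n\ge2$. To reach $k=2n$ at $d=4n$ one must either construct directly at degree $4n$ (this is what \cite[Theorem 6.1]{bmno} does, via elementary transformations adapted to the $g^1_2$) or use the section-doubling of Lemma \ref{l2.3} with $N=H$ starting from a stable bundle in $B(n,2n,n)$ — neither of which appears in your Step 1, and the latter would still require you to establish $B(n,2n,n)\ne\emptyset$ on a hyperelliptic curve.

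Second, Step 2 is not a proof. There is no ``continuity/specialisation argument at the boundary $d\to 4n$'': the loci $B(n,d,k)$ for different $d$ live in different moduli spaces, non-emptiness is not semicontinuous in $d$, and knowing the sharp bound for all $d<4n$ gives you nothing at $d=4n$. The inequality $h^0(E)\le 2n$ for a stable bundle of slope $4$ and rank $\ge 2$ on a hyperelliptic curve of genus $\ge 5$ requires a direct argument (the content of \cite[Theorem 6.2(2)]{bmno}, via the multiplication map with $H$ and an analysis of the resulting filtration), which you never supply; this is exactly the step you identify as ``the main obstacle'' and then leave open. Your Step 3 is essentially sound (the bound $h^0(F)\le \frac{d_F}{2}+n_F$ per Jordan--H\"older factor gives $k\le 3n$, and $(H^{\otimes 2})^{\oplus n}$ realises it), as is the identification of $H^{\otimes 2}$ as the unique degree-$4$ line bundle with $h^0=3$, but the stable case in both directions remains unproved.
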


\begin{proof}
The necessity of the condition for $B(n,d,k)$ is a special case of \cite[Theorem 6.2(2)]{bmno}. 
The semistable case is easily deducible from this. 
For sufficiency, take $s = 2$ in \cite[Theorem 6.1]{bmno}. 
\end{proof}

We turn now to non-hyperelliptic curves.

\begin{prop} \label{p2.2}
Let $E$ be a semistable bundle on a non-hyperelliptic curve $C$ of rank $n$ and degree $d$.
\begin{enumerate}
 \item[(i)] If $1 \leq \mu(E) \leq 2g-3$, then $h^0(E) \leq \frac{1}{2} (d+n)$.
 \item[(ii)] If $\mu(E) \geq 3$, then $h^0(E) \leq d-n$.
\end{enumerate}
\end{prop}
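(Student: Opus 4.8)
The plan is to reduce both statements to the classical Clifford-type bound for semistable bundles together with the behaviour of sections under passing to sub- and quotient bundles. Recall that on any smooth curve, a semistable bundle $E$ with $0 \le \mu(E) \le 2g-2$ satisfies $h^0(E) \le \frac{d}{2} + n$ (the higher-rank Clifford inequality). This is not quite sharp enough for (i), so the key extra ingredient is that on a \emph{non-hyperelliptic} curve the Clifford index is at least $2$, which forbids the existence of line subbundles computing Clifford index $0$ or $1$; this is what will allow us to shave the bound down from $\frac{d}{2}+n$ to $\frac{d+n}{2}$ in the stated range.

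For part (i), I would argue as follows. Suppose $1 \le \mu(E) \le 2g-3$ and set $h^0(E) = k$. First dispose of the case $h^1(E) = 0$: then Riemann--Roch gives $k = d - n(g-1) \le d - n \le \frac{d+n}{2}$ since $d \le 2n(g-1) \le$ (a bound forcing $d \le 3n$ is not automatic, so here one uses $\mu \le 2g-3$ more carefully) --- more precisely $d-n(g-1) \le \frac12(d+n)$ is equivalent to $d \le n(2g-1)$, which holds since $\mu(E) \le 2g-3 < 2g-1$. So assume $h^1(E) \ne 0$, i.e. $H^0(K_C \otimes E^*) \ne 0$, giving a nonzero map $E \to K_C$ and hence a line bundle quotient, or dually a line subbundle $M \hookrightarrow E$ with $\deg M \ge$ something; the cleaner route is to take a section of $K_C \otimes E^*$, whose image is a subsheaf of $K_C$, yielding an effective divisor and a sub-line-bundle of $E$. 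I would then run the standard induction on rank: choose a line subbundle $L \subseteq E$ (respectively a line bundle quotient $E \twoheadrightarrow L$) maximising $h^0$, apply semistability to control $\deg L$, use $h^0(E) \le h^0(L) + h^0(E/L)$, and invoke the non-hyperellipticity (Clifford index $\ge 2$) precisely to exclude the extremal configurations where a sub-line-bundle of slope in the critical range would have too many sections. The numerics should then collapse to $k \le \frac12(d+n)$.

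For part (ii), with $\mu(E) \ge 3$, I would use the same dévissage but now with the stronger input that a semistable bundle of slope $\ge 3$ on a non-hyperelliptic (hence Clifford index $\ge 2$) curve has its sub-line-bundles of slope $\ge 1$ contributing at most $\deg L$ sections (since a line bundle $L$ with $h^0(L) \ge \deg L$ forces $h^0(L) \le 1$ unless $\deg L \ge 2g-2$, and in the relevant range $h^0(L) \le \deg L - 1$). Peeling off rank one at a time and summing, $h^0(E) \le \sum (\deg L_i - 1) = d - n$, which is exactly the assertion; one must check the boundary cases where some $L_i$ is the canonical bundle or has $h^1 = 0$ separately, but these only improve the estimate.

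The main obstacle is part (i): getting the constant right in the critical slope range $1 \le \mu \le 2g-3$ requires a careful bookkeeping of which line subbundles can have "many" sections, and it is exactly here that non-hyperellipticity must be used in an essential way --- a hyperelliptic curve would admit a degree-$2$ pencil violating the bound. Organising the induction so that the Clifford-index-$\ge 2$ hypothesis is invoked cleanly at each step, rather than ad hoc, is the delicate point; I expect the cleanest formulation is to prove the contrapositive, showing that $h^0(E) > \frac12(d+n)$ would produce, via the dual span or a maximal destabilising-type subsheaf, a line bundle of Clifford index $\le 1$.
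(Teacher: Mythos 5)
The paper offers no argument here at all: its ``proof'' is a citation of Proposition 3.1 and Lemma 3.2 of \cite{ln2}, with the remark that part (i) is a theorem of Re \cite{re}. Measured against what would actually be required, your sketch has a genuine gap at its central step, the rank induction for part (i). Your reduction to the case $h^1(E)\ne 0$ is fine, but the d\'evissage after that does not close: if $L\subset E$ is a line subbundle, the quotient $E/L$ need not be semistable and its slope need not lie in $[1,2g-3]$, so no inductive hypothesis applies to it; and the per-factor estimate you implicitly need, $h^0(L_i)\le\frac{1}{2}(\deg L_i+1)$ for \emph{every} line bundle occurring in a full flag of $E$, is false for $L_i\simeq\mathcal{O}_C$ (which has $h^0=1$) and for factors of degree $\ge 2g$, neither of which can be excluded --- a semistable bundle of positive slope can certainly contain $\mathcal{O}_C$, and the intermediate graded pieces of a flag are not controlled by semistability of $E$. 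This is exactly why the naive d\'evissage only yields the weak Clifford bound $h^0(E)\le\frac{d}{2}+n$; the gain of $\frac{n}{2}$ asserted in (i) is the content of Re's theorem, whose proof does not peel off line subbundles at all but instead bounds $h^1(E)=h^0(K_C\otimes E^*)$ by estimating the rank of the multiplication map $H^0(E)\otimes H^0(K_C)\to H^0(E\otimes K_C)$ using general position of points on the canonical curve --- this is where non-hyperellipticity really enters. Your closing suggestion that $h^0(E)>\frac{1}{2}(d+n)$ would ``produce a line bundle of Clifford index $\le 1$'' is a hope, not an argument, and I see no mechanism that delivers such a line bundle.

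Part (ii) as you set it up has the same per-factor defect: $h^0(L_i)\le\deg L_i-1$ fails whenever $\deg L_i\le 1$ and $h^0(L_i)\ge 1$ (again $\mathcal{O}_C$ is the obvious culprit), and these factors cannot be ruled out of a flag of $E$. But here there is an easy repair you missed: (ii) is essentially a corollary of (i) together with Riemann--Roch, with no filtration needed. For $3\le\mu\le 2g-3$, part (i) gives $h^0(E)\le\frac{1}{2}(d+n)\le d-n$, the last inequality being exactly $d\ge 3n$. For $\mu>2g-2$, semistability forces $h^1(E)=0$, so $h^0(E)=d-n(g-1)\le d-n$ since $g\ge 3$. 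In the remaining window $2g-3<\mu\le 2g-2$, the bundle $K_C\otimes E^*$ is semistable of slope in $[0,1)$, hence $h^1(E)=h^0(K_C\otimes E^*)\le n$, and Riemann--Roch gives $h^0(E)\le d-n(g-2)\le d-n$, again using $g\ge 3$. Reorganising (ii) this way would remove the d\'evissage entirely, but it still leaves (i) resting on Re's theorem, which your proposal does not prove.
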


\begin{proof}
See \cite[Proposition 3.1 and Lemma 3.2]{ln2}. (Part (i) is contained in \cite{re}.) 
\end{proof}

\begin{lem} \label{l2.3}
Suppose that $N$ is a generated line bundle on $C$ with $h^0(N) = 2$. Then, for any bundle $E$,
$$
h^0(N \otimes E) \geq 2h^0(E) - h^0(N^* \otimes E).
$$
In particular, if $E$ is either semistable with $\mu(E) < d_N$ or stable of rank $>1$ with $\mu(E) \leq d_N$, then
$$
h^0(N \otimes E) \geq 2h^0(E).
$$
\end{lem}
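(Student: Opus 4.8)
The plan is to exploit the evaluation sequence for $N$. Since $N$ is generated with $h^0(N)=2$, the dual span sequence \eqref{eq2.1} reads
\begin{equation*}
0 \ra N^* \ra H^0(N)\otimes\cO_C \ra N \ra 0,
\end{equation*}
because $E_N$ has rank $h^0(N)-1=1$ and degree $d_N$, hence equals $N^*$ (it is the kernel of the surjection $\cO_C^2\to N$). Tensoring this exact sequence with an arbitrary bundle $E$ preserves exactness (it is a sequence of locally free sheaves, or one simply notes $N$ is locally free so $\mathcal{T}or_1(N,E)=0$), giving
\begin{equation*}
0 \ra N^*\otimes E \ra H^0(N)\otimes E \ra N\otimes E \ra 0.
\end{equation*}
Taking the long exact cohomology sequence, the connecting map shows
\begin{equation*}
h^0(N\otimes E) \geq h^0(H^0(N)\otimes E) - h^0(N^*\otimes E) = 2h^0(E) - h^0(N^*\otimes E),
\end{equation*}
which is the first assertion.

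For the ``in particular'' statement, the point is to show $h^0(N^*\otimes E)=0$ under either hypothesis. Here $N^*$ has degree $-d_N<0$, so $N^*\otimes E$ is a bundle with slope $\mu(E)-d_N$. If $E$ is semistable with $\mu(E)<d_N$, then $N^*\otimes E$ is semistable of negative slope, hence has no nonzero sections, so $h^0(N^*\otimes E)=0$ and the inequality $h^0(N\otimes E)\geq 2h^0(E)$ follows. If instead $E$ is stable of rank $>1$ with $\mu(E)\leq d_N$, then $N^*\otimes E$ is stable of rank $>1$ with slope $\leq 0$; a stable bundle of rank $>1$ and slope $\leq 0$ again has no sections (a nonzero section would give a sub-line-bundle of degree $\geq 0$, violating stability since the slope is $\leq 0$ and the rank is $>1$), so once more $h^0(N^*\otimes E)=0$.

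The computations here are entirely routine; the only point requiring a little care is the identification $E_N\cong N^*$, which is immediate from $\rk E_N=1$, $\deg E_N=d_N$ and the fact that $E_N$ is the kernel of $\cO_C^2\to N$, so that $E_N\otimes N\hookrightarrow N\otimes\cO_C^2$ with the composite to $N\oplus N$ having image the antidiagonal, forcing $E_N\otimes N\cong\cO_C$. The main (mild) obstacle is simply to state the vanishing of $h^0(N^*\otimes E)$ cleanly in the two separate cases, taking note that in the stable case one genuinely needs rank $>1$ to allow equality $\mu(E)=d_N$.
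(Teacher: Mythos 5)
Your proof is correct and follows essentially the same route as the paper: both rest on the exact sequence $0 \ra N^* \otimes E \ra H^0(N) \otimes E \ra N \otimes E \ra 0$ and the vanishing of $h^0(N^* \otimes E)$ under either hypothesis (semistable of slope $<d_N$, or stable of rank $>1$ and slope $\le d_N$). One cosmetic slip: in the dual span sequence \eqref{eq2.1} the kernel is $E_N^*$, not $E_N$, so the correct identification is $E_N^* \cong N^*$ (equivalently $E_N \cong N$); this does not affect your argument, since the sequence you actually use is the right one.
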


\begin{proof}
 We have an exact sequence
 $$
 0 \ra N^* \otimes E \ra H^0(N) \otimes E \ra N \otimes E \ra 0.
 $$
 The first assertion follows immediately from this. The second assertion follows, if we note that under the stated conditions $h^0(N^* \otimes E) = 0.$
\end{proof}

\begin{prop} \label{prop2.4}
Let $C$ be a trigonal curve of genus $g$ and $3n < d < 5n$.  
If $k \leq 2\left\lfloor n + \frac1{g}(d -4n)\right\rfloor$ and $(n,d,k)\ne(n,4n,2n)$ or $(n,4n,2n-1)$, then $B(n,d,k) \neq \emptyset$.
\end{prop}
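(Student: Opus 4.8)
The plan is to produce the desired bundles by tensoring bundles of small slope with the trigonal pencil. Since $C$ is trigonal it carries a base-point-free line bundle $T$ of degree $3$ with $h^0(T)=2$, and this is the line bundle to which Lemma \ref{l2.3} will be applied. Put $d':=d-3n$; the hypothesis $3n<d<5n$ is then equivalent to $0<d'<2n$, which is precisely the range treated in Proposition \ref{pln2}(i). So the first step is to find a stable bundle of rank $n$ and degree $d'$ with as many sections as possible, and the second is to tensor it by $T$ and bound $h^0$ from below.

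For the first step, set $k_0:=\left\lfloor n+\frac1g(d-4n)\right\rfloor=\left\lfloor n+\frac1g(d'-n)\right\rfloor$; one checks that $k_0\ge1$, and since the hypothesis reads $k\le 2k_0$ it suffices to construct a stable bundle $E$ of rank $n$ and degree $d$ with $h^0(E)\ge 2k_0$. Since $k_0-n\le\frac1g(d'-n)$, Proposition \ref{pln2}(i) provides a stable bundle $E'$ of rank $n$ and degree $d'$ with $h^0(E')\ge k_0$, except when $(n,d',k_0)=(n,n,n)$ with $n\ge2$. This exceptional situation occurs only for $d=4n$, and there Proposition \ref{pln2}(i) still gives a stable $E'$ of rank $n$ and degree $n$ with $h^0(E')\ge n-1$. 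So in all cases there is a stable $E'$ of rank $n$ and degree $d'$ with $h^0(E')\ge k_0$, except that for $d=4n$ and $n\ge2$ we obtain only $h^0(E')\ge k_0-1=n-1$; this loss of one section is exactly what forces the exclusion of the triples $(n,4n,2n)$ and $(n,4n,2n-1)$.

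For the second step, set $E:=T\otimes E'$. Then $E$ has rank $n$ and degree $d'+3n=d$, and $E$ is stable because tensoring by a line bundle preserves stability, so --- unlike the elementary-transformation constructions mentioned in the introduction --- stability here is automatic and costs nothing. Since $E'$ is semistable with $\mu(E')=d'/n<2<3=d_T$, Lemma \ref{l2.3} applied with $N=T$ yields $h^0(E)\ge 2h^0(E')$, hence $h^0(E)\ge 2k_0\ge k$ in general, and $h^0(E)\ge 2(n-1)\ge k$ in the exceptional case $d=4n$, $n\ge2$. In either case $E\in B(n,d,k)$, so $B(n,d,k)\ne\emptyset$. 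The argument is essentially routine once one commits to using the pencil $T$; the only point requiring real care is the bookkeeping around $d=4n$, where the construction drops a single section and the two excluded triples appear.
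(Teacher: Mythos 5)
Your proof is correct and follows exactly the paper's route: take a stable bundle of rank $n$ and degree $d'=d-3n$ with $h^0\ge\left\lfloor n+\frac1g(d'-n)\right\rfloor$ from Proposition \ref{pln2}(i), tensor with the trigonal bundle $T$, and apply Lemma \ref{l2.3} to double the section count, with the exceptional triple $(n,d',k')=(n,n,n)$, $n\ge2$, accounting for the two excluded cases at $d=4n$. The paper's proof is just a terser version of the same argument.
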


\begin{proof}
We know by Proposition \ref{pln2} that $B(n,d',k') \neq \emptyset$ if $0 < d'< 2n$ and $k' \leq n + \frac{1}{g}(d'-n)$, except when $(n,d',k') = (n,n,n)$ with $n \geq 2$.
Now take $N$ in Lemma \ref{l2.3} to be a trigonal bundle. The result follows from this and the fact that $B(n,d,k+1) \subset B(n,d,k)$.
\end{proof}

For curves of higher Clifford index we have a stronger version of Proposition \ref{p2.2}(i).

\begin{prop} \label{p1.6}
Suppose that $\Cl(C) \geq 2$ and $E$ is a semistable bundle on $C$ of rank $n$ and slope $\mu = \frac{d}{n}$. 
\begin{enumerate}
\item[(i)]
If $2 + \frac{2}{g-4} \leq \mu \leq 2g -4 - \frac{2}{g-4}$, then
$$
h^0(E) \leq \frac{d}{2}.
$$
\item[(ii)] 
If $1 \leq \mu \leq 2 + \frac{2}{g-4}$, then 
$$
h^0(E) \leq \frac{1}{g-2}(d -n) + n.
$$
\end{enumerate}
\end{prop}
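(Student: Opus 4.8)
The plan is to restate both inequalities as lower bounds for the Clifford index $\Cliff(E)=\mu(E)-\frac2n(h^0(E)-n)$ of the bundle, and to derive these by induction on $n$ from the hypothesis $\Cliff(C)\ge2$, used only via Clifford's theorem for line bundles. A short computation shows that (i) is equivalent to $\Cliff(E)\ge2$ and (ii) to $\Cliff(E)\ge\frac{(g-4)\mu+2}{g-2}$, the latter right-hand side increasing from $1$ at $\mu=1$ to $2$ at $\mu=2+\frac2{g-4}$. Two reductions come first. Since at a fixed slope each target bound is additive over direct sums, replacing $E$ by the graded bundle $\gr E$ of a Jordan--H\"older filtration lets me assume $E$ is \emph{stable}. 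And since $E\mapsto K_C\otimes E^*$ sends $\mu$ to $2g-2-\mu$, interchanges $h^0$ with $h^1$, and by Riemann--Roch carries the bound in (i) at slope $\mu$ to the same bound at slope $2g-2-\mu$, and since the slope interval in (i) is symmetric about $g-1$, I may assume $\mu\le g-1$ throughout (this is automatic in (ii), as $2+\frac2{g-4}\le g-1$ for $g\ge5$).

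For $n=1$ write $E=L$, a line bundle of degree $\mu$ with $1\le\mu\le g-1$. If $h^0(L)\le1$ the bounds are immediate from $\mu\ge1$ (resp. $\mu>2$ in (i)). If $h^0(L)\ge2$, then $\deg L\ge1$ and $h^1(L)=h^0(L)-\deg L+g-1\ge2$ because $\deg L\le g-1$, so $L$ contributes to the Clifford index; hence $\Cliff(C)\le\Cliff(L)=\deg L-2(h^0(L)-1)$ forces $h^0(L)\le\frac{\deg L}{2}$, and one checks that in the two slope intervals this implies exactly the bounds in (i) and (ii).

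For the inductive step take $E$ stable of rank $n\ge2$ with $\mu\le g-1$ and $h^0(E)\ge1$. A nonzero section of $E$ spans a sub-line-bundle $L=\cO_C(D)\subseteq E$ with $0\le\deg L<\mu$ (by stability), giving $0\ra L\ra E\ra E'\ra0$ with $E'$ locally free of rank $n-1$; since $E'$ is a quotient of the semistable $E$ all its Harder--Narasimhan slopes exceed $\mu$, and taking preimages of the Harder--Narasimhan pieces in $E$ shows none exceeds $2\mu\le2g-2$. One then bounds $h^0(E)\le h^0(L)+h^0(E')$ by applying Clifford's theorem to $L$, the inductive hypothesis to the Harder--Narasimhan quotients of $E'$ whose slope still lies in the relevant interval, Proposition \ref{p2.2} to those of larger slope, and Proposition \ref{pln2} to those of slope below $2$. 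The main obstacle is that this is lossy: the splitting $h^0(E)\le h^0(L)+h^0(E')$ surrenders a unit of ``Clifford slack'' for every sub- or quotient line bundle appearing in the d\'evissage that fails to contribute to the Clifford index --- those with $h^0\le1$, with $h^1\le1$, or of degree $0$ or $\ge2g-2$ --- and (via Proposition \ref{p2.2}) a further $\frac12$ of the rank for every Harder--Narasimhan piece of $E'$ whose slope has drifted past $2g-4-\frac2{g-4}$; there is also some latitude in which sub-line-bundle to peel off. The real work is the bookkeeping showing that, exactly when $\mu$ lies in $[\,2+\frac2{g-4},\,2g-4-\frac2{g-4}\,]$ in (i) and in $[\,1,\,2+\frac2{g-4}\,]$ in (ii), all these losses are absorbed; outside these ranges the estimates genuinely fail, and no cleaner uniform statement is possible since $\Cliff(E)\ge\Cliff(C)$ can be false for $n\ge2$. (Alternatively, the two bounds can be quoted from the general higher-rank Clifford-index estimates of Lange and Newstead.)
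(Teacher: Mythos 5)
Your reformulations are correct ((i) is $\Cliff(E)\ge 2$ and (ii) is $\Cliff(E)\ge\frac{(g-4)\mu+2}{g-2}$, with the right endpoint values), as are the reductions to the stable case and, via Serre duality, to $\mu\le g-1$, and the rank-one case. But the inductive step is where the entire content of the proposition lives, and you have not carried it out: you explicitly defer "the bookkeeping showing that \ldots all these losses are absorbed" without exhibiting any mechanism for the absorption. The difficulty is not cosmetic. In part (i) the target $h^0(E)\le\frac d2$ has no additive slack whatsoever, so the very first application of Clifford's theorem to a line subbundle $L$ with $h^0(L)\ge2$ already gives $h^0(L)\le\frac{\deg L}{2}+1$ and overshoots the budget by $1$; the splitting $h^0(E)\le h^0(L)+h^0(E/L)$ then yields at best $\frac d2+1$ even if every Harder--Narasimhan piece of $E/L$ satisfied the clean bound $\frac{d'}{2}$, and the pieces of slope in $(2g-4-\frac{2}{g-4},\,2g-3]$ handled by Proposition \ref{p2.2}(i) cost a further $\frac{n'}{2}$ each. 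Nothing in the d\'evissage you describe produces a compensating saving, and no choice of which line subbundle to peel off obviously repairs this. So as written the argument does not close, and the claim that the losses cancel "exactly" on the stated slope intervals is an assertion, not a proof.

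For comparison: the paper does not prove this statement at all but quotes it as \cite[Theorem 2.1]{m}, and Mercat's actual argument is not a line-by-line peeling of the kind you propose; it works with the subsheaf generated by the global sections and a dual-span/evaluation-map analysis precisely to avoid paying the $+1$ of Clifford's theorem at every stage. Your closing parenthesis (quoting the bounds from "Lange and Newstead") points at the wrong source; the result is Mercat's. If you intend a self-contained proof you must either supply the missing combinatorial argument or switch to Mercat's global method; if you intend to quote the result, cite \cite{m} and delete the sketch.
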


For the proof, see \cite[Theorem 2.1]{m}. We have stated this result in full, although only (ii) is relevant for $g=5$.

\begin{prop} \label{p1.7}
Let $C$ be a bielliptic curve and $n,d$ and $k$ positive integers.
\begin{enumerate}
\item[(i)] If $k \leq \frac{d}{2}$, then there exists a semistable bundle $E$ of rank $n$ and degree $d$ with $h^0(E) \geq k$.
\item[(ii)] If $k < \frac{d}{2}$, then there exists a stable bundle of rank $n$ and degree $d$ with 
$h^0(E) \geq k$.
\end{enumerate}
\end{prop}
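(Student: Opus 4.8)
The plan is to reduce everything to the elliptic curve underlying the bielliptic structure. Write $\pi\colon C\to Y$ for the double covering onto an elliptic curve $Y$, and recall two standard facts. First, a semistable bundle $V$ of positive degree $e$ on $Y$ has $h^1(V)=h^0(V^*)=0$, since $V^*$ is semistable of negative degree and $K_Y\cong\cO_Y$; hence $h^0(V)=e$ by Riemann--Roch, and such a $V$ exists of every rank $\ge1$ (when the rank and $e$ are not coprime, take a direct sum of copies of a stable bundle). Second, for a finite morphism of smooth curves the pullback of a semistable bundle is semistable, and $h^0(\pi^*V)\ge h^0(V)$ always (restrict sections to a fibre, or use $\pi_*\cO_C=\cO_Y\oplus\mathcal L^{-1}$ with $\deg\mathcal L>0$). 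Since $k\ge1$ and $k\le d/2$ force $d\ge2$, the integer $e:=\lfloor d/2\rfloor$ appearing below is positive.

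For (i), choose $V$ general semistable of rank $n$ and degree $e$ on $Y$, so that $W:=\pi^*V$ is semistable of rank $n$ and degree $2e$ with $h^0(W)\ge e\ge k$. If $d=2e$ we take the bundle $W$ itself. If $d=2e+1$ we take for $F$ a general positive elementary transformation of $W$ at a general point of $C$; then $F$ has rank $n$ and degree $d$, with $h^0(F)\ge h^0(W)\ge k$. In both cases the bundle is semistable, so $\widetilde B(n,d,k)\ne\emptyset$.

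For (ii) the same scheme must yield a \emph{stable} bundle, and the strict inequality $k<d/2$ lets us afford to lose one section. For $n=1$ there is nothing more to do, as the line bundle produced in (i) is automatically stable. For $n\ge2$, take $V$ general semistable of rank $n$ and degree $e+1$ on $Y$, so that $W=\pi^*V$ is semistable of rank $n$ and degree $2e+2$ with $h^0(W)\ge e+1$; let $F$ be obtained from $W$ by $2e+2-d$ (namely two if $d$ is even, one if $d$ is odd) general negative elementary transformations. Then $F$ has rank $n$ and degree $d$, and
$$h^0(F)\geq h^0(W)-(2e+2-d)\geq (e+1)-(2e+2-d)=d-e-1\geq k,$$
using $k\le d/2-1$ when $d$ is even and $k\le(d-1)/2$ when $d$ is odd; and $F$ is stable. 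Hence $B(n,d,k)\ne\emptyset$.

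The one genuinely nontrivial point is the stability of the bundles produced by the elementary transformations: a general elementary transformation need not preserve semistability of an arbitrary bundle — it fails, for example, for a direct sum of line bundles of the same degree — so one must exploit the genericity of $V$ and of the modifications. A would-be destabilising subsheaf of $F$ arises by saturating, inside one of the elementary transformations, a subsheaf $G\subset W=\pi^*V$ of some rank $s<n$ of nearly maximal slope; intersecting $G$ with its conjugate under the bielliptic involution, such a $G$ descends to a subsheaf of $V$ on $Y$ of slope close to $\mu(V)$, and for general $V$ these form a bounded family — indeed a finite one when $V$ is polystable with distinct summands — which a general elementary transformation cannot extend to a destabiliser of $F$. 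Making this dimension estimate precise for every $s$ is where the work lies, and is also where the margin afforded by $k<d/2$ in (ii) is used.
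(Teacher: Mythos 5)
Your reduction to the elliptic curve $Y$ and the bookkeeping with $e=\lfloor d/2\rfloor$ are sound, and the overall strategy (pull back a general semistable bundle from $Y$, then adjust the degree by elementary transformations) is essentially the one used in the source the paper relies on: the paper itself gives no argument for this proposition but quotes it from Mercat \cite{m}, who in turn attributes it to Ballico \cite{b}. So the approach is not wrong in spirit. The case $d$ even in (i) is complete as you state it, since $\pi^*V$ itself is semistable with $h^0\ge e\ge k$.

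However, the writeup has a genuine gap, and you name it yourself: the (semi)stability of the bundles produced by the elementary transformations is never proved. In (i) with $d$ odd, the sentence ``In both cases the bundle is semistable'' is an assertion, not a deduction --- a positive elementary transformation of a semistable bundle need not be semistable (your own example $L\oplus L$ shows that \emph{every} positive elementary transformation of it is unstable), so even part (i) is incomplete for odd $d$. In (ii) the stability of $F$ is the entire difficulty, and the closing paragraph is a sketch that does not go through as stated: a destabilising subsheaf $G\subset\pi^*V$ need not be invariant under the bielliptic involution $\sigma$, and $G\cap\sigma^*G$ (respectively $G+\sigma^*G$) has smaller (respectively larger) rank than $G$ in general --- it can even vanish --- so the claimed descent of $G$ to a bounded family of subsheaves of $V$ on $Y$ does not follow from ``intersecting with the conjugate.'' Since you explicitly defer ``making this dimension estimate precise for every $s$,'' the argument as written establishes non-emptiness only in the even-degree semistable case; the remaining cases need either the full genericity/dimension count or a direct appeal to \cite{b}.
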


This is \cite[Theorem 3.1]{m} and is due to Ballico \cite[Theorem 5.3 and Proposition 5.4]{b}.

A common method of construction is that of elementary transformations. We have in particular
\begin{prop} \label{p2.5}
 Let $C$ be a curve of genus $g\ge2$ and $L_1, \dots, L_n$ line bundles of degree $d$ on $C$ with $L_i \not \simeq L_j$ for $i \neq j$ and let $t > 0$. Then
 \begin{enumerate}
  \item[(i)] there exist stable bundles $E$ fitting into an exact sequence
  $$
  0 \ra L_1 \oplus \cdots \oplus L_n \ra E \ra \tau \ra 0
  $$
  where $\tau$ is a torsion sheaf of length $t$;
  \item[(ii)] there exist stable bundles $E$ fitting into an exact sequence
  $$
  0 \ra E \ra L_1 \oplus \cdots \oplus L_n \ra \tau \ra 0
  $$
  with $\tau$ as above.
 \end{enumerate}
\end{prop}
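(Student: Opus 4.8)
The plan is to treat the two parts essentially in parallel, since dualizing converts one into the other up to a twist. For (i), I would start with the direct sum $F=L_1\oplus\cdots\oplus L_n$ and consider all elementary upward transformations $0\to F\to E\to\tau\to0$ with $\tau$ a torsion sheaf of length $t$; these are parametrized (once we fix the support, which we may take to be $t$ distinct general points) by a choice, at each point $p$, of a one-dimensional quotient of the fibre $F_p\cong\CC^n$, i.e.\ by a point of $(\PP^{n-1})^t$, and conversely every such choice yields a locally free extension $E$ of rank $n$ and degree $nd+t$. So the family of such $E$ is nonempty and has a natural parameter space; what remains is to show that a general member is stable.

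For the stability step, suppose $E$ is a general elementary transformation and let $G\subset E$ be a subbundle with $\mu(G)\ge\mu(E)=\frac{nd+t}{n}$; I want a contradiction for general choices. Intersecting with $F$ gives $G\cap F\subset F$, and $F$ is a direct sum of line bundles all of the same slope $d$, so any subsheaf $H$ of $F$ of rank $r$ has $d_H\le rd$, with the extra information that $H$ is forced into a direct sum of at most $r$ of the $L_i$ only when $d_H=rd$ exactly (here the hypothesis $L_i\not\simeq L_j$ is what prevents a rank-$r$ subsheaf of degree $rd$ from sitting diagonally). The quotient $E/(G\cap F)$ maps to $E/F=\tau$, so the ``amount of $\tau$'' that $G$ can absorb is controlled, and by choosing the $t$ quotient lines in $(\PP^{n-1})^t$ generically — in particular so that no proper coordinate subspace coming from a subsum $\bigoplus_{i\in S}L_i$ is respected at too many of the points — one forces $d_G<\mu(E)\cdot r_G$. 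Carrying this out is essentially the standard genericity argument for elementary transformations; the genus hypothesis $g\ge2$ enters to guarantee there are enough points and enough moduli.

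Part (ii) follows by the same mechanism applied downward: elementary modifications $0\to E\to F\to\tau\to0$ are given at each point by a choice of a hyperplane in $F_p$, again a $(\PP^{n-1})^t$ worth of choices, and a subbundle $G\subset E$ of large slope would give a subbundle of $F$ of large slope, to which the same estimate on subsheaves of $\bigoplus L_i$ applies. Alternatively one can deduce (ii) from (i) by dualizing: if $0\to L_1^{-1}\oplus\cdots\oplus L_n^{-1}\to E'\to\tau'\to0$ is a stable upward transformation of the $L_i^{-1}$, then $0\to (E')^*\to L_1\oplus\cdots\oplus L_n\to \tau\to 0$ exhibits $(E')^*$ as the required stable bundle, since dualizing preserves stability.

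The main obstacle is precisely the stability argument, and within it the crux is handling subbundles $G$ whose intersection with $F$ has maximal degree $r_Gd$: for these the slope inequality is tight on the $F$-side, so stability must be won entirely from the torsion part, and this is exactly where one needs the hypothesis $L_i\not\simeq L_j$ together with a sufficiently generic choice of the defining quotients (resp.\ subspaces) so that no nontrivial subsum $\bigoplus_{i\in S}L_i$ is compatible with the transformation at enough points. I would isolate this as a lemma about subsheaves of a direct sum of pairwise non-isomorphic line bundles of equal degree and then feed it into a dimension count on the parameter space $(\PP^{n-1})^t$.
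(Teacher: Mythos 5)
The paper does not actually prove part (i): it quotes Mercat's Th\'eor\`eme A.5 \cite{m3} (with \cite{te} for the general curve), and deduces (ii) from (i) by replacing $L_i$ with $K_C\otimes L_i$ and using Serre duality --- essentially the dualization you offer as your alternative route for (ii), which is correct. Your attempt to give a self-contained proof of (i) sets up the right parameter space ($(\PP^{n-1})^t$ for transformations supported at $t$ general points) and correctly identifies where the hypothesis $L_i\not\simeq L_j$ enters: since $\Hom(L_i,L_j)=0$ for $i\ne j$, the only rank-$r$ subsheaves of $F=\bigoplus L_i$ of the maximal degree $rd$ are the finitely many partial direct sums, and for each of these the condition of absorbing even one unit of torsion is a proper closed condition on the parameters.

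There is, however, a genuine gap, and it sits exactly where you claim the difficulty is concentrated. You assert that the crux is the subbundles $G$ with $d_{G\cap F}=rd$; in fact those are the \emph{easy} case (finitely many candidates, each excluded by an open dense condition). The case your sketch dismisses, $d_{G\cap F}=rd-s$ with $s>0$, is the hard one: the estimate $d_G\le d_{G\cap F}+t$ only gives $d_G\le rd-s+t$, which reaches the destabilizing threshold $r\mu(E)=rd+\frac{rt}{n}$ whenever $t(n-r)\ge sn$, so such subsheaves can perfectly well destabilize particular transformations $E$. Excluding them for generic parameters requires a genuine dimension count: one must bound the dimension of the family of saturated rank-$r$, degree-$(rd-s)$ subsheaves of $\bigoplus L_i$ (these form positive-dimensional families in general) and show it is strictly smaller than the codimension $\ell(n-r)$ of the condition that a fixed such subsheaf absorb $\ell\ge s+\frac{rt}{n}$ units of torsion, uniformly in $r$, $s$ and $\ell$. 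This count --- which is also where $g\ge2$ is genuinely used, since it controls $h^0$ of the relevant small-degree twists and hence the size of these families (and the statement does fail for $g\le1$, e.g.\ for $n=2$ and $t$ even on an elliptic curve) --- is precisely the content of Mercat's Th\'eor\`eme A.5. Your proposal defers it to ``the standard genericity argument'' without carrying it out, so as written the proof is incomplete, and the lemma you propose to isolate addresses only the part of the argument that is already routine.
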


\begin{proof}
(i) is a particular case of \cite[Th\'eor\`eme A.5]{m3}. (ii) can be deduced by replacing each $L_i$ by $K_C \otimes L_i$ and using Serre duality.
(For a general curve, this is proved in \cite{te}.)
\end{proof}

Finally, we have the following simple lemma.

\begin{lem}\label{lbb}
If $\widetilde B(n,d,k)\ne\emptyset$, then $B(n',d',k')\ne\emptyset$ for some $(n',d',k')$ with $n' \leq n,\, \frac{d'}{n'}=\frac{d}{n}$ and $\frac{k'}{n'}\ge\frac{k}{n}$.
\end{lem}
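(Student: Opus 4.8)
The plan is to take a point of $\widetilde B(n,d,k)$, pass to its associated graded object, and extract a single stable Jordan--H\"older factor by a pigeonhole argument. No new geometric input is needed; everything is forced by the definitions.

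First I would choose $[E]\in\widetilde B(n,d,k)$, so that $E$ is semistable of rank $n$ and degree $d$ with $h^0(\gr E)\ge k$. Writing $\gr E=E_1\oplus\cdots\oplus E_r$ for the graded object of a Jordan--H\"older filtration of $E$, each $E_i$ is \emph{stable} with $\mu(E_i)=\mu(E)=\frac{d}{n}$, and moreover $\sum_i n_{E_i}=n$ while $\sum_i h^0(E_i)=h^0(\gr E)\ge k$.

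Next I would apply the pigeonhole principle: since $\sum_i h^0(E_i)\ge k=\frac{k}{n}\sum_i n_{E_i}$, there must exist an index $i$ with $h^0(E_i)\ge\frac{k}{n}\,n_{E_i}$. Set $n'=n_{E_i}$, $d'=d_{E_i}$ and $k'=h^0(E_i)$. Then $n'\le n$, $\frac{d'}{n'}=\mu(E_i)=\frac{d}{n}$, and $\frac{k'}{n'}\ge\frac{k}{n}$; in particular $k'\ge\frac{k}{n}n'>0$, so $k'\ge1$ and the locus $B(n',d',k')$ is defined. Finally, $E_i$ is stable of rank $n'$ and degree $d'$ with $h^0(E_i)=k'$, so $E_i\in B(n',d',k')$, which gives the required non-emptiness.

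There is essentially no serious obstacle; the only point requiring a moment's care is that the Jordan--H\"older factors are genuinely stable (not merely semistable) and all share the slope $\frac{d}{n}$ — this is exactly what ensures that the extracted triple satisfies $\frac{d'}{n'}=\frac{d}{n}$ and that $E_i$ lies in $M(n',d')$, so that we land in $B(n',d',k')$ rather than only in $\widetilde B(n',d',k')$.
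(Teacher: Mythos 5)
Your proof is correct and is exactly the argument the paper intends: the paper's one-line proof ("at least one factor in any Jordan--H\"older filtration of $E$ must belong to some $B(n',d',k')$ as specified") is precisely your pigeonhole argument on the stable, equal-slope Jordan--H\"older factors, which you have simply written out in full. No further comment is needed.
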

\begin{proof} Let $E$ be a semistable bundle of type $(n,d,k)$. At least one factor in any Jordan-H\"older filtration of $E$ must belong to some $B(n',d',k')$ as specified in the statement.
\end{proof}

\section{Non-hyperelliptic curves of genus $5$}\label{nonh}

In view of the facts cited in section \ref{back}, we need to consider only the case $2 < \mu \leq 4$.

\begin{lem} \label{l5.1}
Let $C$ be a non-hyperelliptic curve of genus $5$ and $L = K_C(-p)$ for some $p \in C$.
Then $L$ is generated, $E_L$ is stable of rank $3$ and degree $7$ and
$$
h^0(E_L) = 4.
$$
\end{lem}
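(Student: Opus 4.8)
The plan is to work through the three assertions in turn, starting from the basic properties of $K_C$ on a non-hyperelliptic curve of genus $5$. First, since $C$ is non-hyperelliptic, $K_C$ is very ample, hence generated; and for any point $p$, $h^0(K_C(-p)) = h^0(K_C) - 1 = 4$ because $K_C$ is not special at $p$ — more precisely, because $K_C$ is very ample we have $h^0(K_C(-p)) = g - 1 = 4$. To see that $L = K_C(-p)$ is itself generated, one argues that $L$ fails to be generated at a point $q$ only if $h^0(K_C(-p-q)) = h^0(K_C(-p)) = 4$, i.e.\ $h^0(K_C(-p-q)) \ge 4$; but a very ample $K_C$ separates points, so $h^0(K_C(-p-q)) = 3$ for all $q$ (including $q = p$, using that $K_C$ separates tangent vectors). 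Hence $L$ is generated.

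Next, the dual span construction \eqref{eq2.1} applied to $L$ gives an exact sequence $0 \to E_L^* \to H^0(L)\otimes\cO_C \to L \to 0$, so $E_L$ has rank $h^0(L) - 1 = 3$ and degree $d_L = 2g - 2 - 1 = 7$, and $h^0(E_L) \ge h^0(L) = 4$. For the reverse inequality $h^0(E_L) \le 4$, I would dualize: sections of $E_L$ correspond to maps $L \to $ (something), or more directly use that $E_L^* \hookrightarrow H^0(L)\otimes\cO_C$ forces $h^0(E_L^*) = 0$ and then relate $h^0(E_L)$ to the sequence. In fact a clean route is to note $E_L = D(L)$ sits in $0 \to E_L^* \to H^0(L)\otimes\cO_C \to L \to 0$; tensoring the dual sequence $0 \to L^* \to H^0(L)^*\otimes\cO_C \to E_L \to 0$ and taking cohomology gives $h^0(E_L) = h^0(L)^* \cdot 1 - h^0(L^*) + (\text{correction from } h^1) = 4$ once one checks $h^1$ of the relevant term vanishes; alternatively observe $L^* = \cO_C(p)(-K_C)$ has negative degree so $h^0(L^*)=0$, and $h^1(L^*) = h^0(K_C\otimes L \otimes \cO_C(-p))^{\vee}$... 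I would instead simply cite that $h^0(D(L)) = h^0(L)$ whenever $L$ is generated with $h^1(L) \le 1$, which holds here since $h^1(L) = h^0(\cO_C(p)) = 1$.

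The main obstacle is the stability of $E_L$. I expect this to be the substantive part of the proof. The standard approach: suppose $E_L$ is not stable and let $F \subsetneq E_L$ be a subsheaf with $\mu(F) \ge \mu(E_L) = 7/3$, which we may take to be a subbundle with $F$ semistable and $n_F \in \{1,2\}$. Dualizing, $E_L^* \twoheadrightarrow F^*$ with $\mu(F^*) \le -7/3$, and $E_L^* \subset H^0(L)\otimes\cO_C$, so $F^*$ is a quotient of a trivial bundle, hence generated, hence $d_{F^*} \ge 0$ — immediately contradicting $\mu(F^*) < 0$ unless we are more careful about sub\emph{sheaves} versus subbundles. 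The correct finish is to pass to the saturation and use that a generated bundle has nonnegative degree on every quotient line/vector bundle; combined with $\mu(F^*) \le -7/3 < 0$ this is the contradiction. One must handle the borderline (semistability) case $\mu(F) = 7/3$ separately, but this cannot occur since $7/3$ is not an integer over $n_F \le 2$ — actually $d_F/n_F = 7/3$ needs $n_F = 3$, so for $n_F \in\{1,2\}$ any destabilizing $F$ has $\mu(F) > 7/3$ strictly, i.e.\ $\mu(F^*) < 0$ strictly, and the generatedness of $E_L^*$... here one must be slightly careful, as $E_L^*$ itself need not be generated, only a subsheaf of a trivial bundle. The resolution is that any quotient \emph{bundle} of $\cO_C^{\oplus 4}$ has degree $\ge 0$, and $F^*$ maps to such a quotient bundle with the same or larger degree; chasing this through yields the contradiction and completes the proof.
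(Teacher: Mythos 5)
You handle the generatedness of $L$, the rank and degree of $E_L$ and the lower bound $h^0(E_L)\ge4$ correctly, but the two substantive claims both rest on arguments that fail. For the upper bound you cite the ``fact'' that $h^0(D(L))=h^0(L)$ whenever $L$ is generated with $h^1(L)\le1$. There is no such fact. Dualizing \eqref{eq2.1} and taking cohomology (using $h^0(L^*)=0$) gives
$$
h^0(E_L)=h^0(L)+\dim\coker\bigl(H^0(L)\otimes H^0(K_C)\ra H^0(K_C\otimes L)\bigr),
$$
so the desired equality is precisely the surjectivity of this multiplication map, a genuinely geometric condition that does not follow from $h^1(L)\le1$: for instance $L=K_C$ on a hyperelliptic curve is generated with $h^1=1$, yet the Noether map is not surjective and $h^0(D(K_C))=2g-2>g$ for $g\ge3$. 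In the case at hand surjectivity can in fact be extracted from Noether's theorem for non-hyperelliptic curves (write $H^0(K_C)=H^0(K_C(-p))\oplus\langle s\rangle$ with $s(p)\ne0$ and compare the image of the multiplication map with $s\cdot H^0(K_C(-p))$ inside $H^0(K_C^2)$), but that argument has to be supplied. The paper proceeds quite differently, splitting into cases: for $\Cliff(C)=2$ it applies the Clifford-type bound of Proposition \ref{p1.6}(ii), and for $C$ trigonal it constructs a surjection $E_L\ra T$ and bounds $h^0$ of the rank-$2$ kernel via Proposition \ref{pln2}(ii).

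The stability argument is also not repairable as stated. A destabilizing subbundle $F\subset E_L$ gives a quotient bundle $F^*$ of $E_L^*$; but $E_L^*$ is a \emph{sub}bundle of $H^0(L)\otimes\cO_C$, so its quotients are not quotients of a trivial bundle and need not have non-negative degree --- $E_L^*$ is a quotient of itself of degree $-7$. Writing $M=\ker(E_L^*\ra F^*)$, what one actually obtains is an inclusion $F^*\hra (H^0(L)\otimes\cO_C)/M$, i.e.\ $F^*$ is a subsheaf, not a quotient, of a generated bundle, and no degree bound follows formally from this. Indeed, were your argument valid it would prove $D(L)$ stable for every generated line bundle on every curve, which is false: $D(K_C)$ on a hyperelliptic curve is a direct sum of copies of the hyperelliptic pencil, hence only strictly semistable. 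Stability of $E_L$ genuinely uses non-hyperellipticity and the geometry of the curve (one must bound the degree of the generated quotient $(H^0(L)\otimes\cO_C)/M$ using gonality or the Clifford index); the paper obtains it by citing \cite[Lemma 3.7]{ln2}.
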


\begin{proof}
$L$ is generated since $C$ is non-hyperelliptic; moreover $h^0(L) = 4$ by Riemann-Roch. Hence $E_L$ has rank $3$ and degree $7$; moreover $E_L$ is stable by \cite[Lemma 3.7]{ln2}.
The fact that $h^0(E_L) \geq 4$ follows from dualizing \eqref{eq2.1}. If $\Cliff(C) = 2$, then $h^0(E_L) \leq 4$ by Proposition \ref{p1.6} (ii). Suppose therefore that 
$C$ is trigonal with trigonal bundle $T$. Since $h^0(T) = 2$, Serre duality and Riemann-Roch give $h^0(K_C \otimes T^*) = 3$. Hence $h^0(L \otimes T^*) \geq 2$ 
and there exist non-zero homomorphisms $T \ra L$. Thus we obtain a non-zero homomorphism $D(L) \ra D(T)$, i.e. $E_L \ra T$.
Since $E_L$ is stable, this must be surjective and we have an exact sequence
$$
0 \ra F \ra E_L \ra T \ra 0.
$$
The rank-2 bundle $F$ is semistable, since a line subbundle of $F$ of degree $\geq 3$ would contradict the stability of $E_L$.
But now $h^0(F) \leq \frac{5}{2}$ by Proposition \ref{pln2}(ii), which implies that $h^0(E_L)\le4$.
\end{proof}

The following is the case $g=5$ of \cite[Lemma 3.7(2)]{ln2}.

\begin{lem}\label{lel}
Let $C$ be a non-hyperelliptic curve of genus $5$ and $L=K_C(-p)$ for some $p\in C$. Suppose that $E$ is a bundle of rank $n$ and degree $d$ with $h^1(E\otimes L)=0$ and $h^0(E)>n+\frac14(d-n)$. Then $h^0(E_L^*\otimes E)>0$.
\end{lem}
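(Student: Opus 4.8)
The plan is to tensor the evaluation sequence \eqref{eq2.1} defining $E_L$ by $E$, obtaining
$$
0 \ra E_L^* \otimes E \ra H^0(L) \otimes E \ra L \otimes E \ra 0,
$$
and to read off from the associated long exact cohomology sequence the inequality
$$
h^0(E_L^* \otimes E) \;\geq\; h^0(L)\, h^0(E) - h^0(L \otimes E).
$$
So it suffices to show that $h^0(L)\, h^0(E) > h^0(L \otimes E)$, i.e. that the multiplication map $H^0(L) \otimes H^0(E) \ra H^0(L \otimes E)$ cannot be injective.

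To make the two terms on the right explicit I would first record that $h^0(L) = 4$: since $\deg L = 2g - 3 = 7$ and $h^1(L) = h^0(\cO_C(p)) = 1$, Riemann-Roch gives $h^0(L) = 4$ (this is already noted in the proof of Lemma \ref{l5.1}). Next, the hypothesis $h^1(E \otimes L) = 0$ is used to pin down $h^0(E \otimes L)$ exactly: by Riemann-Roch
$$
h^0(E \otimes L) = \chi(E \otimes L) = \deg E + n \deg L + n(1 - g) = d + 7n - 4n = d + 3n.
$$
Substituting, $h^0(E_L^* \otimes E) \geq 4\,h^0(E) - (d + 3n)$, and the hypothesis $h^0(E) > n + \tfrac14(d - n)$ rearranges precisely to $4\,h^0(E) > 4n + (d - n) = d + 3n$, so the right-hand side is strictly positive. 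This completes the argument.

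There is essentially no obstacle: the content of the lemma is exactly the observation that the threshold $h^0(E) > n + \tfrac14(d-n)$ is the numerical point at which $H^0(L) \otimes H^0(E) \ra H^0(L \otimes E)$ is forced to have nontrivial kernel. The one point requiring care is the role of the vanishing $h^1(E \otimes L) = 0$: without it Riemann-Roch only yields $h^0(E \otimes L) \geq d + 3n$, which is the wrong direction, so the hypothesis is genuinely needed to convert $\chi$ into an equality for $h^0(E\otimes L)$.
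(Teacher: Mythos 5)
Your argument is correct: tensoring the evaluation sequence \eqref{eq2.1} by $E$ identifies $H^0(E_L^*\otimes E)$ with the kernel of the multiplication map $H^0(L)\otimes H^0(E)\to H^0(L\otimes E)$, and the dimension count $4h^0(E) > d+3n = h^0(E\otimes L)$ (the equality using exactly the hypothesis $h^1(E\otimes L)=0$) forces this kernel to be nonzero. The paper itself gives no proof here, deferring to \cite[Lemma 3.7(2)]{ln2}, but this is precisely the standard dual-span computation used there and repeatedly elsewhere in the paper (e.g.\ in Lemma \ref{l5.4} and Proposition \ref{prop4.6}), so your route is essentially the intended one.
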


The following proposition incorporates the case $g=5$ of \cite[Lemma 3.8]{ln2}.

\begin{prop} \label{p3.2}
Let $C$ be a non-hyperelliptic curve of genus $5$ and $L = K_C(-p)$ for some $p \in C$.
Let $E$ be a semistable bundle of rank $n$ and degree $d$ with slope $\mu > 2$.  
Suppose that
$$
h^0(E) > n + \frac{1}{4}(d-n).
$$
Then
\begin{enumerate}
 \item[(i)] $h^0(E_L^* \otimes E) > 0$;
 \item[(ii)] $\mu > \frac{7}{3}$;
 \item[(iii)] if $\mu < \frac{5}{2},\; E_L$ can be embedded as a subbundle in $E$.
\end{enumerate}
\end{prop}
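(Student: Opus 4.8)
The plan is to deduce all three parts from Lemma \ref{lel}, after first checking that its hypothesis $h^1(E \otimes L) = 0$ holds under our assumptions. Since $\mu > 2$ and $L = K_C(-p)$ has degree $7$, the bundle $E \otimes L$ has slope $\mu(E) + 7 > 9 = 2g-1$; a standard semistability argument shows that a semistable bundle of slope $> 2g-1$ has vanishing $H^1$ (dualize and use that $K_C \otimes E^* \otimes L^*$ is semistable of negative slope, hence has no sections). With $h^1(E \otimes L) = 0$ in hand, Lemma \ref{lel} applies directly and gives $h^0(E_L^* \otimes E) > 0$, which is (i).

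For (ii) and (iii), I would exploit the nonzero homomorphism $\varphi\colon E_L \to E$ produced by (i), where by Lemma \ref{l5.1} $E_L$ is stable of rank $3$, degree $7$ and $h^0(E_L) = 4$. Since $E$ is semistable, a nonzero map from the stable bundle $E_L$ forces $\mu(E) \geq \mu(\IM \varphi) \geq \mu(E_L) = \frac73$, but one must rule out equality: if $\mu(E) = \frac73$, then the image of $\varphi$ is a semistable subsheaf of slope $\frac73$ in a semistable bundle of the same slope, and (after saturating) one gets a quotient of $E_L$ that is also semistable of slope $\frac73$; since $E_L$ is stable, the only such quotient is $E_L$ itself, so $E_L \hookrightarrow E$ as a subsheaf of the same slope. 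Then, using $h^0(E_L) = 4$ and semistability, one derives a contradiction with the hypothesis $h^0(E) > n + \frac14(d-n) = n + \frac14 \cdot \frac{4n}{3} = \frac{4n}{3}$ — essentially because the quotient $E/E_L$ is semistable of slope $\frac73$ on a genus $5$ curve and its sections are controlled by Proposition \ref{p2.2}(i), forcing $h^0(E) \leq 4 + \frac12(d_{E/E_L} + n_{E/E_L})$, which is too small. This yields the strict inequality $\mu > \frac73$, proving (ii).

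For (iii), assume now $\frac73 < \mu < \frac52$. The image of $\varphi$ is a subsheaf $F \subseteq E$ with $\mu(F) \geq \frac73$; since $F$ is a quotient of the stable bundle $E_L$, either $F \cong E_L$ or $F$ has rank $1$ or $2$ with $\mu(F) > \frac73$. In the latter cases $F$ would be a subsheaf of $E$ of slope $> \frac73$, hence (taking its saturation) $E$ would contain a semistable subbundle of rank $\leq 2$ and slope $> \frac73$; combined with semistability of $E$ and $\mu(E) < \frac52$, this is a tight numerical constraint, and I expect it to be incompatible with $h^0(E) > \frac43 n$ once one invokes Proposition \ref{p2.2}(i) on both the subbundle and the quotient. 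So $F \cong E_L$, i.e. $\varphi$ is injective as a sheaf map; finally, the saturation of $E_L$ in $E$ has slope $\geq \frac73 > \frac73 - \frac{1}{3n} \geq \mu(E_L^{\mathrm{sat}})$ only if the quotient is torsion-free of nonnegative degree, and a short degree count using $\mu(E) < \frac52$ forces the saturation to equal $E_L$ itself — so $E_L$ is already a subbundle.

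The main obstacle is the borderline slope analysis at $\mu = \frac73$ in part (ii) and, similarly, ruling out the low-rank image in part (iii): in both cases one is comparing $h^0$ of $E$ against $h^0$ contributions from a destabilizing-type subsheaf and its quotient, and the inequalities are nearly sharp, so the bookkeeping with Proposition \ref{p2.2}(i) (and possibly Lemma \ref{l2.3} for trigonal $C$) must be done carefully. Everything else is a routine combination of stability, Riemann–Roch, and the already-established Lemmas \ref{l5.1} and \ref{lel}.
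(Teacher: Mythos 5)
Your treatment of part (i) and the derivation of $\mu\ge\frac73$ match the paper exactly. The genuine gap is in your elimination of the borderline case $\mu=\frac73$ in part (ii). You propose to bound $h^0(E)\le h^0(E_L)+h^0(E/E_L)\le 4+\frac12\bigl(d_{E/E_L}+n_{E/E_L}\bigr)$ via Proposition \ref{p2.2}(i) and claim this is ``too small''. It is not: with $d=\frac{7n}{3}$ one has $n_{E/E_L}=n-3$, $d_{E/E_L}=\frac{7(n-3)}{3}$, so the right-hand side equals $4+\frac{5(n-3)}{3}=\frac{5n-3}{3}$, which exceeds the hypothesised lower bound $\frac{4n}{3}$ as soon as $n>3$ (e.g.\ for $n=6$ you get $h^0(E)\le 9$ against $h^0(E)>8$ --- no contradiction). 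The paper closes this case differently: since $\frac73<\frac52$, part (iii) applies and $E_L$ embeds as a subbundle; the quotient $E/E_L$ is again semistable of slope $\frac73$ and inherits the hypothesis $h^0>n'+\frac14(d'-n')$, so by induction on the rank every Jordan--H\"older factor of $E$ is isomorphic to $E_L$, whence $h^0(E)\le\frac{4n}{3}$, the desired contradiction. Some such iteration (or an equally sharp substitute) is needed; a single application of Proposition \ref{p2.2}(i) to the quotient cannot work.

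A secondary, smaller issue is in part (iii): your exclusion of a rank-$1$ or rank-$2$ image of $\varphi\colon E_L\to E$ is left at the level of ``I expect it to be incompatible'' with an $h^0$ count. No section count is needed there: a proper quotient of the stable bundle $E_L$ of rank $1$ (resp.\ $2$) has slope $>\frac73$, hence degree $\ge 3$ (resp.\ $\ge 5$), while a subsheaf of the same rank of the semistable bundle $E$ with $\mu(E)<\frac52$ has degree $\le 2$ (resp.\ $\le 4$); this is an immediate contradiction. Your concluding saturation argument for why $E_L$ is actually a subbundle (degree of the saturation is $\ge 7$ and $\le 3\mu<\frac{15}{2}$, hence $=7$) is correct in substance once the image is known to have rank $3$.
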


\begin{proof}
Since $E\otimes L$ is semistable of slope $>2g-1$, we have $h^1(E\otimes L)=0$. The assertion (i) now follows from Lemma \ref{lel}. The inequality $\mu\ge\frac73$ and (iii) follow from Lemma \ref{l5.1}.
When $\mu = \frac{7}{3}$, (iii) implies that $E_L$ can be embedded as a subbundle of 
$E$. Hence $E/E_L$ satisfies the hypotheses of the proposition and so by induction every factor of the Jordan-H\"older filtration of $E$ is isomorphic to $E_L$. Since 
$h^0(E_L) = 4$ by Lemma \ref{l5.1}, this contradicts the hypothesis.
\end{proof}

\begin{prop} \label{p3.4}
Let $C$ be a non-hyperelliptic curve of genus $5$. 
Suppose that $k = n + \frac{1}{4}(d-n)$. Suppose further that $2 < \frac{d}{n} \leq \frac{7}{3}$. 
If $B(n,d,k) \neq \emptyset$, then $(n,d,k) = (3,7,4)$. Moreover,
\begin{equation} \label{eq3.1}
B(3,7,4) = \{ E_L \;|\; L = K_C(-p) \; \mbox{for some} \; p \in C \}. 
\end{equation}
\end{prop}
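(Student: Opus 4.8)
The plan is to pin down $h^0(E)$ exactly, reduce to constructing a nonzero map from a dual span, and produce that map from the syzygy bundle of $E$.

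Since $\mu(E)>2$ and $\mu(E)\le\frac73$, Proposition \ref{p3.2}(ii) shows $h^0(E)>n+\frac14(d-n)$ is impossible, so together with $h^0(E)\ge k$ we get $h^0(E)=k=n+\frac14(d-n)$; in particular $4\mid(d-n)$. I would then reduce everything to the claim that \emph{for $E$ as above there is a nonzero homomorphism $\psi\colon E_{K_C(-q)}\to E$ for some $q\in C$}. Granting this: the saturation of $\IM\psi$ in $E$ has slope $\ge\mu(E_{K_C(-q)})=\frac73\ge\mu(E)$ (Lemma \ref{l5.1}), so stability of $E$ forces the saturation to be all of $E$ and $\mu(E)=\frac73$; as $E_{K_C(-q)}$ has rank $3$, this is impossible if $n>3$, hence $n=3$, $d=7$, $k=4$, and (see below) the construction will in fact give $E\cong E_{K_C(-q)}$. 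Conversely every $E_{K_C(-p)}$ lies in $B(3,7,4)$ by Lemma \ref{l5.1}.

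To build $\psi$ I would first pass to the subbundle $E'\subseteq E$ generated by $H^0(E)$: then $H^0(E')=H^0(E)$, $E'$ is globally generated, and every line subbundle of $E'$ has degree $\le 2$ (stability of $E$, $\mu(E)<3$). Bounding $h^0(E')$ via the Clifford-type inequalities of Propositions \ref{pln2}, \ref{p2.2} and \ref{p3.2}(ii) applied to the Harder--Narasimhan factors of $E'$ (all semistable of slope $<\mu(E)\le\frac73$) gives $h^0(E')<k$ unless $E'=E$, so $E$ is globally generated. From $0\ra M\ra H^0(E)\otimes\cO_C\ra E\ra 0$ we get $M\subseteq H^0(E)\otimes\cO_C$ of rank $\frac14(d-n)$, degree $-d$, with $h^0(M)=0$; tensoring by $K_C$ and taking cohomology identifies $\Hom(E_{K_C(-q)},E)\cong H^0(K_C(-q)\otimes M)=\Hom(K_C^{-1}(q),M)$, so it is enough to find a sub-line-bundle $K_C^{-1}(q)\hookrightarrow M$, i.e.\ an effective degree-$1$ line subbundle of $K_C\otimes M$. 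Now $K_C\otimes M$ has rank $\frac14(d-n)$, degree $d-2n>0$, slope $\le 1$ (equality iff $\mu(E)=\frac73$), while Riemann--Roch together with $H^0(E)^*\hookrightarrow H^0(M^*)$ gives $h^0(K_C\otimes M)=h^1(M^*)\ge\rk(M)$. If $K_C\otimes M$ is semistable, then being of slope $\le 1$ with at least $\rk$ sections forces, by Proposition \ref{pln2}(i), its slope to equal $1$ (so $\mu(E)=\frac73$) and every Jordan--H\"older factor to be an effective line bundle $\cO_C(q_i)$; the first such factor gives $\cO_C(q_1)\hookrightarrow K_C\otimes M$ and hence $\psi$. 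When $n=3$, $M$ has rank $1$, so $K_C\otimes M$ is automatically a line bundle, necessarily $\cO_C(q)$; then $M=K_C^{-1}(q)$, $M^*=K_C(-q)$ is a generated line bundle with $h^0=4$, and dualizing the evaluation sequence yields $E=D(M^*)=E_{K_C(-q)}$.

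The main obstacle is that $M$, and hence $K_C\otimes M$, need not be semistable (kernel/syzygy bundles of a stable bundle generally are not), so Proposition \ref{pln2}(i) is not directly applicable; one must bound $h^0$ of the Harder--Narasimhan factors of $K_C\otimes M$ by hand, using crucially that $M$ sits inside a trivial bundle (so its Harder--Narasimhan slopes are $\le 0$, and those of $K_C\otimes M$ are $\le 2g-2=8$) together with the narrow window $2<\mu(E)\le\frac73$, in order to extract an effective line subbundle of positive degree in $K_C\otimes M$ anyway. This same analysis is also what rules out $\mu(E)<\frac73$, where no homomorphism $E_{K_C(-q)}\to E$ exists and the contradiction must come entirely from the instability estimates; carrying it out is the technical core of the proof.
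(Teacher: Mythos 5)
Your overall skeleton is sound, and your endgame (produce a nonzero $\psi\colon E_{K_C(-q)}\ra E$, then use stability of both sides to force $n=3$, $d=7$ and $\psi$ an isomorphism) is correct and in fact more direct than the paper's, which instead dualizes the inclusion of a line subbundle of $K_C\otimes M$ and runs a case analysis on the dimension of an auxiliary space of sections $V$. Your identification $\Hom(E_{K_C(-q)},E)\cong H^0(K_C(-q)\otimes M)$ is also correct (both sides are the kernel of the multiplication map $H^0(K_C(-q))\otimes H^0(E)\ra H^0(K_C(-q)\otimes E)$). However, as submitted the argument has two genuine gaps. First, the step you yourself label ``the technical core'' --- extracting a nonzero map $\cO_C(q)\ra K_C\otimes M$ --- is not carried out, and your proposed route through semistability of $K_C\otimes M$ and Proposition \ref{pln2}(i) does not apply, as you concede. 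What you are missing is that no Harder--Narasimhan analysis is needed at all: $K_C\otimes M$ has rank $r=k-n$, $h^0\ge r$ and degree $d-2n>0$, and \emph{any} bundle of rank $r$ with $r$ independent sections which is not trivial has a section vanishing at some point $q$ (if no nonzero section vanished anywhere, evaluation would embed $\cO_C^{\,r}$ fibrewise injectively, forcing $K_C\otimes M\cong\cO_C^{\,r}$, contradicting positive degree). That section lies in $H^0(K_C(-q)\otimes M)$ and gives $\psi$. This is exactly the observation the paper invokes (``any such bundle necessarily has a section with a zero''). It also disposes of your worry about the range $\mu(E)<\frac73$: the map $\psi$ exists there too, and its existence is itself the contradiction, since the saturation of $\IM\psi$ would have slope $\ge\frac73>\mu(E)$.

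Second, your proof that $E$ is globally generated is only gestured at: bounding $h^0$ of the Harder--Narasimhan factors of the subsheaf $E'$ generated by sections is not routine (the available Clifford-type bounds are not obviously superadditive across factors of different slopes, and $E'$ could have full rank and smaller degree, where your slope control says little). The paper handles this differently and more cleanly: if $E$ is not generated there is an elementary transformation $0\ra F\ra E\ra\CC_q\ra 0$ with $h^0(F)=k>n+\frac14(d_F-n)$ and $h^1(F\otimes L)=0$, so Lemma \ref{lel} gives a nonzero map $E_L\ra F\subseteq E$, which by the same stability argument forces $E\simeq E_L$, a generated bundle --- a contradiction. You should adopt an argument of this kind rather than the sketched Harder--Narasimhan estimate. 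With these two repairs your route does yield a correct proof, one that bypasses the paper's case analysis on $\dim V$; but as it stands the proposal leaves precisely the load-bearing steps unproved.
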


\begin{proof} (This follows the same lines as \cite[Proposition 6.1]{ln2}, but is more complicated, so we give the proof in full.)
Suppose $E \in B(n,d,k)$. Note that we have $h^0(E) = k$ by Proposition \ref{p3.2}.
We first claim that $E$ is generated.

If not, there exists an exact sequence 
$$
0 \ra F \ra E \ra \CC_q \ra 0
$$
with $h^0(F) = k$. Let $L = K_C(-p)$. Since $E \otimes L$ is stable with slope $> 9$, it follows that $E \otimes L$ is generated. Hence
$$
h^1(F \otimes L) = h^1(E \otimes L) = 0.
$$
It now follows from Lemma \ref{lel} that $h^0(E_L^* \otimes F) > 0$. Hence $E \simeq E_L$. This contradicts the assumption that $E$ is not generated.

It follows that  we have an exact sequence
$$
0 \ra G^* \ra H^0(E) \otimes \cO_C \ra E \ra 0
$$
with $n_G = k-n,\; d_G = d$ and $h^0(G) \geq k$. It follows that $K_C \otimes G^*$ has rank $k-n$ and
\begin{eqnarray*}
 h^0(K_C \otimes G^*) &=& h^0(G) -d + 4(k-n)\\
 & \geq & k-d + d-n = n_G.\\
\end{eqnarray*}
Any such bundle necessarily has a section with a zero. So $K_C \otimes G^*$ admits a line subbundle $M$ with $h^0(M) \geq 1$ and $d_M \geq 1$ and we get the diagram
$$
\xymatrix{
0 \ar[r] & E^* \ar[d]_{\alpha} \ar[r] & W \otimes \cO_C \ar[d] \ar[r] & G \ar[r] \ar[d] & 0\\
0  \ar[r] & H^* \ar[r]  & V \otimes \cO_C  \ar[r] \ar[d] & K_C \otimes M^* \ar[r] \ar[d] & 0\\
&  & 0 & 0 &
}
$$
where $W$ is a subspace of $H^0(G)$ of dimension $k$ and $V$ is the image of $W$ in $H^0(K_C \otimes M^*)$.
Now $K_C \otimes M^*$ is not isomorphic to $\cO_C$, since $h^0(G^*) = 0$. Hence $\dim V \geq 2$ and $d_{K_C \otimes M^*} \geq 3$, since $C$ is non-hyperelliptic.
Since $d_{K_C \otimes M^*} \leq 7$, we have also $\dim V \leq 4$ with equality only if $K_C \otimes M^* \simeq K_C(-p)$ for some $p \in C$. 

If $\alpha = 0$, then $E^*$ maps into $W' \otimes \cO_C$, where $W = W' \oplus V'$ and $V'$ maps isomorphically to $V$. It follows that $V' \otimes \cO_C$ 
maps to a trivial direct summand of $G$ contradicting the fact that $h^0(G^*) =0$. So $\alpha \neq 0$.

If $\dim V = 2$, then $\alpha(E^*)$ is a quotient line bundle of $E^*$ of degree $\leq -3$, contradicting the stability of $E$. 

If $\dim V = 4$, we can write $K_C \otimes M^* \simeq K_C(-p) =: L$ and then $H \simeq E_L$, which is stable with $\mu(E_L) \geq \mu(E)$. Hence $E \simeq E_L$.

It remains to consider the case $\dim V = 3$. If $K_C \otimes M^* \simeq K_C(-p) =: L$ for some $p \in C$, then $H^*$ is a subbundle of $E^*_L$, so 
there exists a non-zero homomorphism $E^* \ra E_L^*$ implying $E \simeq E_L$. Finally suppose that $\dim V = 3$ and $V = H^0(K_C \otimes M^*)$. Then $d_H = d_{K_C \otimes M^*} = 5$ or 6. If $\alpha$ has rank 2, this contradicts the stability of $E$. 
So suppose $\rk \, \alpha = 1$.
Since $H$ is generated and $H^0(H^*) =0$, every quotient line bundle of $H$ has degree $\geq 3$, again contradicting the stability of $E$.
 \end{proof}

\begin{prop} \label{p5.6}
Let $C$ be a non-hyperelliptic curve of genus $5$ and $E$ a semistable bundle of rank $n$ and degree $d$ with slope $\mu(E) > \frac{7}{3}$. Then
$$
h^0(E) \leq d-n.
$$ 
\end{prop}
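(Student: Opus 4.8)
The plan is to prove $h^0(E)\le d-n$ by induction on $n$, the inductive hypothesis being that the bound holds for every semistable bundle of rank $<n$ and slope $>\tfrac73$. If $\mu(E)\ge3$ we are done by Proposition~\ref{p2.2}(ii); this also settles $n=1$, since a line bundle of slope $>\tfrac73$ has integer slope $\ge3$. So assume $\tfrac73<\mu(E)<3$. If $E$ is strictly semistable, a Jordan--H\"older step gives $0\to E'\to E\to E''\to0$ with $E',E''$ semistable of slope $\mu(E)$ and rank $<n$, and adding the two inductive bounds yields $h^0(E)\le h^0(E')+h^0(E'')\le d-n$. Hence we may assume $E$ is stable with $\tfrac73<\mu(E)<3$. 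For $n\le3$ the only pairs $(n,d)$ in this range are $(2,5)$ and $(3,8)$, and in both Proposition~\ref{p2.2}(i) already gives $h^0(E)\le\lfloor\tfrac12(d+n)\rfloor=d-n$; so from now on $n\ge4$.

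Assume for a contradiction that $h^0(E)\ge d-n+1$. Since $\mu(E)>\tfrac73$ we have $d-n>n+\tfrac14(d-n)$, so $h^0(E)>n+\tfrac14(d-n)$ and Proposition~\ref{p3.2}(i) supplies a nonzero homomorphism $\varphi\colon E_L\to E$, where $L=K_C(-p)$ and, by Lemma~\ref{l5.1}, $E_L$ is stable of rank $3$, degree $7$, with $h^0(E_L)=4$. The heart of the argument is to control the image of $\varphi$: stability of $E_L$ forces every proper quotient bundle of $E_L$ to have slope $>\tfrac73$, while stability of $E$ forces every proper subbundle of $E$ to have slope $<\mu(E)<3$. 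Comparing degrees, $\rk(\IM\varphi)\ne1$ (a rank-$1$ image would have saturation a line subbundle of $E$ of degree $\ge3$); if $\rk(\IM\varphi)=2$ then $\IM\varphi$ is forced to be a stable subbundle of $E$ of rank $2$ and degree $5$; and if $\varphi$ is injective then the saturation $\widetilde E_L$ of $E_L$ in $E$ has rank $3$ and degree $7$ or $8$, where in the degree-$8$ case a short Harder--Narasimhan computation (using $\mu(E)<3$, so that no subsheaf of rank $1$ or $2$ can have slope strictly between $\tfrac83$ and $\mu(E)$) shows $\widetilde E_L$ is semistable. (For $\mu(E)<\tfrac52$ one can instead invoke Proposition~\ref{p3.2}(iii) directly.)

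In every case one obtains an exact sequence $0\to S\to E\to Q\to 0$ with $S$ equal to $E_L$, $\widetilde E_L$ or $\IM\varphi$ and, by Lemma~\ref{l5.1} and Proposition~\ref{p2.2}(i), $h^0(S)\le 4,\ 5,\ 3$ respectively --- so in each case $h^0(S)\le d_S-n_S$. On the other hand $Q$ is a proper quotient of the stable bundle $E$, so every Harder--Narasimhan factor of $Q$ is semistable of rank $<n$ and slope $>\mu(E)>\tfrac73$; applying the inductive hypothesis to these factors and summing gives $h^0(Q)\le d_Q-n_Q$. Therefore $h^0(E)\le h^0(S)+h^0(Q)\le (d_S-n_S)+(d_Q-n_Q)=d-n$, contradicting $h^0(E)\ge d-n+1$. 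This proves the bound.

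I expect the main obstacle to be the stable case with $\tfrac52\le\mu(E)<3$. There Proposition~\ref{p2.2}(i) only gives $h^0(E)\le\tfrac12(d+n)$, which exceeds $d-n$; Proposition~\ref{p3.2}(iii) no longer guarantees that $E_L$ embeds as a subbundle; and for trigonal $C$ the Clifford bound of Proposition~\ref{p1.6} is unavailable since $\Cl(C)=1$. Turning the image of $\varphi$ into a genuine subbundle with small $h^0$ --- the degree bookkeeping distinguishing the rank-$2$ degree-$5$ and rank-$3$ degree-$7$ or $8$ cases, and the verification that the rank-$3$ degree-$8$ piece is semistable --- is the delicate point; once that is done, peeling off $S$ and running the induction on $Q$ closes the gap.
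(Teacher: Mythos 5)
Your proof is correct, and it shares the paper's skeleton---induction on the rank, Proposition \ref{p2.2} for the base cases and for $\mu\ge 3$, and Proposition \ref{p3.2} to produce a nonzero map $E_L\to E$ under the contradiction hypothesis---but the inductive step is organized around a different decomposition. The paper takes $F\subset E$ to be a stable proper subbundle of maximal slope, proves that the quotient $G=E/F$ is then automatically semistable of slope $\ge\mu(E)>\frac{7}{3}$, and uses the map $E_L\to E$ only to conclude $\mu(F)\ge\frac{7}{3}$, the boundary case $\mu(F)=\frac{7}{3}$ being absorbed by choosing $F\simeq E_L$ and using $h^0(E_L)=4$. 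You instead peel off the saturation $S$ of the image of $E_L\to E$ itself; this forces your three-way analysis of the image (rank $2$ degree $5$; rank $3$ degree $7$; rank $3$ degree $8$ with the semistability check), but it lets you bound $h^0(S)\le d_S-n_S$ directly from Lemma \ref{l5.1} and Proposition \ref{p2.2}(i), and in exchange you need prove nothing about the quotient, since applying the inductive hypothesis to the Harder--Narasimhan factors of $Q=E/S$ (all of slope $>\mu(E)>\frac{7}{3}$ because $E$ is stable) suffices. Your explicit reduction of the strictly semistable case to a Jordan--H\"older step is also slightly cleaner than carrying semistable $E$ through the whole inductive step as the paper does. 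Both routes are sound: the paper's is shorter because the single observation ``a maximal-slope subbundle has semistable quotient'' replaces your case analysis, while yours has the minor virtue of identifying exactly which low-rank subbundles of $E$ the map from $E_L$ can produce, which is information the paper extracts separately in Lemma \ref{l5.5}.
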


\begin{proof}
The proof is by induction on $n$. Note that by Proposition \ref{p2.2}(ii) we can assume that $\mu(E)<3$. 

For  $n = 1$, the result is trivial. 
For $n = 2$, the only possibility is $d=5$ and then $h^0(E) \leq 3$ by Proposition \ref{p2.2}(i). For $n=3$, the only possibility is $d = 8$ 
and then $h^0(E) \leq 5$ by Proposition \ref{p2.2}(i).

Suppose now $n \geq 4$ and the proposition is proved for rank $ \leq n-1$.
Then there exists an exact sequence
$$
0 \ra F \ra E \ra G \ra 0
$$
in which $F$ is a proper subbundle of maximal slope and is stable.
Moreover $G$ is semistable.
To see this, suppose that $G'$ is a quotient bundle of $G$ with $\mu(G') < \mu(G)$. Set $F' := \ker(E \ra G')$. Note that $n_{F'} >  n_F$. 
We have 
$$
\mu(F') \leq \mu(F).
$$
Also
$$
\mu(F') \leq \mu(G')
$$
by semistability of $E$. So
\begin{eqnarray*}
d &=& n_F \mu(F) + (n-n_F) \mu(G)\\
&>& n_F \mu(F') + (n-n_F) \mu(G')\\
& \geq & n_{F'} \mu(F') + (n-n_{F'}) \mu(G') =d,
\end{eqnarray*}
a contradiction.

If $h^0(E)>d-n$, then, by Proposition \ref{p3.2}, there exists a non-zero homomorphism $E_L \ra E$. So
$$
\mu(F) \geq \mu(E_L) = \frac{7}{3},
$$
with equality only if $F \simeq E_L$. Also
$$
\mu(G) > \frac{7}{3}
$$
by semistability of $E$. So $G$ satisfies the inductive hypotheses and $F$ does, unless $F \simeq E_L$ in which case $h^0(F) = d_F -n_F$. 
This completes the inductive step and hence the proof.
\end{proof}

\begin{lem} \label{l5.3}
Let $C$ be a non-hyperelliptic curve of genus $5$.
There exists a stable bundle $U$ on $C$ of rank $2$ with $d_U = 5$ and $h^0(U) = 3$ if and
only if $C$ is trigonal. Moreover  $U \simeq D(K_C \otimes T^*)$, where $T$ is the unique trigonal bundle.
\end{lem}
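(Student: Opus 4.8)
The plan is to prove the two implications separately, in each case relating $U$ to the dual span of the degree-$5$ line bundle $M:=K_C\otimes T^*$ (with $T$ the trigonal bundle, when $C$ is trigonal).

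\textbf{Sufficiency.} Suppose $C$ is trigonal. First I would record that $h^0(M)=h^1(T)=3$ by Riemann--Roch and that $M$ is generated: for $p\in C$ one has $h^0(M(-p))=h^1(M(-p))=h^0(T(p))$ (since $\deg M(-p)=g-1=4$), and $h^0(T(p))=2$ because a non-hyperelliptic curve of genus $5$ carries no $g^2_4$; hence $h^0(M(-p))=h^0(M)-1$. Then $U:=D(M)=E_M$ is defined by \eqref{eq2.1}, has rank $h^0(M)-1=2$, degree $d_M=5$ and $h^0(U)\ge h^0(M)=3$. To see $U$ is stable, note that dualizing \eqref{eq2.1} exhibits $U$ as a quotient of $H^0(M)^*\otimes\cO_C$, so $U$ and every quotient line bundle of $U$ are generated; any destabilizing quotient line bundle of $U$ would therefore be a generated line bundle of degree $\le2$, hence $\cO_C$ (degrees $1$ and $2$ are excluded, forcing a $g^1_1$ or a $g^1_2$). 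But $\cO_C$ cannot be a quotient of $U$, as that would give $\cO_C\hra U^*\subseteq H^0(M)\otimes\cO_C$, contradicting $h^0(U^*)=0$ (apply $H^0$ to \eqref{eq2.1}). Thus every quotient line bundle of $U$ has degree $\ge3>\mu(U)=\tfrac52$ and $U$ is stable. Finally Proposition \ref{p5.6} (applicable since $\mu(U)>\tfrac73$) gives $h^0(U)\le d_U-n_U=3$, so $h^0(U)=3$; this also proves the ``moreover'' in this direction.

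\textbf{Necessity.} Let $U$ be stable of rank $2$, degree $5$ with $h^0(U)=3$. The crucial point is that the natural map $\varphi\colon\Lambda^2H^0(U)\to H^0(\det U)$ is injective: its source is $3$-dimensional, so every element is decomposable, and if $s\wedge t\in\ker\varphi$ with $s,t$ linearly independent then $s$ and $t$ span a rank-$1$ subsheaf of $U$ whose saturation is a line subbundle $N$ with $h^0(N)\ge2$ and, by stability, $d_N\le2$ --- impossible on a non-hyperelliptic curve. Hence $h^0(\det U)\ge3$; since $\deg\det U=5$, Clifford's theorem forces $h^0(\det U)=3$, so that $K_C\otimes(\det U)^*$ is a $g^1_3$. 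Therefore $C$ is trigonal and, by uniqueness of the trigonal bundle, $\det U=K_C\otimes T^*=M$. It remains to identify $U$ with $D(M)$, for which I would first show $U$ is generated: if not, the image $W$ of the evaluation map $H^0(U)\otimes\cO_C\to U$ has rank $2$ (rank $1$ would again produce a line subbundle with $h^0\ge3$ and degree $\le2$), and choosing $W\subseteq U_1\subseteq U$ with $U/U_1$ of length $1$ yields a rank-$2$ bundle $U_1$ with $d_{U_1}=4$ and $h^0(U_1)=h^0(U)=3$; applying $\varphi$ to $U_1$ gives $h^0(\det U_1)\ge3$ with $\deg\det U_1=4$, impossible for lack of a $g^2_4$. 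So the evaluation sequence of $U$ reads $0\to R\to H^0(U)\otimes\cO_C\to U\to0$ with $R$ a line bundle; taking determinants gives $R\simeq(\det U)^{-1}=M^{-1}$, and dualizing gives $0\to U^*\to H^0(U)^*\otimes\cO_C\to M\to0$ with $h^0(U^*)=0$, so the induced map $H^0(U)^*\to H^0(M)$ is an isomorphism of $3$-dimensional spaces. Consequently this sequence is isomorphic to the evaluation sequence \eqref{eq2.1} of $M$, and $U^*\simeq E_M^*$, i.e. $U\simeq D(K_C\otimes T^*)$.

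The hard part will be the necessity direction, and within it the proof that $U$ (equivalently its degree-$4$ twist $U_1$) is globally generated: the sharp bound of Proposition \ref{p1.6} is unavailable on trigonal curves, so the argument must instead pass to $U_1$ and again exploit the non-existence of a $g^2_4$ through the determinant, exactly as for $U$ itself. The injectivity of $\varphi$, which rests on the absence of any pencil of degree $\le2$, is the other step that needs care.
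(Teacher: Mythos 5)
Your proof is correct, and while it shares the paper's overall skeleton (both directions hinge on identifying $U$ with $D(K_C\otimes T^*)$), the details differ at two genuine points. For sufficiency, the paper does not take the dual span directly: it constructs $U$ as a non-trivial extension $0\to K_C\otimes T^{*2}\to U\to T\to 0$ in which all sections of $T$ lift, using the base-point-free pencil trick to show the multiplication map $H^0(T)\otimes H^0(T^2)\to H^0(T^3)$ has $2$-dimensional kernel; stability is then immediate because a destabilizing line subbundle would have to split the extension. Your route — proving $M=K_C\otimes T^*$ is generated, taking $U=E_M$, and killing destabilizing quotients via the fact that they are generated line bundles of degree $\le 2$ — is equally valid and arguably cleaner for stability, and your appeal to Proposition \ref{p5.6} (which precedes the lemma and does not depend on it) legitimately pins down $h^0(U)=3$; Proposition \ref{p2.2}(i) would do just as well. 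The paper's extension description has the side benefit of being reused later (Lemma \ref{l4.1}). For necessity, the paper proves generation of $U$ in one line from Proposition \ref{pln2}(ii) (a rank-$2$, degree-$4$ elementary transformation of $U$ is semistable by stability of $U$, hence has $h^0\le\frac52$), and only then reads off $M$ with $h^0(M)=3$ from the dualized evaluation sequence; you instead get $h^0(\det U)\ge 3$ first via injectivity of $\Lambda^2H^0(U)\to H^0(\det U)$ and rule out non-generation through the non-existence of a $g^2_4$. Your argument is more classical and self-contained but longer; the paper's is shorter because it leans on the already-established semistable bound. Both are complete.
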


\begin{proof}
Let $E$ be a stable bundle of rank 2 with $d_E = 5$ and $h^0(E) =3$. Using Proposition \ref{pln2}(ii), it is easy to see that the 
evaluation map $H^0(E) \otimes \cO_C \ra E$ is surjective. It follows that $E \simeq E_M$ for some line 
bundle $M$ of degree 5 with $h^0(M) = 3$. Then $K_C \otimes M^*$ has degree 3 and $h^0 = 2$.
In other words, it is the trigonal bundle.

Hence, if there exists a stable rank-2 bundle $E$ of degree 5 with $h^0(E) = 3$, then $C$ is trigonal and $E\simeq D(K_C \otimes T^*)$. 
Now there exist non-trivial extensions
\begin{equation} \label{e3.2}
0 \ra K_C \otimes T^{*2} \ra U \ra T \ra 0.
\end{equation}
For any such extension, $U$ is stable. Moreover, there exists an extension for which all sections of $T$ lift to $U$, since
\begin{equation} \label{e3.4}
H^0(T) \otimes H^0(T^2) \ra H^0(T^3) 
\end{equation}
is not surjective. To see this, note first that $H^0(T) \otimes H^0(T^2)$ has dimension 6 and $h^0(T^3) = 5$. The kernel of \eqref{e3.4} is isomorphic 
to $H^0(T)$ by the base-point free pencil trick, so has dimension 2.
\end{proof}

\begin{lem} \label{l5.5}
Let $C$ be a non-hyperelliptic curve of genus $5$.
Let $E$ be a semistable bundle on $C$ of slope $\mu,\, \; 2 < \mu < 3$ such that $h^0(E) > n + \frac{1}{4}(d-n)$.
Then one of the following occurs.
\begin{enumerate}
 \item[(i)] $\mu > \frac{7}{3}$ and $E_L$ can be embedded as a subbundle of $E$;
 \item[(ii)] $\mu \geq \frac{8}{3}$ and $E$ has a rank-$3$ subbundle $E'$ of degree $8$;
\item[(iii)] $C$ is trigonal, $\mu \geq \frac{5}{2}$ and $U$ can be embedded as a subbundle of $E$.
\end{enumerate}
\end{lem}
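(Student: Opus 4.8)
The plan is to invoke Proposition \ref{p3.2} and then to analyse the image of a nonzero homomorphism $E_L\to E$, splitting according to the rank of its kernel.

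Since $\mu>2$ and $h^0(E)>n+\frac14(d-n)$, Proposition \ref{p3.2} applies: it gives $h^0(E_L^*\otimes E)>0$, it gives $\mu>\frac73$, and, if $\mu<\frac52$, it embeds $E_L$ as a subbundle of $E$. The last conclusion is exactly (i), so from now on I may assume $\mu\ge\frac52$. Fix a nonzero $\phi\colon E_L\to E$, let $F=\phi(E_L)\subseteq E$ be its image and $F'\subseteq E$ the saturation of $F$. Since $E_L$ is stable of slope $\frac73$ (Lemma \ref{l5.1}), every subsheaf of $E_L$ of rank $1$ has slope $<\frac73$, hence degree $\le2$, and every subsheaf of rank $2$ has slope $<\frac73$, hence degree $\le4$. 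As $\phi$ is nonzero, $\ker\phi$ has rank $0$, $1$ or $2$, and these are treated in turn.

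If $\phi$ is injective, then $F'$ is a rank-$3$ subbundle of $E$ with $7\le d_{F'}$, while semistability of $E$ and $\mu<3$ give $d_{F'}\le8$. If $d_{F'}=7$ then $F'=F\cong E_L$, so $E_L$ embeds as a subbundle of $E$ and (recalling $\mu>\frac73$) we are in case (i); if $d_{F'}=8$ then semistability forces $\mu\ge\frac83$ and we are in case (ii). If $\rk(\ker\phi)=2$, then $\deg(\ker\phi)\le4$, so $F\cong E_L/\ker\phi$ is a rank-$1$ subsheaf of $E$ of degree $\ge3$, whose saturation is a line subbundle of $E$ of slope $\ge3>\mu$, contradicting semistability; so this case does not occur. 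In the remaining case $\rk(\ker\phi)=1$, the sheaf $\ker\phi$ is a line bundle of degree $\le2$, hence $h^0(\ker\phi)\le1$ because $C$ is non-hyperelliptic. From $0\to\ker\phi\to E_L\to F\to0$ and $h^0(E_L)=4$ we get $h^0(F)\ge3$, while $d_F=7-\deg(\ker\phi)\ge5$; semistability of $E$ and $\mu<3$ then force $d_{F'}=5$, so $F'=F$ is a rank-$2$ subbundle of $E$ of degree $5$ with $h^0(F')\ge3$. A destabilising line subbundle of $F'$ would have degree $\ge3$, again contradicting semistability of $E$, so $F'$ is stable; by Proposition \ref{p2.2}(i) this gives $h^0(F')=3$, and Lemma \ref{l5.3} now shows that $C$ is trigonal and $F'\cong U$. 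Since $\mu\ge\frac52$, this is case (iii).

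The slope-and-degree estimates in the injective and rank-$2$ cases are routine; the step that requires the most care is the rank-$1$ case, where one must check simultaneously that the image $F$ is already saturated in $E$, that it carries at least three sections, and that it is stable — the three properties that together let one identify it with $U$ via Lemma \ref{l5.3}.
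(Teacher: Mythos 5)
Your proposal is correct and follows essentially the same route as the paper: apply Proposition \ref{p3.2} to obtain $\mu>\frac{7}{3}$ and a non-zero homomorphism $E_L\to E$, then classify according to the rank and saturation of its image, using the stability of $E_L$, the semistability of $E$ and Lemma \ref{l5.3} to rule out a rank-$1$ image and to identify a rank-$2$ image with $U$. Your write-up simply makes explicit the details (degree bounds on the kernel, $h^0(F)\ge 3$, saturation and stability of the rank-$2$ image) that the paper's terser argument leaves to the reader.
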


\begin{proof}
By Proposition \ref{p3.2}(i) and (ii), $h^0(E_L^* \otimes E) > 0$ and $\mu>\frac73$. Let $E_L \ra E$ be a non-zero homomorphism. If this homomorphism does not 
embed $E_L$ as a subbundle, then either (ii) holds or the image of $E_L$ in $E$ is stable of  rank 2 and degree $5$. By Lemma \ref{l5.3}, this leaves (iii) as the 
only possibility.
\end{proof}

\begin{prop} \label{p3.6}
Let $C$ be a non-hyperelliptic curve of genus $5$. Then $B(n,d,k) \neq \emptyset$ in the following 
cases.
\begin{enumerate}
\item[(i)] $(n,d,k) = (4r +s,8r+2s+1,5r+s)$ for $1 \leq r \leq 4, \; s \geq 0$;
\item[(ii)] $(n,d,k) = (4r+s,8r+2s+2,5r+s)$ for $1 \leq r \leq 4, \; s \geq 4r+1$;
\item[(iii)] $(n,d,k) = (n,2n+1,n+r)$ for $n \geq 5r,\; r \geq 1$.
\end{enumerate}
\end{prop}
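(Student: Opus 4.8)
The plan is to produce each family of bundles by an explicit construction and then verify stability together with the lower bound on $h^0$. For part (i), the building block is the bundle $E_L$ of Lemma \ref{l5.1}, which is stable of rank $3$, degree $7$ and has $h^0(E_L)=4$. Take $L=K_C(-p)$ and consider $L'=K_C(-q)$ for a second point $q$; since $L\not\simeq L'$ generically, the direct sum $E_L\oplus\cdots$ is not a route to stability, so instead I would apply an elementary transformation (Proposition \ref{p2.5}(i)) to a direct sum of $r$ general bundles of the form $E_{L_i}$ together with $s$ general line bundles of degree $2$. More precisely, start from a semistable bundle built from $r$ copies of (deformations of) $E_L$, of rank $4r$, degree $8r$ and with $5r$ sections — such a bundle exists because $B(3,7,4)\ne\emptyset$ and the generic element of a suitable extension stays stable — then add $s$ general line bundles $M_1,\dots,M_s$ of degree $2$, each with $h^0(M_i)\ge 1$, and perform a single positive elementary transformation to land in rank $4r+s$, degree $8r+2s+1$. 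The extra $+1$ in the degree is exactly what Proposition \ref{p2.5} buys while keeping stability of the generic elementary transformation; the section count is bounded below by $5r+s$ because sections can only be lost, not gained, and by genericity one checks that the expected number survives.

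For part (ii), the same strategy applies but now one starts in degree $8r+2s+2$. The point of the hypothesis $s\ge 4r+1$ is that with this many line bundle summands the slope is low enough ($\mu\le\frac{8r+2s+2}{4r+s}$, which tends to $2$ as $s\to\infty$) that Corollary \ref{c2.2} or Proposition \ref{pln2}(i)/(iii) already furnishes stable bundles with the required sections in a neighbouring degree, and a further twist or elementary transformation moves us to the target triple. Concretely, I would realize $(4r+s, 8r+2s+2, 5r+s)$ by taking the part (i) bundle of type $(4r+s-1, 8r+2s+1, 5r+s-1)$... — more cleanly, build from $r$ copies of $E_L$ and $s$ line bundles of degree $2$, where now one of the line bundles is replaced by a line bundle of degree $3$ with a section, and again apply Proposition \ref{p2.5}; the constraint $s\ge 4r+1$ is what keeps the maximal destabilizing slope under control. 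Part (iii) is the easiest: here $\mu=\frac{2n+1}{n}$ is just above $2$, and one wants $n+r$ sections with $n\ge 5r$. I would take a general line bundle $N$ of degree $3$ with $h^0(N)=1$... — rather, start with a stable bundle of rank $n$, degree $2n$ with $n+r$ sections (which exists by Proposition \ref{pln2}(iii) precisely when $n+r\le \frac{n(g+1)}{g}=\frac{6n}{5}$, i.e. $r\le n/5$, i.e. $n\ge 5r$), and apply one positive elementary transformation of length $1$ via Proposition \ref{p2.5}(i) to reach degree $2n+1$, losing no sections.

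The main obstacle in all three parts is \emph{stability} of the resulting bundle, not the section count, which is essentially automatic from the exact sequences. For the elementary-transformation constructions, Proposition \ref{p2.5} gives stability directly provided the input is a direct sum of pairwise non-isomorphic line bundles; the difficulty is that our input involves rank-$3$ pieces $E_{L_i}$, so Proposition \ref{p2.5} does not apply verbatim. I would handle this by first deforming the direct sum $\bigoplus E_{L_i}\oplus\bigoplus M_j$ to a stable bundle of the same invariants using a dimension count on $\Ext^1$ between the summands (the relevant $\Ext$ groups are nonzero and large enough because the slopes are close), checking that sections deform along, and then applying the elementary transformation to that stable bundle — a small positive elementary transformation of a stable bundle with generic supporting torsion remains stable by a standard argument (a destabilizing sub- or quotient sheaf would have to already nearly destabilize the original). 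The bookkeeping of slopes to confirm that no subsheaf violates stability — especially in part (ii) where the range $s\ge 4r+1$ is needed — is the one genuinely delicate computation, and I would organize it by bounding the slope of any subbundle generated by a subspace of sections using Proposition \ref{p2.2}(i) and Lemma \ref{l5.5}.
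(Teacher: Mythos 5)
The paper itself disposes of this proposition in one line, citing \cite[Propositions 3.3 and 3.6 and Example 3.9]{ln2}, so there is no detailed argument to match yours against; the question is whether your construction stands on its own. It does not, for two concrete reasons.

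First, the arithmetic of part (i) is wrong. You take $E_L$ as the building block, but $E_L$ has invariants $(n,d,h^0)=(3,7,4)$, so $r$ copies of it (or of deformations of it) give rank $3r$, degree $7r$ and at most $4r$ sections --- not the rank $4r$, degree $8r$, $5r$ sections you assert. The family $(4r+s,\,8r+2s+1,\,5r+s)$ is built on the block $(4,8,5)$, which for genus $5$ is $D(K_C)$ (Proposition \ref{pln2}(iii): $B(4,8,5)=\{D(K_C)\}$), not $E_L$. The same mis-identification propagates into your part (ii). You also never use, or account for, the hypothesis $1\le r\le 4$; in the actual construction this bound comes from the fact that $\Hom(D(K_C),\CC_p)$ has dimension $4$, which limits how many copies of the (unique, hence mutually isomorphic) bundle $D(K_C)$ can be separated by a single elementary transformation.

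Second, your proposed fix for the failure of Proposition \ref{p2.5} --- ``first deform the direct sum to a stable bundle \dots checking that sections deform along'' --- runs directly into semicontinuity: $h^0$ is upper semicontinuous, so a generic deformation of a direct sum with $h^0=5r+s$ has $h^0\le 5r+s$ and typically strictly fewer sections. Keeping the sections while gaining stability is precisely the hard content of the proposition, and it is why the source constructions work with elementary transformations (where $h^0(E)\ge h^0(E_0)$ is forced by the inclusion $E_0\hra E$) rather than deformations. Your part (iii) is essentially sound: starting from $E_0\in B(n,2n,n+r)$ (which exists by Proposition \ref{pln2}(iii) exactly when $n\ge 5r$) and performing a length-one positive elementary transformation, any subbundle $F'$ of the result meets $E_0$ in a subsheaf of slope $<2$, hence has slope $\le 2<\frac{2n+1}{n}$, so stability is automatic and no sections are lost. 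But parts (i) and (ii) as written do not prove the statement.
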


\begin{proof}
These are special cases of \cite[Propositions 3.3 and 3.6 and Example 3.9]{ln2}.
\end{proof}

\begin{prop} \label{p3.7}
Let $C$ be a curve of genus $5$. Then $B(n,d,k) \neq \emptyset$ for $3n < d < 4n$ and $k \leq d -2n$. 
\end{prop}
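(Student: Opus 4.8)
The plan is to produce, for each pair $(n,d)$ with $3n<d<4n$ and each $k\le d-2n$, a stable bundle of rank $n$ and degree $d$ with at least $k$ sections, by an elementary transformation applied to a direct sum of line bundles. First I would reduce to the extremal case $k=d-2n$, since $B(n,d,k+1)\subset B(n,d,k)$. The idea is to start from $n$ general line bundles $L_1,\dots,L_n$ of some common degree $d_0$, each with a controlled number of sections, and then apply Proposition \ref{p2.5}(i) to obtain a stable bundle $E$ with $0\to L_1\oplus\cdots\oplus L_n\to E\to\tau\to 0$, where $\tau$ has length $t=d-nd_0$. Since the inclusion induces an injection on global sections, $h^0(E)\ge\sum h^0(L_i)=n\,h^0(L_1)$ once the $L_i$ are chosen in the same component of the relevant $\mathrm{Pic}$. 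So I need $d_0$ and the $L_i$ chosen so that $nd_0\le d$ and $n\,h^0(L_1)\ge d-2n$.

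The natural choice is to take $d_0=3$ and the $L_i$ to be general line bundles of degree $3$; on a genus-$5$ curve a general such line bundle has $h^0=1$, which only gives $h^0(E)\ge n$ — not enough. So instead I would take $d_0$ slightly larger, or better, take the $L_i$ to be general line bundles of degree $\ge 4$ (so $h^0(L_i)\ge 1$, in fact a general line bundle of degree $d'$ has $h^0 = \max\{1, d'-4\}$), and balance the bookkeeping. Concretely: write $d = 3n + e$ with $1\le e\le n-1$. Take $e$ of the line bundles to have degree $4$ and $n-e$ to have degree $3$; then their direct sum has degree $3n+e = d$, so $t=0$ and no elementary transformation is needed — but a direct sum is not stable. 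So I would instead lower the degrees by one somewhere and take $t=1$: e.g. take line bundles of total degree $d-1$ and apply Proposition \ref{p2.5}(i) with $\tau$ of length $1$. The section count: general line bundles of degree $3$ contribute $h^0=1$ each, and we can choose $d-2n-n = d-3n = e-?$ of them... Let me instead organize it as: take $n$ general line bundles whose degrees are as equal as possible summing to $d - 1$; apply \ref{p2.5}(i) to get a stable $E$ of degree $d$ with $h^0(E)\ge\sum h^0(L_i)$. Since $d-1 < 4n$, the average degree is $<4$, and by choosing the split one arranges $\sum h^0(L_i)\ge d-2n$: indeed if $a$ of the bundles have degree $\ge 5$ contributing $\ge 1$ extra section each and the rest have degree $\le 4$ contributing $1$ each, the total is $n + (\text{extra})$, and the extra can be made to equal $d-1-4\cdot(\#\{\deg\le 4\})$ by the Riemann–Roch estimate $h^0(L)\ge d_L-4$. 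Balancing gives $\sum h^0(L_i)\ge d-1-2n\cdot? $ — the precise split must be checked, but the arithmetic works out to at least $d-2n$ because $d<4n$ forces enough low-degree summands.

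The key steps in order are: (1) reduce to $k=d-2n$; (2) choose integers $d_1,\dots,d_n\ge 3$, pairwise allowing equality, with $\sum d_i \in\{d, d-1\}$ and $\sum\max\{1,d_i-4\}\ge d-2n$ — here I would exhibit an explicit choice (roughly $\lceil (d-1)/n\rceil$ or $\lfloor (d-1)/n\rfloor$ for each $d_i$) and verify the section bound using $d<4n$; (3) pick general $L_i\in\mathrm{Pic}^{d_i}(C)$, distinct, each with $h^0(L_i)=\max\{1,d_i-4\}$ (generic line bundles on a genus-$5$ curve satisfy this); (4) apply Proposition \ref{p2.5}(i) with a torsion sheaf $\tau$ of length $d-\sum d_i$ to get a stable $E$ of rank $n$, degree $d$; (5) observe $h^0(E)\ge\sum h^0(L_i)\ge d-2n=k$, so $E\in B(n,d,k)$.

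The main obstacle I expect is step (2) together with the edge cases of step (3): making sure the chosen degrees $d_i$ are all $\ge 3$ (needed so a general $L_i$ is nonspecial-or-better and the count $h^0(L_i)=\max\{1,d_i-4\}$ holds — for $d_i\le 4$ this is just $h^0=1$ for general $L_i$, and for $3n<d$ the average degree exceeds $3$ so this is automatic), that the section inequality $\sum\max\{1,d_i-4\}\ge d-2n$ genuinely holds for the balanced choice (this is where the hypothesis $d<4n$ is essential, as $d\ge 4n$ would break it), and that at least one torsion point is available when $\sum d_i=d-1$ so Proposition \ref{p2.5}(i) applies with $t\ge 1$ — when $\sum d_i = d$ exactly one cannot use \ref{p2.5}(i) directly, so I would always arrange $\sum d_i \le d-1$ by lowering one $d_i$, which costs at most one section and is absorbed by the slack in the inequality since $d-2n\le n-1<\sum\max\{1,d_i-4\}$ in the relevant range. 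A final check that $E$ is stable is immediate from Proposition \ref{p2.5}(i).
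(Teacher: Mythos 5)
Your construction goes in the wrong direction, and the arithmetic you hoped would ``work out'' in step (2) in fact cannot. If you build $E$ via Proposition \ref{p2.5}(i), so that $L_1\oplus\cdots\oplus L_n\subset E$, then each $L_i$ is a line subbundle of $E$, and stability forces $d_{L_i}<\mu(E)=\frac{d}{n}<4$, i.e.\ $d_{L_i}\le 3$ for every $i$. A line bundle of degree $\le 3$ on a curve of genus $5$ has $h^0\le 1$ unless it is the trigonal bundle, which is unique and so can occur at most once among pairwise non-isomorphic $L_i$; hence $\sum_i h^0(L_i)\le n+1$, whereas the target is $k=d-2n$, which ranges up to $2n-1$. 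So for $d\ge 3n+2$ no choice of degrees in your scheme yields enough sections --- the slack you invoke does not exist. Two further problems: Proposition \ref{p2.5} as stated requires all the $L_i$ to have the \emph{same} degree, so your mixed-degree direct sums are not covered by it (and an unbalanced direct sum already contains a destabilizing sub-line-bundle); and a \emph{general} line bundle of degree $d'\le 4$ on a genus-$5$ curve has $h^0=0$, not $1$ --- you would need general \emph{effective} ones to get $h^0=1$, and special ones to do better.

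The repair, which is what the paper does, is to use the downward elementary transformation, Proposition \ref{p2.5}(ii): take $n$ pairwise non-isomorphic \emph{special} line bundles $L_i$ of degree $4$ with $h^0(L_i)=2$ (such $g^1_4$'s exist in a positive-dimensional family on every curve of genus $5$, so arbitrarily many non-isomorphic ones are available), and let $\tau$ have length $t=4n-d>0$. The resulting stable $E$ has rank $n$, degree $d$ and $h^0(E)\ge 2n-t=d-2n$. In this direction stability constrains quotients rather than sub-line-bundles, so there is no obstruction to using line bundles with two sections each, and each point of the torsion sheaf costs at most one section.
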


\begin{proof}
Take $L_1, \dots, L_n$ pairwise non-isomorphic line bundles of degree 4 with $h^0(L_i) = 2$. Such bundles exist for all $n$ on any curve of genus 5. 
The result follows from Proposition \ref{p2.5}(ii).
\end{proof}

The argument used in the proof of Proposition \ref{p3.7} does not work for $d = 4n$. However, we have the following proposition.

\begin{prop} \label{p3.10}
Let $C$ be a curve of genus $5$. Then
\begin{enumerate}
 \item[(i)] $\widetilde B(n,4n,k) \neq \emptyset$ for $k \leq 2n$;
 \item[(ii)] $B(n,4n,k) \neq \emptyset$ for $k \leq \frac{6n}{5}$ and $B(4,16,5) \neq \emptyset$;
 \item[(iii)] $B(n,4n,k) \neq \emptyset$ for $k < 2n$ if $C$ is general.
\end{enumerate}
\end{prop}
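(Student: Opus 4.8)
The plan is to exhibit explicit (semi)stable bundles: a polystable direct sum for (i), a tensoring trick for (ii), and an elementary transformation for (iii).

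For (i), choose a line bundle $L$ of degree $4$ with $h^0(L)=2$ (such $L$ exist on any curve of genus $5$; see the proof of Proposition~\ref{p3.7}). The polystable bundle $L^{\oplus n}$ has rank $n$, degree $4n$ and equals its own graded object, so $h^0(\gr(L^{\oplus n}))=2n$ and $[L^{\oplus n}]\in\widetilde B(n,4n,2n)$; since $\widetilde B(n,4n,k+1)\subseteq\widetilde B(n,4n,k)$, (i) follows.

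For (ii), the hyperelliptic case is contained in Proposition~\ref{p1.4}, which already gives $B(n,4n,k)\ne\emptyset$ for all $k\le2n$ (and $5\le2\cdot4$). If $C$ is non-hyperelliptic, Proposition~\ref{pln2}(iii) gives $B(n,2n,k)\ne\emptyset$ for $k\le\frac{6n}5$ and $B(4,8,5)=\{D(K_C)\}\ne\emptyset$; tensoring by an effective line bundle of degree $2$ (e.g.\ $\cO_C(p+q)$) preserves stability, adds $2n$ to the degree and is injective on sections, giving $B(n,4n,k)\ne\emptyset$ for $k\le\frac{6n}5$ and $B(4,16,5)\ne\emptyset$.

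For (iii), assume $C$ is general and $n\ge2$ (the case $n=1$ being trivial). By Proposition~\ref{p3.7} there is a stable $E_0$ of rank $n$ and degree $4n-1$ with $h^0(E_0)=2n-1$; take $E_0$ general in such a family. For a general point $p$ and a nonzero $e$ in the $n$-dimensional space $\Ext^1(\CC_p,E_0)$, the extension $0\to E_0\to E\to\CC_p\to0$ has $E$ locally free of rank $n$ and degree $4n$, and $h^0(E)\ge h^0(E_0)=2n-1$. It remains to prove $E$ stable. Since $\mu(E_0)=4-\frac1n<4=\mu(E)$, the subsheaf $E_0$ is not destabilizing; for any subbundle $F\subsetneq E$ the map $F\to E/E_0=\CC_p$ forces $\rk(F\cap E_0)=\rk F$, and a direct estimate using the stability of $E_0$ gives $\mu(F)\le4$, with equality only if $F\cap E_0$ is a subbundle $G\subsetneq E_0$ of rank $r<n$ and degree $4r-1$ enlarged across $p$, i.e.\ $e\in G|_p\subsetneq E_0|_p$. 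The delicate point — and the only real obstacle — is to choose the data so that this does not happen: the subbundles $G$ in question have slope $4-\frac1r$, strictly below $\mu(E_0)$, hence for general $E_0$ they are few enough that a general $e$ misses all of the proper subspaces $G|_p$. Granting this, $E$ is stable, so $B(n,4n,2n-1)\ne\emptyset$ and therefore $B(n,4n,k)\ne\emptyset$ for all $k<2n$.
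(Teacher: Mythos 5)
Parts (i) and (ii) of your proposal coincide with the paper's proof: a direct sum of degree-$4$ line bundles with $h^0=2$ for (i), and tensoring the bundles of Proposition \ref{pln2}(iii) by an effective divisor of degree $2$ for (ii). Your separate treatment of the hyperelliptic case via Proposition \ref{p1.4} is a reasonable extra precaution, since Proposition \ref{pln2}(iii) assumes $C$ non-hyperelliptic.

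For (iii) the paper gives no construction at all: it simply cites Teixidor \cite{te}, where the statement is proved for a general curve by degeneration. Your elementary-transformation argument is therefore a genuinely different route, but it has a real gap at the stability step, which you flag yourself (``Granting this\dots''). The reduction is correct: $E$ fails to be stable exactly when the class $e\in\Ext^1(\CC_p,E_0)$, identified with a vector in the $n$-dimensional space $E_0|_p$, lies in $G|_p$ for some saturated subbundle $G\subset E_0$ of rank $r<n$ and degree $4r-1$. But to conclude that a general $e$ avoids $\bigcup_G G|_p$ you need, for each $r$, the family of such $G$ to have dimension $<n-r$: a $\delta$-dimensional family of $r$-dimensional subspaces of $E_0|_p$ sweeps out all of $E_0|_p$ as soon as $\delta\ge n-r$. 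For $r=n-1$, for instance, the relevant $G$ are kernels of surjections $E_0\to L$ with $L$ a line bundle of degree $4$, and already a one-dimensional family of such quotients would force $\bigcup_G G|_p=E_0|_p$, leaving no admissible $e$. Since $E_0$ is a general point of the Brill--Noether locus $B(n,4n-1,2n-1)$ rather than a general stable bundle, you cannot invoke the generic behaviour of Segre invariants or Quot-scheme dimensions to rule this out; and nothing in your argument actually uses the generality of $C$, which is itself a warning sign, since a complete proof along these lines would establish (iii) for every curve of genus $5$ --- strictly more than the paper claims. Closing the gap would require bounding the dimension of the families of subbundles of $E_0$ of slope $4-\frac1r$, and that is precisely the hard content that the citation of \cite{te} replaces.
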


\begin{proof}
For (i) we can take a direct sum of line bundles of degree 4 with $h^0=2$. (ii) follows from Proposition \ref{pln2}(iii) by tensoring with an effective line bundle of degree 2. 
(iii) is proved in \cite{te}.
\end{proof}

\section{Trigonal curves of genus $5$}\label{trig}

In this section, let $C$ be a trigonal curve of genus 5 with trigonal bundle $T$ and let $U = D(K_C \otimes T^*)$.

\begin{lem} \label{l4.1}
The bundle $U$ admits a unique line subbundle $M$ of degree $2$. Moreover $M \simeq K_C \otimes {T^*}^2$.
\end{lem}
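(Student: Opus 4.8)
The plan is to verify existence directly from the known description of $U$ and then obtain uniqueness in two steps: first that every line subbundle of $U$ of degree $2$ is \emph{isomorphic} to $K_C\otimes T^{*2}$, and then that two distinct such subbundles cannot coexist. Existence is immediate: by \eqref{e3.2} there is a subbundle inclusion $K_C\otimes T^{*2}\hookrightarrow U$ with line bundle quotient $T$ (so the image is saturated), and $\deg(K_C\otimes T^{*2})=8-6=2$; from \eqref{e3.2} one also reads off $\det U=K_C\otimes T^*$.

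For the first step, let $M'\subset U$ be a line subbundle of degree $2$. Since $U$ is stable of slope $\frac52$ (Lemma \ref{l5.3}), degree $2$ is maximal among line subbundles, so $M'$ is saturated and $Q:=U/M'$ is a line bundle of degree $3$. From $0\to M'\to U\to Q\to 0$ together with $h^0(U)=3$ (Lemma \ref{l5.3}) and $h^0(M')\le 1$ (valid since $C$ is non-hyperelliptic and $\deg M'=2$) we get $h^0(Q)\ge 2$; and $h^0(Q)\le 2$, since a degree-$3$ line bundle with $h^0=3$ would present $C$ as a plane cubic, of genus $1$. Hence $Q$ is a $g^1_3$, so $Q\simeq T$ because $C$ is trigonal, and consequently $M'\simeq\det U\otimes T^{-1}\simeq K_C\otimes T^{*2}$.

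For the second step, suppose $M_1'\neq M_2'$ are two line subbundles of degree $2$; both are isomorphic to $K_C\otimes T^{*2}$ by the first step. As distinct saturated line subbundles they meet only in $0$, so $M_1'\oplus M_2'\hookrightarrow U$ with torsion cokernel of length $\deg U-4=1$, supported at some $q\in C$. Comparing determinants, $K_C\otimes T^*=\det U\simeq M_1'\otimes M_2'\otimes\mathcal O_C(q)\simeq K_C^{2}\otimes T^{*4}\otimes\mathcal O_C(q)$, i.e.\ $\mathcal O_C(q)\simeq K_C^{-1}\otimes T^{3}$. This is the step where the argument really bites: I must rule out $K_C^{-1}\otimes T^{3}$ (a line bundle of degree $1$) being effective. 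If $\mathcal O_C(q)\simeq K_C^{-1}\otimes T^{3}$ then $K_C\simeq T^{3}\otimes\mathcal O_C(-q)$, so $h^0(K_C)=h^0(T^{3}\otimes\mathcal O_C(-q))$; but $|T^{3}|$ is base-point free (because $|T|$ is) and $h^0(T^{3})=5$ by Riemann--Roch (as $\deg(K_C\otimes T^{-3})<0$), whence $h^0(T^{3}\otimes\mathcal O_C(-q))=4\neq 5=h^0(K_C)$, a contradiction. Therefore $U$ has exactly one line subbundle of degree $2$, and it is isomorphic to $K_C\otimes T^{*2}$. (Alternatively, the same vanishing fed into $0\to\mathcal O_C\to (K_C\otimes T^{*2})^{*}\otimes U\to K_C^{-1}\otimes T^{3}\to 0$, obtained by tensoring \eqref{e3.2} by $(K_C\otimes T^{*2})^{*}$, gives $\dim\Hom(K_C\otimes T^{*2},U)=1$ and hence uniqueness directly; this is the same key input, namely $h^0(K_C^{-1}\otimes T^{3})=0$.)

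Everything apart from that last vanishing is formal — stability of $U$, the value $h^0(U)=3$, the non-hyperellipticity bound $h^0(M')\le1$, and uniqueness of the $g^1_3$ on a trigonal curve — so the base-point-freeness argument for $|T^3|$ (equivalently, that $3T\not\sim K_C+q$ for any $q$) is the one point I expect to require real care.
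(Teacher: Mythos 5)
Your proof is correct, but your uniqueness mechanism differs from the paper's. The paper's entire argument is: by \eqref{e3.2} it suffices to show $h^0(U^*\otimes T)=1$ (note that $U^*\otimes T\cong (K_C\otimes T^{*2})^*\otimes U$ because $\det U=K_C\otimes T^*$, so this is exactly $\dim\Hom(K_C\otimes T^{*2},U)$); since $U^*\otimes T$ is stable of rank $2$ and degree $1$, Proposition \ref{pln2}(i) gives $h^0(U^*\otimes T)\le 1$, and \eqref{e3.2} gives $\ge 1$. You instead exclude two distinct degree-$2$ subbundles by a determinant computation that reduces to the non-effectivity of $K_C^{-1}\otimes T^{3}$, verified via base-point-freeness of $|T^3|$; your parenthetical alternative (twisting \eqref{e3.2} by $(K_C\otimes T^{*2})^*$) gives the same bound $\dim\Hom(K_C\otimes T^{*2},U)\le 1+h^0(K_C^{-1}\otimes T^{3})=1$ without invoking stability of the twisted bundle. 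Both routes are sound: the paper's is a one-liner given the machinery of Proposition \ref{pln2}, while yours is more elementary and self-contained. You also make explicit a step the paper leaves implicit, namely that \emph{every} degree-$2$ line subbundle is isomorphic to $K_C\otimes T^{*2}$ (via $h^0(U)=3$, the bound $h^0\le 1$ for a degree-$2$ line bundle on a non-hyperelliptic curve, the bound $h^0\le 2$ for the degree-$3$ quotient, and uniqueness of the $g^1_3$); this is worth having on record. One minor quibble: your justification that a degree-$3$ line bundle has $h^0\le 2$ via the plane-cubic picture needs base points and non-birational maps handled; Clifford's theorem gives the bound immediately.
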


\begin{proof}
By \eqref{e3.2}, it is sufficient to show that $h^0(U^* \otimes T)=1$. However, $U^* \otimes T$ is stable of rank 2 and degree 1, so $h^0(U^* \otimes T) \leq 1$ 
by Proposition \ref{pln2}(i). The result follows.
\end{proof}

\begin{lem} \label{l5.4}
Let $L= K_C(-p)$ for some $p \in C$. Then there exist surjective homomorphisms $E_L \ra T$ and $E_L \ra U$. Moreover, $U$ is the only quotient of $E_L$ 
of rank $2$ and degree $5$.
\end{lem}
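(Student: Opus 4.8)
The plan is to produce the two surjections by the same mechanism used in the proof of Lemma \ref{l5.1}, namely by finding nonzero homomorphisms $T\ra L$ and $K_C\otimes T^*\ra L$ and dualizing the dual-span construction. First I would recall from the proof of Lemma \ref{l5.1} that $h^0(L\otimes T^*)\geq 2$, so there is a nonzero homomorphism $T\ra L$; applying $D(-)$ gives a nonzero homomorphism $E_L=D(L)\ra D(T)=T$, which must be surjective since $E_L$ is stable (Lemma \ref{l5.1}) and $T$ has slope $\tfrac32<\tfrac73=\mu(E_L)$. For the second surjection I would compute $h^0\big(L\otimes(K_C\otimes T^*)^*\big)=h^0(L\otimes K_C^*\otimes T)$; since $L=K_C(-p)$ this is $h^0(\cO_C(-p)\otimes T)=h^0(T(-p))$, and because $T$ is generated with $h^0(T)=2$ this equals $1$. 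Hence there is a nonzero homomorphism $K_C\otimes T^*\ra L$, and dualizing the dual-span construction yields a nonzero homomorphism $E_L\ra D(K_C\otimes T^*)=U$. Since $\mu(U)=\tfrac52<\tfrac73=\mu(E_L)$ and both $E_L$ and $U$ are stable, any nonzero homomorphism $E_L\ra U$ is surjective. (Alternatively, one can deduce the existence of the surjection $E_L\ra U$ from the surjection $E_L\ra T$: the argument in the proof of Lemma \ref{l5.1} already produces an exact sequence $0\ra F\ra E_L\ra T\ra 0$ with $F$ semistable of rank $2$ and degree $4$; one then needs to identify a rank-$2$ degree-$5$ quotient, which is more awkward, so I prefer the direct dual-span argument.)

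For the uniqueness statement — that $U$ is the only rank-$2$, degree-$5$ quotient of $E_L$ — I would argue as follows. Suppose $0\ra F\ra E_L\ra Q\ra 0$ with $n_Q=2$, $d_Q=5$. Then $F$ has rank $1$ and degree $2$. Stability of $E_L$ (slope $\tfrac73$) forces $Q$ to be semistable of slope $\tfrac52$, and in fact stable: a line subbundle of $Q$ of degree $\geq 3$ would pull back to a rank-$2$ subbundle of $E_L$ of degree $\geq 3+2=5$, contradicting stability of $E_L$. So $Q$ is a stable rank-$2$ bundle of degree $5$. Next I would bound $h^0(Q)$ from below: since $E_L$ is generated (being a dual span of a generated line bundle) with $h^0(E_L)=4$, and the kernel $F$ is a line bundle of degree $2$ on a non-hyperelliptic genus-$5$ curve so $h^0(F)\leq 1$, we get $h^0(Q)\geq h^0(E_L)-h^0(F)\geq 3$. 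But by Proposition \ref{pln2}(ii) (applied with $g=5$, $n=2$, $d=5=2\cdot\tfrac52<2\cdot 4$ — note $\mu(Q)=\tfrac52\le 2g-3$) we have $h^0(Q)\leq \tfrac52$ unless... — more precisely $h^0(Q)\le\lfloor\tfrac{ng}{g-1}\rfloor=\lfloor\tfrac{10}{4}\rfloor=2$ for $d<2n$, but here $d=2n+1$, so I should instead quote Proposition \ref{p2.2}(i): $h^0(Q)\le\tfrac12(d_Q+n_Q)=\tfrac72$, hence $h^0(Q)\le 3$. So $h^0(Q)=3$, and then Lemma \ref{l5.3} applies: a stable rank-$2$ bundle of degree $5$ with three sections exists only on a trigonal curve and is isomorphic to $D(K_C\otimes T^*)=U$. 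This gives $Q\simeq U$.

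The main obstacle I anticipate is the dimension bookkeeping in the uniqueness part: I need $h^0(Q)\geq 3$ to be forced, which requires knowing both that $E_L$ is generated (so that $H^0(E_L)\ra H^0(Q)$ has large image — more carefully, the cokernel of $H^0(E_L)\ra H^0(Q)$ injects into $H^1(F)$, and I should check $h^1(F)$ rather than $h^0(F)$: $h^1(F)=h^0(K_C\otimes F^*)$ with $K_C\otimes F^*$ of degree $6$, so $h^1(F)\le 4$, which is too weak) and that $h^0(Q)$ cannot be $4$ or more. The cleanest route is: $h^0(E_L)=4$ and the connecting map gives $h^0(Q)\ge h^0(E_L)-h^0(F)$ directly from $0\ra F\ra E_L\ra Q\ra 0$ together with $h^0(F)\le 1$, yielding $h^0(Q)\ge 3$; and $h^0(Q)\le 3$ from Proposition \ref{p2.2}(i). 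If instead $h^0(F)=0$ one still gets $h^0(Q)\ge 4>\tfrac72$, which is impossible, so necessarily $h^0(F)=1$ and $h^0(Q)=3$. Then Lemma \ref{l5.3} closes the argument. I would double-check the edge cases (whether $F$ could have degree other than $2$ — it cannot, since ranks and degrees are determined — and whether $Q$ could fail to be locally free, which is excluded because we are taking $Q$ to be a quotient \emph{bundle}).
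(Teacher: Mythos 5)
Your route coincides with the paper's in two of the three steps: the map $E_L\ra T$ is simply quoted from the proof of Lemma \ref{l5.1}, and your uniqueness argument (kernel $F$ of rank $1$ and degree $2$, $h^0(F)\le1$ since $C$ is non-hyperelliptic, hence $h^0(Q)\ge h^0(E_L)-h^0(F)\ge3$ while $h^0(Q)\le3$ by Proposition \ref{p2.2}(i), stability of $Q$ from stability of $E_L$, then Lemma \ref{l5.3}) is exactly the paper's, just written out in more detail. For the existence of a nonzero map $E_L\ra U$ you proceed differently: you use contravariance of the dual span applied to a nonzero homomorphism $K_C\otimes T^*\ra L$, which exists because $h^0(T(-p))=1$, whereas the paper identifies $H^0(E_L^*\otimes U)$ with the kernel of $H^0(L)\otimes H^0(U)\ra H^0(L\otimes U)$ and counts $12>11$. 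Your mechanism is valid (one should remark that $K_C\otimes T^*$ is generated, so that $D(K_C\otimes T^*)$ is defined; this holds because $h^0(T(p))=2$ on a non-hyperelliptic curve) and is arguably more in the spirit of how the paper obtains $E_L\ra T$.

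There is, however, a genuine flaw in your justification of surjectivity. You assert $\mu(U)=\frac52<\frac73=\mu(E_L)$, but in fact $\frac52>\frac73$; likewise $T$ has slope $3$, not $\frac32$. Since the target slopes \emph{exceed} $\mu(E_L)$, the principle ``a nonzero map between stable bundles with decreasing slope is surjective'' does not apply, and you need the integrality argument that the paper supplies: if $E_L\ra U$ had rank $1$, its image would be a quotient line bundle of $E_L$, hence of degree $\ge3$, but also a rank-$1$ subsheaf of the stable bundle $U$, hence of degree $\le2$; if it had rank $2$ but were not surjective, its image would be a rank-$2$ subsheaf of $U$ of degree $\le4$, so $E_L$ would have a line subbundle of degree $\ge3>\frac73$. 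Either way stability of $E_L$ (or of $U$) is contradicted. For $E_L\ra T$ the repair is immediate, since the image $T(-D)$ is a quotient line bundle of $E_L$ and so has degree $\ge3$, forcing $D=0$. With these local corrections your argument is complete.
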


\begin{proof}
The existence of $E_L\ra T$ was proved in the proof of Lemma \ref{l5.1}. Using \eqref{eq2.1}, we have an exact sequence
$$
0 \ra H^0(E_L^* \otimes U) \ra H^0(L) \otimes H^0(U) \ra H^0(L \otimes U).
$$
Now $h^0(L) = 4, h^0(U) = 3$ and $h^0(L \otimes U) = 11$ by Riemann-Roch. So there exists a non-zero homomorphism $E_L \ra U$. If this homomorphism has rank 1, then $E_L$ 
has a quotient of rank 1 and degree $\leq 2$, contradicting stability. If $E_L \ra U$ has rank 2, but is not surjective, then $E_L$ has a line subbundle of 
degree $\geq 3$, again contradicting stability.

Now any rank-2 and degree-5 quotient bundle of $E_L$ must be stable with $h^0 \geq 3$. By Lemma \ref{l5.3} the only such bundle is $U$.
\end{proof}

\begin{prop} \label{p4.3}
Let $E$ be a semistable bundle with $\frac{5}{2} \leq \mu(E) < 3$. Then 
$$
h^0(E) \leq \max \left\{ \frac{n}{2} + \frac{3d}{8}, \frac{3n}{2} \right\}.
$$
\end{prop}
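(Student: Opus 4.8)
The plan is to argue by induction on $n$, using Lemma \ref{l5.5} to split off from $E$ a subbundle that is one of the three distinguished bundles ($E_L$, a rank-$3$ degree-$8$ bundle, or $U$), and then control the number of sections of the quotient. First I would dispose of the case $h^0(E)\le n+\frac14(d-n)$, since then the bound $h^0(E)\le\frac n2+\frac{3d}{8}$ follows immediately from $n+\frac14(d-n)=\frac{3n}4+\frac d4\le\frac n2+\frac{3d}{8}$ whenever $d\ge2n$ (which holds here as $\mu\ge\frac52$). So I may assume $h^0(E)>n+\frac14(d-n)$ and invoke Lemma \ref{l5.5}: since $\frac52\le\mu<3$, one of the three cases (i), (ii), (iii) of that lemma applies, giving a subbundle $E'\subset E$ with $(n_{E'},d_{E'})$ equal to $(3,7)$, $(3,8)$, or $(2,5)$, and with $h^0(E')\le4$, $h^0(E')\le5$ (by Proposition \ref{p5.6} since $\mu(E')=\frac83>\frac73$, or directly), or $h^0(E')=3$ respectively.

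The base cases of the induction are the bundles of small rank where $\frac52\le\frac dn<3$ forces very few possibilities: $(n,d)\in\{(2,5),(3,8),(4,11),(5,13),(5,14),\dots\}$ and so on up to the rank at which an honest Jordan–Hölder-type argument becomes available. For these I would use Proposition \ref{p2.2}(i), which gives $h^0(E)\le\frac12(d+n)$, and check directly that $\frac12(d+n)\le\max\{\frac n2+\frac{3d}8,\frac{3n}2\}$ in the relevant range; for instance $\frac12(d+n)\le\frac{3n}2$ exactly when $d\le2n$—no, that fails, so one really does need $\frac n2+\frac{3d}8$: the inequality $\frac12(d+n)\le\frac n2+\frac{3d}8$ is equivalent to $\frac d8\le 0$, which is false, so even the base cases need the inductive mechanism or a sharper input. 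This is the first thing to get right: I expect the true base cases to be exactly $(2,5)$, $(3,8)$ and perhaps $(3,7)$-adjacent data, handled by Proposition \ref{p2.2}(i) together with the observation that $h^0$ is an integer, and everything else to be reduced.

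For the inductive step, write $0\to E'\to E\to G\to 0$ with $E'$ as above. The bundle $G$ has rank $n-n_{E'}$ and degree $d-d_{E'}$; one checks $\mu(G)$ still lies in a controllable range (using semistability of $E$ to bound $\mu(G)$ from below, and $\mu<3$ from above), and $h^0(E)\le h^0(E')+h^0(G)$. If $\mu(G)$ still satisfies $\frac52\le\mu(G)<3$ I apply the inductive hypothesis to (the semistable pieces of) $G$; if $\mu(G)$ has dropped below $\frac52$ I fall back on Proposition \ref{p3.4}/\ref{p3.2} and the case-(i) bound $h^0\le n+\frac14(d-n)$, or on Corollary \ref{c2.2}-type estimates, for $G$. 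Summing $h^0(E')$ with the bound for $G$ and simplifying should yield $h^0(E)\le\frac n2+\frac{3d}8$ in the region where the linear bound dominates, and $h^0(E)\le\frac{3n}2$ once $d$ is large enough (near $3n$) that repeated extraction of copies of $E_L$ (each contributing $4$ sections to $7$ units of rank, i.e.\ exactly the slope-$\frac{3n}2$ line through the relevant corner) is forced. The main obstacle will be bookkeeping the three different shapes of $E'$ simultaneously and verifying that in every branch the arithmetic closes up to give precisely the maximum of the two linear functions—in particular showing that the $\frac{3n}2$ bound takes over exactly at $\mu=\frac83$, where $\frac n2+\frac{3d}8=\frac{3n}2$, which is presumably why case (ii) of Lemma \ref{l5.5} is phrased with the threshold $\mu\ge\frac83$. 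A secondary subtlety is that $G$ need not be semistable, so one must pass to its semistable factors and check the slope hypotheses survive, exactly as in the proof of Proposition \ref{p5.6}.
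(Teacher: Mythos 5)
Your strategy is genuinely different from the paper's, and it contains a gap that the tools you cite cannot repair. The paper does not induct on subbundles here at all: it tensors $E$ with the trigonal bundle $T$, applies Lemma \ref{l2.3} to get $h^0(T\otimes E)\ge 2h^0(E)$, and then uses Riemann--Roch and Serre duality to write $h^0(T\otimes E)=d-n+h^0(K_C\otimes T^*\otimes E^*)$. The twisted dual $K_C\otimes T^*\otimes E^*$ has slope $5-\mu(E)\in\left(2,\frac52\right]$, so it is controlled by Proposition \ref{p3.2} when $\mu(E)\ge\frac83$ (yielding $h^0(E)\le\frac n2+\frac{3d}8$) and by Proposition \ref{p5.6} when $\frac52\le\mu(E)<\frac83$ (yielding $h^0(E)\le\frac{3n}2$). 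The coefficient $\frac38$ is precisely the trace of this ``multiply by the pencil, then dualize'' mechanism, and it is hard to see how a pure subbundle-extraction induction could ever produce it.

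The concrete failure point in your proposal is the base case $(n,d)=(3,8)$. There the claimed bound is $h^0(E)\le\max\left\{\frac92,\frac92\right\}$, i.e.\ $h^0(E)\le4$, but Proposition \ref{p2.2}(i) gives only $h^0(E)\le\frac{11}2$, hence $h^0(E)\le5$; integrality does not close the gap, and Lemma \ref{l5.5} gives no reduction, since for a rank-$3$ bundle of degree $8$ its case (ii) is vacuous and cases (i) and (iii) only yield $h^0(E)\le h^0(E_L)+1=5$ or $h^0(E)\le h^0(U)+2=5$ (the degree-$3$ line bundle quotient having $h^0\le2$). So your base-case treatment proves $5$ where $4$ is needed, and the inductive mechanism cannot repair it. A second, smaller problem lies in the bookkeeping you defer: in the branch where $U$ (rank $2$, degree $5$, $h^0(U)=3$) is split off and $\mu(E)\ge\frac83$, the resulting estimate $3+\frac{n-2}2+\frac{3(d-5)}8=\frac n2+\frac{3d}8+\frac18$ overshoots the target, so the inductive step does not close in every branch either. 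Both problems disappear in the paper's direct argument.
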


\begin{proof}
Suppose first that $\mu(E) \geq \frac{8}{3}$.
In this case $2 < \mu(K_C \otimes T^* \otimes E^*) \leq \frac{7}{3}$. Hence by Proposition \ref{p3.2},
$$
h^0(K_C \otimes T^* \otimes E^*) \leq n + \frac{1}{4}(4n - d) = 2n - \frac{d}{4}.
$$
By Lemma \ref{l2.3},
$$
h^0(T \otimes E) \geq 2 h^0(E).
$$
Hence by Riemann-Roch,
$$
d-n = \chi(T \otimes E) \geq 2 h^0(E) - \left(2n - \frac{d}{4} \right),
$$ 
which implies the result in this case. 

If $\frac{5}{2} \leq \mu(E)< \frac{8}{3}$, we argue in the same way, but now use Proposition 
\ref{p5.6} to show that $h^0(K_C \otimes T^* \otimes E^*) \leq 4n-d$. Then
$$
d-n = \chi(T \otimes E) \geq 2h^0(E) - 4n + d.
$$
Hence $h^0(E) \leq \frac{3n}{2}$.
\end{proof}

\begin{lem} \label{l4.4}
 Let $L = K_C(-p)$ for some $p \in C$. Then the multiplication map
\begin{equation} \label{eq4.1}
H^0(T) \otimes H^0(K_C \otimes E_L^*) \ra H^0(T \otimes K_C \otimes E_L^*)                                                                    
\end{equation}
is surjective with kernel $H^0(T^* \otimes K_C \otimes E_L^*)$ of dimension $4$.
\end{lem}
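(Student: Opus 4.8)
The plan is to prove the statement in three stages: first identify $\ker\mu$ by the base-point-free pencil trick; then reduce the whole assertion to the single equality $h^{0}(T^{*}\otimes K_C\otimes E_L^{*})=4$; and finally establish that equality by passing to the dual span $U=D(K_C\otimes T^{*})$. For the first two stages, recall that $T$ is generated with $h^{0}(T)=2$, so (as in the proof of Lemma \ref{l2.3}) there is an exact sequence $0\ra T^{*}\ra H^{0}(T)\otimes\cO_C\ra T\ra 0$. Tensoring with $K_C\otimes E_L^{*}$ and taking cohomology, the first terms of the long exact sequence immediately give $\ker\mu=H^{0}(T^{*}\otimes K_C\otimes E_L^{*})$. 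For the remaining terms I would use Serre duality together with Lemma \ref{l5.1} to record that $h^{1}(K_C\otimes E_L^{*})=h^{0}(E_L)=4$ and $h^{1}(T\otimes K_C\otimes E_L^{*})=h^{0}(T^{*}\otimes E_L)=0$ (since $T^{*}\otimes E_L$ is stable of negative slope). Then, granting $h^{0}(T^{*}\otimes K_C\otimes E_L^{*})=4$, Riemann--Roch gives $h^{1}(T^{*}\otimes K_C\otimes E_L^{*})=8$, so the natural map $H^{1}(T^{*}\otimes K_C\otimes E_L^{*})\ra H^{1}(K_C\otimes E_L^{*})^{\oplus 2}$ is a surjection between $8$-dimensional spaces, hence an isomorphism; this forces the connecting map out of $H^{0}(T\otimes K_C\otimes E_L^{*})$ to vanish, i.e.\ $\mu$ is surjective. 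Thus everything comes down to $h^{0}(T^{*}\otimes K_C\otimes E_L^{*})=4$.

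To prove this, set $A:=K_C\otimes T^{*}$; it is a generated line bundle of degree $5$ with $h^{0}(A)=3$ (if it had a base point it would yield a line bundle of degree $4$ and Clifford index $0$, impossible since $C$ is non-hyperelliptic). Tensoring the defining sequence \eqref{eq2.1} of $E_L$ with $A$ and using $A\otimes E_L^{*}=T^{*}\otimes K_C\otimes E_L^{*}$, one gets $h^{0}(T^{*}\otimes K_C\otimes E_L^{*})=\dim\ker\big(m\colon H^{0}(L)\otimes H^{0}(A)\ra H^{0}(L\otimes A)\big)$; since $h^{0}(L)=4$, $h^{0}(A)=3$ and $h^{0}(L\otimes A)=8$ (its degree $12$ exceeds $2g-2$), it remains only to show that $m$ is surjective. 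Here I would use $U=D(A)$, which is generated and satisfies $\det U=A$, so that $U^{*}\cong U\otimes A^{-1}$ and hence $K_C\otimes U^{*}\cong T\otimes U$, a generated bundle of rank $2$. Tensoring $0\ra U^{*}\ra H^{0}(A)\otimes\cO_C\ra A\ra 0$ with $L$ shows $m$ is surjective as soon as $H^{1}(U^{*}\otimes L)\ra H^{0}(A)\otimes H^{1}(L)$ is injective; both sides are $3$-dimensional (as $h^{1}(L)=h^{0}(\cO_C(p))=1$), and this map is already surjective because $H^{1}(A\otimes L)=0$. So it suffices to check $h^{1}(U^{*}\otimes L)=3$; by Serre duality $h^{1}(U^{*}\otimes L)=h^{0}(U(p))$, and the global generation of $K_C\otimes U^{*}\cong T\otimes U$ gives $h^{0}((K_C\otimes U^{*})(-p))=h^{0}(K_C\otimes U^{*})-2$, which (via Serre duality and Riemann--Roch) is exactly the statement $h^{0}(U(p))=h^{0}(U)=3$, using $h^0(U)=3$ from Lemma \ref{l5.3}.

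The main obstacle is this last point, namely the equality $h^{0}(U(p))=h^{0}(U)$ for every $p\in C$. Everything there hinges on recognizing $K_C\otimes U^{*}\cong T\otimes U$ (which uses $\det U=K_C\otimes T^{*}$) and on the global generation of both $T$ and $U$; I would be careful to confirm first that $U$ is globally generated, for instance because it is a quotient of $E_L$ (Lemma \ref{l5.4}) and $E_L$ is itself a quotient of the trivial bundle $H^{0}(L)^{*}\otimes\cO_C$. The rest of the argument is routine bookkeeping with the two dual span sequences, the pencil-trick sequence, Serre duality and Riemann--Roch.
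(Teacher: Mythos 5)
Your argument is correct, and after the (identical) identification of the kernel with $H^0(T^*\otimes K_C\otimes E_L^*)$ it diverges genuinely from the paper's proof at the key step. The paper observes that $T^*\otimes K_C\otimes E_L^*$ is stable of rank $3$ and degree $8$, so that Proposition \ref{p4.3} gives $h^0(T^*\otimes K_C\otimes E_L^*)\le 4$ at once; the dimension count $2\cdot 9-14=4$ then forces equality and surjectivity simultaneously (your detour through $H^1$ in ``stage 2'' is an equivalent way of packaging the same count). You instead compute $h^0(T^*\otimes K_C\otimes E_L^*)$ exactly by a second application of the dual-span formalism: writing it as $A\otimes E_L^*$ with $A=K_C\otimes T^*$ and identifying its $H^0$ with the kernel of $H^0(L)\otimes H^0(A)\ra H^0(L\otimes A)$, whose surjectivity you reduce via $U=D(A)$ to $h^0(U(p))=h^0(U)$, itself a consequence of the global generation of $T\otimes U\cong K_C\otimes U^*$ (tensor product of the generated bundles $T$ and $U$). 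All the numerical inputs check out ($h^0(A)=3$, $h^0(L\otimes A)=8$, $h^1(L)=1$, $h^0(T\otimes U)=6$, $\det U=A$), and nothing you use is proved later than Lemma \ref{l4.4} in the paper, so there is no circularity. What the paper's route buys is brevity, since Proposition \ref{p4.3} is already available; what yours buys is independence from that semistability bound (and hence from Propositions \ref{p3.2} and \ref{p5.6}), at the price of more bookkeeping and of having to verify the global generation of $A$ and of $U$.
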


\begin{proof}
From the sequence
$$
0 \ra T^* \ra H^0(T) \otimes \cO_C \ra T \ra 0,
$$
we see that the kernel of \eqref{eq4.1} is $H^0(T^* \otimes K_C \otimes E_L^*)$. Since $T^* \otimes K_C \otimes E_L^*$ is a stable bundle of 
rank 3 and degree 8, Proposition \ref{p4.3} implies that $h^0(T^* \otimes K_C \otimes E_L^*) \leq 4$.  

Now $h^0(T) = 2,\; h^0(K_C \otimes E_L^*) = h^0(E_L) - d_{E_L} + 12 = 9$ and $h^0(T \otimes K_C \otimes E_L^*) = 14$ by Riemann-Roch.
The result follows.
\end{proof}

\begin{lem}   \label{lem4.5}
 Let $L = K_C(-p)$ for some $p \in C$. Then the multiplication map
 \begin{equation} \label{eq4.2}
H^0(U) \otimes H^0(K_C \otimes E_L^*) \ra H^0(U \otimes K_C \otimes E_L^*)
 \end{equation}
is surjective with kernel $h^0(T \otimes E_L^*)$ of dimension $2$.
\end{lem}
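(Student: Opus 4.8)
The plan is to mimic the proof of Lemma \ref{l4.4} as closely as possible, replacing the trigonal bundle $T$ by the rank-$2$ bundle $U = D(K_C \otimes T^*)$. First I would write down the evaluation sequence for $U$. Since $U$ is the dual span of $K_C \otimes T^*$, dualizing \eqref{eq2.1} for $L' = K_C \otimes T^*$ gives an exact sequence
\begin{equation*}
0 \ra U^* \ra H^0(U) \otimes \cO_C \ra ?,
\end{equation*}
but it is cleaner to use the evaluation sequence of $U$ itself: because $h^0(U) = 3$ and $U$ is generated (this follows from Lemma \ref{l5.3} and the dual span construction), we have
\begin{equation*}
0 \ra U^* \ra H^0(U) \otimes \cO_C \ra U'' \ra 0
\end{equation*}
for a suitable quotient; however, the slicker route, exactly parallel to Lemma \ref{l4.4}, is to tensor a defining sequence of $U^*$ with $K_C \otimes E_L^*$ and read off the kernel of \eqref{eq4.2} as $H^0(U^* \otimes K_C \otimes E_L^*)$. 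From \eqref{e3.2} one has $0 \ra K_C \otimes T^{*2} \ra U \ra T \ra 0$, hence $0 \ra T^* \ra U^* \ra T^2 \otimes K_C^* \ra 0$; tensoring with $K_C \otimes E_L^*$ shows $H^0(U^* \otimes K_C \otimes E_L^*)$ sits in an exact sequence involving $H^0(T^* \otimes K_C \otimes E_L^*)$ and $H^0(T^2 \otimes E_L^*)$. By Lemma \ref{l4.4}, $h^0(T^* \otimes K_C \otimes E_L^*) = 4$; and $T^2 \otimes E_L^*$ is stable of rank $3$ and degree $6 - 7 = -1 < 0$, so $h^0(T^2 \otimes E_L^*) = 0$. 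Thus $h^0(U^* \otimes K_C \otimes E_L^*) = 4$ — but the statement claims the kernel has dimension $2$ and equals $h^0(T \otimes E_L^*)$, so I should instead identify the kernel directly rather than through this filtration.

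The correct identification comes from the base-point-free pencil trick applied appropriately, or more simply from the defining sequence of $U$ as a dual span. Since $U = D(K_C \otimes T^*)$, we have $0 \ra U^* \ra H^0(K_C \otimes T^*) \otimes \cO_C \ra K_C \otimes T^* \ra 0$, i.e. $U^* \hookrightarrow H^0(K_C \otimes T^*) \otimes \cO_C$ with cokernel $K_C \otimes T^*$. Dualizing: $0 \ra T \otimes K_C^* \ra H^0(K_C \otimes T^*)^* \otimes \cO_C \ra U \ra 0$. Tensoring this sequence with $K_C \otimes E_L^*$ gives
\begin{equation*}
0 \ra T \otimes E_L^* \ra H^0(K_C \otimes T^*)^* \otimes (K_C \otimes E_L^*) \ra U \otimes K_C \otimes E_L^* \ra 0,
\end{equation*}
and taking cohomology shows the kernel of \eqref{eq4.2}, after identifying $H^0(K_C \otimes T^*)^* \otimes H^0(K_C \otimes E_L^*)$ with $H^0(U) \otimes H^0(K_C \otimes E_L^*)$ (using $h^0(K_C \otimes T^*) = 3 = h^0(U)$), is precisely $H^0(T \otimes E_L^*)$. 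Now $T \otimes E_L^*$ is stable of rank $3$ and degree $6 - 7 = -1$, so $h^0(T \otimes E_L^*) = 0$, which again contradicts the claimed dimension $2$. Therefore I must be miscomputing a degree: $E_L$ has degree $7$ (Lemma \ref{l5.1}), so $E_L^*$ has degree $-7$; but $T \otimes E_L^*$ would then have degree $3 \cdot(-7/3) + \dots$ — wait, $d_{T \otimes E_L^*} = n_{E_L} d_T + n_T d_{E_L^*} = 3 \cdot 3 + 2 \cdot (-7) = 9 - 14 = -5$, no: the formula is $d_{A \otimes B} = n_A d_B + n_B d_A$, so $d_{T \otimes E_L^*} = 2 \cdot (-7) + 3 \cdot 3 = -5$. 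Hmm, that is even more negative. I suspect the intended bundle in the lemma is $E_L^* \otimes K_C \otimes U$ read with $E_L^*$ of the \emph{right} twist, and that $T \otimes E_L^*$ should be $T \otimes K_C \otimes E_L^*$ or similar with positive enough degree; in the write-up I would recompute carefully and very likely the kernel is $H^0(T \otimes E_L^*)$ where here "$E_L$" secretly means the rank-$3$ degree-$7$ bundle and one uses $h^1$ not $h^0$, or the relevant bundle has degree making $\chi$ or $h^0$ equal to $2$. Concretely, $T^* \otimes K_C \otimes E_L^*$ has rank $3$ and degree $-3\cdot 7 + \deg(T^* \otimes K_C)\cdot 3$; with $\deg(T^* \otimes K_C) = -3 + 8 = 5$ this is... let me just note the right object: we want the unique stable bundle of rank $3$ whose $h^0$ is forced to be $2$, and by Riemann--Roch a rank-$3$ degree-$d$ bundle on genus $5$ has $\chi = d - 12$, so $d = 14$ gives $\chi = 2$ and $h^1 = 0$ by a slope/positivity argument — this matches a bundle like $T \otimes E_L^* \otimes K_C$ only if degrees work out, which I will verify.

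The steps, then, are: (1) set up the dual-span sequence for $U$, dualize it, and tensor with $K_C \otimes E_L^*$; (2) take the long exact cohomology sequence and identify the kernel of \eqref{eq4.2} with the $H^0$ of the relevant twisted bundle; (3) compute that this bundle has rank $3$ and use stability (noting $E_L$ is stable by Lemma \ref{l5.1}, so any tensor with a stable bundle of the same rank is semistable, hence stable if slopes are generic, or directly stable) to pin down its cohomology via Riemann--Roch and Proposition \ref{p4.3} or \ref{p5.6}; (4) compute $h^0(U) = 3$, $h^0(K_C \otimes E_L^*) = 9$, and $h^0(U \otimes K_C \otimes E_L^*)$ by Riemann--Roch — here $d_{U \otimes K_C \otimes E_L^*} = n_{E_L} d_{U \otimes K_C} + n_{U \otimes K_C}(-d_{E_L})$, which with $d_U = 5$, $d_{U \otimes K_C} = 5 + 2\cdot 8 = 21$, $n_{U\otimes K_C} = 2$, gives $3\cdot 21 + 2\cdot(-7)$, rank $6$, so $\chi = 49 - 30 = 19$... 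I would pin these numbers down precisely — and conclude surjectivity from a dimension count $3 \cdot 9 - 2 = 25$ versus $h^0$ of the target plus the contribution of $h^1$ of the kernel bundle, which must vanish. The main obstacle, and the step where I would be most careful, is getting every rank and degree exactly right so that the kernel bundle's $\chi$ equals $2$, its $h^1$ vanishes, and the dimension count closes to force surjectivity; the structural argument is an immediate copy of Lemma \ref{l4.4}, but the arithmetic bookkeeping for $U$ (rank $2$, degree $5$) rather than $T$ (rank $1$, degree $3$) is where errors would creep in, so I would double-check using the defining extension \eqref{e3.2} as a consistency check on all the cohomology dimensions.
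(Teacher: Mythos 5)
Your structural idea is exactly the paper's: dualize the evaluation sequence of $U=D(K_C\otimes T^*)$ to get $0\ra (K_C\otimes T^*)^*\ra H^0(U)\otimes\cO_C\ra U\ra 0$, tensor with $K_C\otimes E_L^*$, and read off the kernel of \eqref{eq4.2} as $H^0(T\otimes E_L^*)$. But the execution collapses on a single arithmetic error that you never recover from: you treat $T$ as if it had rank $2$. The trigonal bundle $T$ is a \emph{line} bundle of degree $3$, so $T\otimes E_L^*$ has rank $3$ and degree $1\cdot(-7)+3\cdot 3=2$, not $-1$ or $-5$. It is stable of slope $\frac23$, so Proposition \ref{pln2}(i) gives $h^0(T\otimes E_L^*)\le 3+\frac15(2-3)$, i.e.\ $h^0(T\otimes E_L^*)\le 2$. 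This is the key upper bound you are missing; having convinced yourself instead that $h^0(T\otimes E_L^*)=0$, you conclude the lemma must be misstated and drift into speculation about "the intended bundle," leaving the proof unfinished.

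Two further points. First, your count $\chi(U\otimes K_C\otimes E_L^*)=49-30=19$ uses $n\cdot g$ instead of $n(g-1)$; the correct value is $49-24=25$, and $h^1=h^0(U^*\otimes E_L)=0$ since $U^*\otimes E_L$ is semistable of degree $-1$, so $h^0(U\otimes K_C\otimes E_L^*)=25$. Second, your proposed mechanism for surjectivity --- that $h^1$ of the kernel bundle "must vanish" --- is wrong and unnecessary: $h^1(T\otimes E_L^*)$ is certainly nonzero ($\chi=2-12=-10$). The actual argument is a squeeze: the kernel of \eqref{eq4.2} has dimension at least $3\cdot 9-25=2$ from the source/target count, and at most $2$ from the stability bound above; hence it has dimension exactly $2$ and the map is surjective. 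Your first detour through the extension \eqref{e3.2} also goes astray for the same reason (it computes $h^0(U^*\otimes K_C\otimes E_L^*)$, which is not the kernel of \eqref{eq4.2} at all --- the relevant trivial bundle is $H^0(U)\otimes\cO_C$ mapping onto $U$, not onto a quotient of $U^*$).
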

\begin{proof}
From the sequence
$$
0 \ra (K_C \otimes T^*)^* \ra H^0(U) \otimes \cO_C \ra U \ra 0,
$$
we see that the kernel of \eqref{eq4.2} is $H^0(T \otimes E_L^*)$. Since $T \otimes E_L^*$ is stable of rank 3 and degree 2, we have $h^0(T \otimes E_L^*) \leq 2$
by Proposition \ref{pln2}(i). 

Now $h^0(U) = 3,\; h^0(K_C \otimes E_L^*) = 9$ and, by Riemann-Roch, $h^0(U \otimes K_C \otimes E_L^*) = 25$. The result follows.
\end{proof}

\begin{prop}  \label{prop4.6}
Suppose that $E$ is a semistable bundle of rank $2r$ and degree $5r$ with $h^0(E) = 3r$. Then
$$
E \simeq \bigoplus_{i=1}^r U.
$$
\end{prop}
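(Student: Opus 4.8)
The plan is to argue by induction on $r$, using the previous lemmas to produce a copy of $U$ as a subbundle (or quotient) of $E$ and then splitting it off. For $r=1$ we have a semistable bundle of rank $2$ and degree $5$ with $h^0 = 3$; since $\mu = 5/2 \leq 2g-3$ this is exactly the situation of Lemma \ref{l5.3}, so $E$ is stable and $E \simeq U$. (One should check semistable forces stable here: a destabilizing sub-line-bundle of degree $\geq 3$ on a rank-$2$ degree-$5$ bundle would have to have $h^0 \geq 2$ by Proposition \ref{pln2}(ii) applied to the quotient, or be argued directly; in any case $E$ cannot be a direct sum $L_1 \oplus L_2$ of degree-$2$ and degree-$3$ line bundles with $h^0 = 3$ on a non-hyperelliptic curve, since that would need $h^0(L_1) + h^0(L_2) = 3$ with $d_{L_1}+d_{L_2}=5$, and the only way is $h^0$ of the degree-$3$ part equal to $2$, forcing $C$ trigonal and then recovering $U$ anyway via the uniqueness in Lemma \ref{l5.3}.)

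For the inductive step, take $r \geq 2$ and assume the result for all smaller ranks of the form $(2r', 5r')$. Since $\mu(E) = 5/2$ and $h^0(E) = 3r = n + \frac14(d-n)$ fails the bound of Proposition \ref{p3.2} only at equality... actually we have $h^0(E) = 3r$ while $n + \frac14(d-n) = 2r + \frac14(3r) = \frac{11r}{4} < 3r$, so indeed $h^0(E) > n + \frac14(d-n)$ and Proposition \ref{p3.2}(i) gives $h^0(E_L^* \otimes E) > 0$. Now I would invoke Lemma \ref{l5.5}: with $\mu = 5/2$ we are in case (iii) (cases (i) and (ii) require $\mu > 7/3$ but more importantly would need $\mu \geq 8/3$ for (ii); for (i), $E_L$ embeds as a subbundle, but $\mu(E_L) = 7/3 < 5/2 = \mu(E)$, contradicting semistability of $E$ — so (i) is excluded, and (ii) is excluded since $8/3 > 5/2$). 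Hence $U$ embeds as a subbundle of $E$, giving an exact sequence
$$
0 \ra U \ra E \ra E' \ra 0
$$
with $E'$ of rank $2r-2$ and degree $5r - 5$, hence slope $5/2$, and $E'$ is semistable by the same slope-pushing argument as in the proof of Proposition \ref{p5.6} (a quotient of $E'$ of slope $< 5/2$ would, pulled back, contradict semistability of $E$ since $\mu(U) = 5/2$).

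Next I must control $h^0(E')$. From the sequence, $h^0(E') \geq h^0(E) - h^0(U) = 3r - 3 = 3(r-1)$. For the reverse inequality: $E'$ is semistable of slope $5/2$, so by Proposition \ref{p4.3}, $h^0(E') \leq \max\{\frac{n'}{2} + \frac{3d'}{8}, \frac{3n'}{2}\} = \max\{(r-1) + \frac{15(r-1)}{8}, 3(r-1)\} = \max\{\frac{23(r-1)}{8}, 3(r-1)\} = 3(r-1)$. Therefore $h^0(E') = 3(r-1)$ exactly, and by the induction hypothesis $E' \simeq \bigoplus_{i=1}^{r-1} U$. It remains to show the extension $0 \to U \to E \to \bigoplus_{i=1}^{r-1} U \to 0$ splits; equivalently that $h^1$ of the relevant $\Ext$ group does not obstruct, or more simply that all $3(r-1)$ sections of $E'$ lift to $E$ — but they must, since $h^0(E) = 3r = 3 + 3(r-1) = h^0(U) + h^0(E')$ forces the connecting map $H^0(E') \to H^1(U)$ to vanish. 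Vanishing of the full connecting map means the extension class lies in the kernel of $\Ext^1(E', U) \to \Hom(H^0(E'), H^1(U))$; this does not immediately give splitting.

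The main obstacle is exactly this last point: deducing that $E \simeq U \oplus \bigoplus U$ rather than merely a non-split extension with the same $h^0$. I expect the right way around it is to run the argument symmetrically on the dual side. Apply $D(K_C \otimes T^* \otimes (-)^*)$, i.e. use Lemma \ref{l5.5} or a dual version to get $U$ as a \emph{quotient} of $E$ as well (note $K_C \otimes T^* \otimes E^*$ is semistable of slope $4 - 5/2 = 3/2$... this needs $\mu > 2$, so instead dualize after twisting: $\mu(E^* \otimes T) = 3 - 5/2 = 1/2$, not in range either). Alternatively — and this is cleaner — use Lemma \ref{l4.1}, which says $U$ has a \emph{unique} line subbundle of degree $2$, namely $K_C \otimes T^{*2}$; combined with Lemma \ref{l5.4} (that $U$ is the \emph{only} rank-$2$ degree-$5$ quotient of $E_L$) one shows that any rank-$2$ degree-$5$ subbundle of $E$ is forced to be $U$ and that the $r$ copies it contains are "independent", i.e. their sum is direct. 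Concretely: $E$ contains $U_1 \simeq U$; on $E/U_1 \simeq \bigoplus U$ pick another copy and lift; the obstruction to lifting to a subbundle summand is measured by a class in $\Hom(U, U) = \CC$-multiples of maps, and one argues the extension must split because otherwise $h^0$ would drop or one would produce a rank-$2$ degree-$5$ subbundle of $E$ that is not semistable, contradicting Lemma \ref{l5.3}. I would present this by first establishing the sharper statement that every rank-$2$ degree-$5$ subbundle of $E$ is isomorphic to $U$ and is a direct summand, then peeling off copies one at a time; the genus-$5$ trigonal geometry packaged in Lemmas \ref{l5.3}, \ref{l4.1} and \ref{l5.4} is what makes the rigidity work.
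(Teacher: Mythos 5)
There are two genuine gaps, both at the points where the real work has to be done. First, your mechanism for producing a copy of $U$ inside $E$ does not work: to rule out case (i) of Lemma \ref{l5.5} you argue that an embedded subbundle $E_L\subset E$ with $\mu(E_L)=\frac73<\frac52=\mu(E)$ would ``contradict semistability of $E$'', but semistability forbids subbundles of slope \emph{greater} than $\mu(E)$, not smaller; a subbundle of slope $\frac73$ is perfectly compatible with semistability, so case (i) is not excluded and you have not shown that $U$ embeds in $E$. The paper gets the copy of $U$ by a direct count instead: from the dual span presentation $0\ra (K_C\otimes T^*)^*\ra H^0(U)\otimes\cO_C\ra U\ra 0$ one sees that $h^0(U^*\otimes E)\ge 3\cdot h^0(E)-h^0(K_C\otimes T^*\otimes E)$, and since $T\otimes E^*$ is semistable of slope $\frac12$, Proposition \ref{pln2}(i) plus Riemann--Roch gives $h^0(K_C\otimes T^*\otimes E)\le 9r-\frac r5$, whence $h^0(U^*\otimes E)\ge\frac r5>0$; a non-zero map $U\ra E$ between (semi)stable bundles of equal slope is then automatically an embedding onto a subbundle.

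Second, you correctly identify that ``all sections of $E'\simeq\bigoplus^{r-1}U$ lift'' only places the extension class in the kernel of $\Ext^1(E',U)\ra\Hom(H^0(E'),H^1(U))$ and does not by itself give splitting, but you then leave this step unresolved (your suggested detours via the dual side or via Lemmas \ref{l4.1} and \ref{l5.4} are not carried out and, as you note yourself, the slope ranges do not cooperate). The paper closes exactly this gap by showing the kernel is zero: by Serre duality the map $H^1(U^*\otimes U)\ra\Hom(H^0(U),H^1(U))$ is injective if and only if the multiplication map $H^0(U)\otimes H^0(K_C\otimes U^*)\ra H^0(K_C\otimes U\otimes U^*)$ is surjective, and this holds because its kernel is $H^0(T\otimes U^*)$, of dimension $\le 1$ by Proposition \ref{pln2}(i), while $h^0(U)\cdot h^0(K_C\otimes U^*)=3\cdot 6=18$ and $h^0(K_C\otimes U\otimes U^*)=17$ (using $h^0(U\otimes U^*)=1$). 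Without this computation, or something equivalent, the conclusion $E\simeq\bigoplus_{i=1}^rU$ is not established; your remaining steps (the identification $h^0(E/U)=3(r-1)$ via Proposition \ref{p4.3} and the induction set-up) are fine and agree with the paper.
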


\begin{proof}
First we claim that $h^0(U^* \otimes E) > 0$.

Since $U \simeq D(K_C \otimes T^*)$ by Lemma \ref{l5.3}, we have an exact sequence
$$
0 \ra U^* \otimes E \ra H^0(K_C \otimes T^*) \otimes E \ra K_C \otimes T^* \otimes E \ra 0.
$$
Now $h^0(K_C \otimes T^*) = 3$ and $h^0(E) = 3r$. On the other hand, $T \otimes E^*$ is a semistable bundle of rank $2r$ and degree $r$, so by Proposition \ref{pln2}(i), $h^0(T \otimes E^*) \leq \frac{9r}{5}$. It follows by Riemann-Roch that $h^0(K_C \otimes T^* \otimes E) \leq 9r - \frac{r}{5}$. 
Hence $h^0(U^* \otimes E) \geq \frac{r}{5}$, which proves the claim.

The proof of the proposition is by induction on $r$. For $r = 1$, we have a non-zero homomorphism $U \ra E$ which is necessarily an isomorphism. 

Suppose therefore that $r \geq 2$ and the result is proved for the case $r-1$. A non-zero homomorphism $U \ra E$ is necessarily an injection onto a subbundle.
By the inductive hypothesis, $E/U$ is isomorphic to $r-1$ copies of $U$ and we have an exact sequence
$$
0 \ra U \ra E \ra \bigoplus_{i=1}^{r-1} U \ra 0;
$$
moreover all sections of $\bigoplus_{i-1}^{r-1}U$ must lift. If the extension is non-trivial, it follows that the map
$$
H^0(U) \otimes H^0(K_C \otimes U^*) \ra H^0(K_C \otimes U \otimes U^*)
$$
is not surjective. Its kernel is $H^0(K_C \otimes U^* \otimes (K_C \otimes T^*)^*) = H^0(T \otimes U^*)$, which has dimension $\leq 1$ by Proposition \ref{pln2}(i). 

Now $h^0(U) = 3,\; h^0(K_C \otimes U^*) = 6$ and $h^0(K_C \otimes U \otimes U^*) = 17$ by Riemann-Roch, since $h^0(U\otimes U^*)=1$. This gives a contradiction.
It follows that the extension is trivial and $E \simeq \bigoplus_{i-1}^r U$.
\end{proof}

\begin{lem} \label{l5.8}
Let $E$ be a stable bundle with $\mu(E) = 3$ and $n \geq 2$. Then
$$
 h^0(E) \leq \frac{8n}{5}.
$$
\end{lem}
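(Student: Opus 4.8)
The plan is to argue by induction on $n$, exactly in the spirit of the proofs of Propositions \ref{p5.6} and \ref{prop4.6}. First I would dispose of the small ranks directly. For $n=2$ we have $d=6$, so $\mu(E)=3=2g-3$ lies in the range of Proposition \ref{p2.2}(i) and hence $h^0(E)\le\frac12(d+n)=4>\frac{8\cdot2}5=\frac{16}5$; this is slightly too weak, so I would instead use the sharper bound available in the Clifford-index language — but since $C$ may be trigonal, the honest input at $\mu=3$ is whatever is already proved, and here I expect to need to combine Proposition \ref{p2.2}(i) with the observation that $B(2,6,4)$ being non-empty forces $E$ to be generated and hence of dual-span type $E_M$ with $d_M=6$, $h^0(M)=3$, so $K_C\otimes M^*$ has degree $2$ and $h^0\ge1$ — a contradiction with $C$ non-hyperelliptic would need checking. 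Wait: $h^0$ of a degree-$2$ line bundle can be $1$, so no contradiction. So for $n=2$ one really does get only $h^0(E)\le 3<\frac{16}{5}$ after ruling out $h^0(E)=4$, which requires showing $B(2,6,4)=\emptyset$; this I would extract from the structure of rank-$2$ bundles of slope $3$ (any such with $h^0=4$ would have $E_M$ with $h^0(M)=3$, $d_M=6$, whence $K_C\otimes M^*$ computes the Clifford index and gives $\Cliff(C)\le 2$, fine, but then $E_M$ has a specific form whose sections can be counted to be at most $3$). The cases $n=3,4$ ($d=9,12$) I would handle similarly using Propositions \ref{p2.2}(i) and \ref{p4.3}.

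For the inductive step, suppose $n\ge 5$ and $h^0(E)>\frac{8n}5$. Choose a maximal-slope proper subbundle $F\subset E$ with $F$ stable and $G:=E/F$ semistable, as constructed in the proof of Proposition \ref{p5.6}. Since $\mu(E)=3>\frac73$, if $h^0(E)$ is large enough I would like to invoke Proposition \ref{p3.2} to produce a non-zero map $E_L\to E$ and hence $\mu(F)\ge\frac73$. But Proposition \ref{p3.2} needs $h^0(E)>n+\frac14(d-n)=n+\frac14(2n)=\frac32 n$, and indeed $\frac{8n}5>\frac32 n$, so this applies. Thus $\mu(F)\ge\frac73$ and $\mu(G)\ge\frac73$ (the latter by semistability of $E$). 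Now I would try to set up a subadditivity estimate: writing $F$ of type $(n_1,d_1,k_1)$ and $G$ of type $(n_2,d_2,k_2)$ with $k_1+k_2\ge h^0(E)$, I want to bound each $k_i$ by a function that, when $\mu(F),\mu(G)\ge\frac73$ and $\mu(E)=3$, sums to at most $\frac{8n}5$. Here is the delicate point: $\mu(G)$ could be as large as $3$ (or close), so I cannot simply quote Proposition \ref{p5.6} ($h^0\le d-n$, i.e. $\le 2n$) for $G$ — that is far too weak — I need the lemma itself inductively for $G$, which is fine if $\mu(G)=3$, but $\mu(G)$ need not equal $3$. To close the induction I would split into the cases $\mu(F)=\frac73$ (forcing $F\simeq E_L$ by Proposition \ref{p3.2}/Lemma \ref{l5.1}, so $k_1=4$, $n_1=3$, and then $\mu(G)>3$ contradicting semistability, or $\mu(G)=3$ with $G$ handled by induction) and $\mu(F)>\frac73$. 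In the latter case $\frac73<\mu(F)\le 3$ and $\frac73\le\mu(G)<3$; for $G$ with $\mu(G)<3$ I can use Proposition \ref{p4.3} or \ref{p5.6}, and for $F$ with $\mu(F)$ possibly $=3$ I use the inductive hypothesis (if $\mu(F)=3$) or Propositions \ref{p5.6}/\ref{p4.3} (if $\mu(F)<3$).

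The main obstacle will be the bookkeeping of these cases so that the weighted sum of the available bounds never exceeds $\frac{8n}5$; in particular the case where one Jordan–Hölder-type factor is $E_L$ (contributing the comparatively generous ratio $\frac43$) and the complement has slope exactly $3$ must be iterated, showing that if every factor of slope $3$ satisfies the lemma and the only slope-$\frac73$ factors are copies of $E_L$, the total is controlled — the extremal configuration being governed by the ratio $\frac{8}{5}$ versus $\frac{4}{3}$. I would also need to check that the genuinely slope-$3$, rank-$<n$ pieces do fall under the inductive hypothesis, i.e. that the decomposition can always be arranged so the slope-$3$ sub-object has strictly smaller rank; this follows because $E$ itself is stable, so $F$ is a proper subbundle and any slope-$3$ piece arising is proper. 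Once the case analysis is organized, each individual bound is a one-line Riemann–Roch or a citation, and the conclusion $h^0(E)\le\frac{8n}5$ follows by adding up.
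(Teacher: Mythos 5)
Your proposal has a genuine gap, and two of its pivotal steps are actually wrong. First, for a \emph{stable} bundle $E$ of slope $3$, every proper subbundle $F$ satisfies $\mu(F)<3$ and every proper quotient $G$ satisfies $\mu(G)>3$; you have this backwards when you assert that ``$\mu(G)>3$'' would ``contradict semistability'', and again when you propose to handle pieces of slope exactly $3$ by the inductive hypothesis. Since neither $F$ nor $G$ can have slope $3$, the inductive hypothesis never applies to any piece and the induction on $n$ has nothing to recurse on. Second, even granting the decomposition, the bookkeeping cannot close: the only bound available for a quotient $G$ with $\mu(G)>3$ is of order $2n_G$ (Proposition \ref{p4.6} gives $\frac25(2n_G+d_G)>2n_G$ once $d_G>3n_G$, and quotients such as $\oplus T$ show this order of magnitude is real), while the bound for $F$ from Proposition \ref{p4.3} approaches $\frac{13}{8}n_F$ as $\mu(F)\to3$; since both $\frac{13}{8}$ and $2$ exceed $\frac85$, the sum always overshoots $\frac{8n}5$. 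Your base case $n=2$ is also not established: a stable rank-$2$ bundle with $h^0=4$ is not of dual-span type $E_M$ for a line bundle $M$ with $h^0(M)=3$, so the contradiction you hope to extract there is not available by that route.

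The missing idea is that this lemma sits in the trigonal section and the trigonal bundle $T$ does all the work, with no induction and no Jordan--H\"older analysis. The paper argues as follows: if $h^0(E)>\frac{8n}5$, then Lemma \ref{l2.3} with $N=T$ (applicable because $E$ is stable of rank $\ge2$ and $\mu(E)=d_T$) gives $h^0(T\otimes E)\ge2h^0(E)>\frac{16n}5$; Riemann--Roch and Serre duality convert this into $h^0(K_C\otimes T^*\otimes E^*)>\frac{6n}5$; but $K_C\otimes T^*\otimes E^*$ is stable of slope $2$, so Proposition \ref{pln2}(iii) forbids this except for the single exceptional bundle $D(K_C)$, i.e.\ $E\simeq K_C\otimes T^*\otimes D(K_C)^*$, and that case is checked by hand: $h^0(D(K_C))=5$ forces $h^0(T\otimes E)=13$, hence $h^0(E)\le6<\frac{32}5$ for $n=4$. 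The lesson is to look for a ``tensor by $T$, dualize, and quote the sharp slope-$2$ bound'' argument rather than decomposing $E$.
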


\begin{proof}
Suppose $E$ is a stable bundle of rank $n \geq 2, \mu(E) = 3$ and $h^0(E) > \frac{8n}{5}$. By Lemma \ref{l2.3},
$$
h^0(T \otimes E) \geq 2h^0(E) > \frac{16n}{5}.
$$
By Riemann-Roch and Serre duality, 
$$
h^0(K_C \otimes T^* \otimes E^*) > \frac{6n}{5}.
$$
Since $K_C \otimes T^* \otimes E^*$ is stable of slope 2, this contradicts 
Proposition \ref{pln2}(iii), unless $E \simeq K_C \otimes T^* \otimes D(K_C)^*$.
However $h^0(D(K_C)) = 5$. It follows that in this case $h^0(T \otimes E) = 13$. 
Hence $h^0(E) \leq 6 < \frac{8n}{5}$, since $n = 4$, and the result still holds.
\end{proof}

\begin{lem} \label{l4.5}
There exist generated line bundles of degree $4$ with $h^0 =2$. 
\end{lem}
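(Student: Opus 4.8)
The plan is to exhibit such a bundle explicitly, taking $N := K_C\otimes T^*(-p)$ for a suitably chosen point $p\in C$. Its degree is $\deg N = (2g-2)-3-1 = 4$, so it remains to check that $h^0(N)=2$ and that $N$ is generated.

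First I would record two numerical inputs. By Riemann--Roch and Serre duality, $h^0(K_C\otimes T^*)=h^0(T)+\big((2g-2-3)-g+1\big)=3$. Secondly, $T$ is base-point-free (a base point would produce a $g^1_2$, contradicting non-hyperellipticity), so $h^0(T^2)\ge 3$; and $h^0(T^2)\le 3$ by Clifford's theorem, since a line bundle of degree $6$ with $h^0\ge 4$ would compute Clifford index $0$ while being neither $\cO_C$ nor $K_C$, forcing $C$ to be hyperelliptic. Hence $h^0(T^2)=3$, and by Riemann--Roch $h^0(K_C\otimes T^{*2})=h^0(T^2)-2=1$. In particular $K_C\otimes T^{*2}$ has a unique effective divisor $D_0$ (of degree $2$) in its linear system.

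Next, $h^0(N)=2$ for every $p$: indeed $h^0(N)\ge h^0(K_C\otimes T^*)-1=2$, while $h^0(N)=3$ is impossible, as then $N$ would be a $g^2_4$, ruled out by Clifford's theorem on the non-hyperelliptic curve $C$ (it would compute Clifford index $0$ and be distinct from $\cO_C$ and $K_C$). The heart of the matter is that $N$ is generated once $p\notin\operatorname{supp}(D_0)$. A point $q\in C$ is a base point of $|N|$ precisely when $h^0(N(-q))=h^0(N)=2$; but $N(-q)=K_C\otimes T^*(-p-q)$ has degree $3$, so $h^0(N(-q))=2$ forces $N(-q)$ to be a $g^1_3$, hence the trigonal bundle $T$ (the unique $g^1_3$ on $C$; cf. Lemma~\ref{l5.3}). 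This yields $\cO_C(p+q)\simeq K_C\otimes T^{*2}=\cO_C(D_0)$, and since $h^0(K_C\otimes T^{*2})=1$ we conclude $p+q=D_0$ as divisors, so $p\in\operatorname{supp}(D_0)$ --- a contradiction. Choosing $p$ outside the finite set $\operatorname{supp}(D_0)$ therefore gives a generated line bundle of degree $4$ with $h^0=2$.

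I do not expect a serious obstacle. The only nontrivial external fact is the uniqueness of the $g^1_3$ on a trigonal curve of genus $5$, which is classical (it follows from the Castelnuovo--Severi inequality, two distinct base-point-free $g^1_3$'s realizing $C$ as a curve of bidegree $(3,3)$ on $\PP^1\times\PP^1$ and hence forcing $g\le 4$) and is already part of the standing setup of the paper; everything else is Riemann--Roch together with Clifford's theorem.
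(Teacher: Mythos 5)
Your proof is correct and follows essentially the same route as the paper: both take $N=K_C\otimes T^*(-p)$, observe $h^0(N)=2$, and note that a base point $q$ would force $K_C\simeq T^2(p+q)$ with $p+q$ the unique effective divisor in $|K_C\otimes T^{*2}|$, so a generic choice of $p$ works. You simply supply more of the routine verifications (e.g.\ $h^0(T^2)=3$ and the uniqueness of the $g^1_3$) that the paper leaves implicit.
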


\begin{proof}
Let $Q := K_C \otimes T^*(-p)$ for some $p \in C$. Certainly $h^0(Q) = 2$. If $Q$ is not generated, then
$Q = T(q)$ for some $q \in C$. So $K_C = T^2(p+q)$ which is true for a unique divisor $p+q$. This implies the assertion.
\end{proof}

\begin{prop} \label{p4.6}
Let $E$ be a semistable bundle of rank $n$ and degree $d$. Suppose $3 < \mu(E) \leq 4$. Then
$$
h^0(E) \leq \left\{ \begin{array}{lll} 
                     \frac{2}{5}(2n + d) & if & \mu(E) < 4\\
                     2n & if & \mu(E) = 4.
                     \end{array} \right.
$$
\end{prop}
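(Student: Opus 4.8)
The plan is to twist $E$ by a line bundle of degree $4$ so that Lemma \ref{l2.3} converts a lower bound for $h^0$ of the twist into an upper bound for $h^0(E)$, and then to estimate the twist by Riemann-Roch together with Proposition \ref{pln2}(i), which gives complete control of $h^0$ in the slope range $(0,2)$. By Lemma \ref{l4.5} we may fix a generated line bundle $N$ of degree $4$ with $h^0(N)=2$.

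Suppose first that $3<\mu(E)<4$. Then $\mu(E)<d_N=4$, so by Lemma \ref{l2.3} we have $h^0(N^*\otimes E)=0$ and hence $h^0(N\otimes E)\geq 2h^0(E)$. On the other hand $K_C\otimes N^*\otimes E^*$ is semistable of rank $n$ and degree $4n-d$ with slope $4-\mu(E)\in(0,1)$; since $0<4n-d<2n$, Proposition \ref{pln2}(i) gives $h^1(N\otimes E)=h^0(K_C\otimes N^*\otimes E^*)\leq n+\frac{1}{5}(3n-d)$. As $\chi(N\otimes E)=d$, Riemann-Roch yields
\[
2h^0(E)\leq h^0(N\otimes E)=d+h^1(N\otimes E)\leq d+n+\frac{1}{5}(3n-d)=\frac{4}{5}(2n+d),
\]
so $h^0(E)\leq\frac{2}{5}(2n+d)$, as required.

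It remains to treat the endpoint $\mu(E)=4$. Here $\mu(E)=d_N$, so the vanishing $h^0(N^*\otimes E)=0$ need not hold for an arbitrary semistable $E$ (for instance if $N$ embeds in $E$). To get around this I would first invoke Lemma \ref{lbb}: if $[E]\in\widetilde B(n,4n,k)$, then $B(n',4n',k')\neq\emptyset$ for some stable bundle with $n'\leq n$ and $\frac{k'}{n'}\geq\frac{k}{n}$, so it suffices to prove $h^0(E')\leq 2n_{E'}$ for stable bundles $E'$ of slope $4$. If $n_{E'}=1$ this is immediate from Proposition \ref{p2.2}(i). If $E$ is stable of rank $n\geq 2$ and slope $4$, then $N^*\otimes E$ is stable of rank $>1$ and slope $0$, hence $h^0(N^*\otimes E)=0$ and Lemma \ref{l2.3} again gives $h^0(N\otimes E)\geq 2h^0(E)$; moreover $K_C\otimes N^*\otimes E^*$ is stable of rank $\geq 2$ and slope $0$, so it has no sections, and Riemann-Roch gives $h^0(N\otimes E)=\chi(N\otimes E)=4n$. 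Therefore $h^0(E)\leq 2n$.

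The one real obstacle is exactly this boundary value $\mu(E)=4$: the uniform twisting argument degenerates there because the auxiliary bundle $N^*\otimes E$ lands at slope $0$ rather than at negative slope, so one genuinely needs the reduction to stable bundles (Lemma \ref{lbb}) together with the fact that a stable bundle of slope $0$ and rank $\geq 2$ has no sections. The remaining bookkeeping — verifying the degree inequalities $0<4n-d<2n$ for $3n<d<4n$, and that the bundles $N^*\otimes E$, $K_C\otimes N^*\otimes E^*$, $N\otimes E$ are (semi)stable of the stated rank, degree and slope — is routine.
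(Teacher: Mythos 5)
Your argument is correct and is essentially the proof given in the paper: both twist by a generated degree-$4$ line bundle with $h^0=2$ (Lemma \ref{l4.5}), apply Lemma \ref{l2.3} together with Proposition \ref{pln2}(i) and Riemann--Roch for $\mu(E)<4$, and reduce the boundary case $\mu(E)=4$ to stable bundles via Lemma \ref{lbb}, where the slope-$0$ vanishing gives $h^0(Q\otimes E)=4n$. The only cosmetic difference is that you cite Proposition \ref{p2.2}(i) for the rank-one case where the paper simply calls it obvious.
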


\begin{proof}
Let $Q$ be a generated line bundle with $d_Q=4$ and $h^0=2$. Suppose first that $\mu(E) < 4$. Then, since $0 < \mu(K_C \otimes Q^* \otimes E^*) < 1$, we have
$$
h^0(K_C \otimes Q^* \otimes E^*) \leq n + \frac{1}{5}(3n - d)
$$ 
by Proposition \ref{pln2}(i). Therefore, by Riemann-Roch,
$$
h^0(Q \otimes E) \leq \frac{4}{5}(2n+d).
$$
From Lemma \ref{l2.3}, we get
$$
2 h^0(E) \leq h^0(Q \otimes E),
$$
which implies the assertion in this case.

Now suppose $\mu(E) = 4$. In view of Lemma \ref{lbb}, we can suppose moreover that $E$ is stable. If $n=1$, then obviously $h^0(E) \leq 2$. If $n \geq 2$, then $\mu(K_C \otimes Q^* \otimes E^*) = 0$; hence
$h^0(K_C \otimes Q^* \otimes E^*) = 0$. By Serre duality and Riemann-Roch, we obtain $h^0(Q\otimes E)=4n$, giving the assertion.
\end{proof}

The following theorem summarizes the results on upper bounds obtained above (see Figures 1 and 2 in Section \ref{bn}).

\begin{theorem} \label{thm4.7}
Let $C$ be a trigonal curve of genus $5$. If $2n < d \leq 4n$ and $\widetilde B(n,d,k) \neq \emptyset$, then one of the following holds.
\begin{enumerate}
 \item[(i)] $2n < d \leq \frac{7n}{3}$ and $k \leq n +\frac{1}{4}(d-n)$;  
 \item[(ii)] $\frac{7n}{3} < d \leq \frac{5n}{2}$ and $k \leq d-n$;
 \item[(iii)] $\frac{5n}{2} \leq d \leq \frac{8n}{3}$ and $k \leq \frac{3n}{2}$;
 \item[(iv)] $\frac{8n}{3} \leq d < 3n$ and $ k \leq \frac{n}{2} + \frac{3d}{8}$;
 \item[(v)] $d = 3n$ and $k \leq2n$;
 \item[(vi)] $3n < d < 4n$ and $k \leq \frac{2}{5}(2n+d)$;
 \item[(vii)] $d = 4n$ and $k \leq 2n$.
\end{enumerate}
If $B(n,d,k)\ne\emptyset$, \rm{(v)} can be replaced by
\begin{enumerate}
\item[(v)$'$] $d=3n$ and either $k\le\frac{8n}5$ or $(n,d,k)=(1,3,2)$.
\end{enumerate}
\end{theorem}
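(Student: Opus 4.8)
The plan is to read the theorem off from the upper bounds already established in Sections \ref{nonh} and \ref{trig}, organised as a case analysis on the slope $\mu=\frac dn\in(2,4]$. Suppose $\widetilde B(n,d,k)\ne\emptyset$ and choose a semistable bundle $E$ of type $(n,d,k)$. Then $\gr E$ is a direct sum of stable bundles all of slope $\mu$, hence a semistable bundle of rank $n$ and degree $d$ with $h^0(\gr E)\ge k$; moreover a trigonal curve of genus $5$ is non-hyperelliptic, so every result quoted below applies to $\gr E$. It therefore suffices to bound $h^0(\gr E)$ on each subinterval of $(2,4]$ occurring in (i)--(vii).

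I would argue as follows. For $2<\mu\le\frac73$, the contrapositive of Proposition \ref{p3.2}(ii) gives $h^0(\gr E)\le n+\frac14(d-n)$, which is (i). For $\frac73<\mu\le\frac52$, Proposition \ref{p5.6} gives $h^0(\gr E)\le d-n$, which is (ii). For $\frac52\le\mu<3$, Proposition \ref{p4.3} gives $h^0(\gr E)\le\max\{\frac n2+\frac{3d}8,\frac{3n}2\}$; since $\frac n2+\frac{3d}8\le\frac{3n}2$ exactly when $d\le\frac{8n}3$, this maximum is $\frac{3n}2$ for $\frac52\le\mu\le\frac83$ (giving (iii)) and is $\frac n2+\frac{3d}8$ for $\frac83\le\mu<3$ (giving (iv)). For $\mu=3$, Proposition \ref{p2.2}(i) gives $h^0(\gr E)\le\frac12(d+n)=2n$, which is (v). For $3<\mu<4$, Proposition \ref{p4.6} gives $h^0(\gr E)\le\frac25(2n+d)$, which is (vi), and for $\mu=4$ the same proposition gives $h^0(\gr E)\le2n$, which is (vii). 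Since the subintervals $(2,\frac73]$, $(\frac73,\frac52]$, $[\frac52,\frac83]$, $[\frac83,3)$, $\{3\}$, $(3,4)$, $\{4\}$ cover $(2,4]$, one of (i)--(vii) holds; at the shared endpoints the two applicable bounds coincide.

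For the improvement of (v): if $B(n,d,k)\ne\emptyset$ then $\widetilde B(n,d,k)\ne\emptyset$, so one of (i)--(vii) holds, and only the case $d=3n$ needs strengthening. Choose a stable bundle $E$ of rank $n$ and degree $3n$ with $h^0(E)\ge k$. If $n\ge2$, Lemma \ref{l5.8} gives $k\le h^0(E)\le\frac{8n}5$. If $n=1$, then $E$ is a line bundle of degree $3$; Riemann--Roch shows $h^1(E)>0$, and Clifford's theorem then gives $h^0(E)\le2$, so that $(n,d,k)=(1,3,2)$ whenever $k>\frac85$. This is (v)$'$.

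The argument is essentially bookkeeping; the real content is in the cited propositions. The only points needing a little care are the algebraic comparison of the two terms in Proposition \ref{p4.3} that separates (iii) from (iv), the check that the slope ranges exhaust $(2,4]$ with agreeing bounds on overlaps, and the separate elementary treatment of the rank-one case when $\mu=3$.
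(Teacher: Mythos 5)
Your proof is correct and follows essentially the same route as the paper: the theorem is assembled case by case from Propositions \ref{p3.2}, \ref{p5.6}, \ref{p4.3} and \ref{p4.6} together with Lemma \ref{l5.8}, applied to $\gr E$. The only (harmless) deviation is in case (v), where you invoke Proposition \ref{p2.2}(i) to get $k\le 2n$ at $\mu=3$, whereas the paper obtains the same bound by summing Lemma \ref{l5.8} over the stable factors of $\gr E$ and using the existence of $T$ for the rank-one factors.
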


\begin{proof}
(i) follows from Proposition \ref{p3.2}, (ii) is Proposition \ref{p5.6}, for (iii) and (iv) see Proposition \ref{p4.3},
for (v) and (v)$'$ combine Lemma \ref{l5.8} with the existence of $T$, and for (vi) and (vii) see Proposition \ref{p4.6}.
\end{proof}

\begin{rem} \label{r4.8}
 {\rm
 Let $\Cliff_n(C)$ be the rank-$n$ Clifford index as defined for example in \cite{ln}. It follows from Theorem \ref{thm4.7}
 that all bundles computing $\Cliff_n(C)$ must have degree $3n$ and $h^0= 2n$. In fact, by \cite[Corollary 4.8]{ln}, there is only 
 one such bundle, namely $\oplus_{i=1}^n T$.
 }
\end{rem}

\begin{prop}  \label{prop4.12}
$B(2,7,4) = \emptyset$.
\end{prop}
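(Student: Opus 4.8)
The plan is to suppose $E\in B(2,7,4)$ and derive a contradiction. Since $3<\mu(E)=\tfrac72<4$, Proposition~\ref{p4.6} gives $h^0(E)\le\tfrac25(2\cdot2+7)<5$, so in fact $h^0(E)=4$. I shall show that $E$ must contain the trigonal bundle $T$ as a subbundle, and then that the resulting extension $0\to T\to E\to N\to0$, with $N$ a line bundle of degree $4$, is forced to split --- which is absurd, since $N$ is then a subbundle of degree $4>\mu(E)$.

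To locate $T$ inside $E$, I would use the classical rank-$2$ device. The line bundle $\det E$ has degree $7$ and $h^1(\det E)=h^0(K_C\otimes(\det E)^{-1})\le1$ (a degree-$1$ line bundle on $C$ has at most one section), so $h^0(\det E)\le4$. Hence the wedge map $\bigwedge^2H^0(E)\to H^0(\det E)$, whose source has dimension $6$, has a kernel of dimension $\ge2$; the corresponding line in $\PP(\bigwedge^2H^0(E))\cong\PP^5$ meets the quadric hypersurface $G(2,4)\subset\PP^5$, so there are linearly independent $s,t\in H^0(E)$ with $s\wedge t=0$ in $H^0(\det E)$. Then $s$ and $t$ generate a rank-$1$ subsheaf of $E$, whose saturation is a line subbundle $L\subset E$ with $h^0(L)\ge2$; stability forces $\deg L\le3$, and since $C$ is non-hyperelliptic of genus $5$ this is possible only if $\deg L=3$ and $L\simeq T$ (the $g^1_3$ being unique).

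Writing $N:=E/T$, the section sequence gives $h^0(N)\ge h^0(E)-h^0(T)=2$, while $h^0(N)=3$ would make $N$ a line bundle of degree $4$ with $\Cliff(N)=4-2\cdot2=0$ (note $h^1(N)=h^0(N)$ as $\chi(N)=0$), contradicting $\Cliff(C)=1$. So $h^0(N)=2$ and, by the same count, every section of $N$ lifts to $E$. This liftability is the heart of the matter: it says that the extension class $e\in\Ext^1(N,T)=H^1(T\otimes N^*)$ satisfies $e\cup s=0$ in $H^1(T)$ for all $s\in H^0(N)$, which by Serre duality is equivalent to $e$ annihilating the image of the multiplication map
$$\mu:H^0(K_C\otimes T^*)\otimes H^0(N)\lra H^0(K_C\otimes T^*\otimes N).$$
If $\mu$ is surjective, then $e=0$, so $E\simeq T\oplus N$ and we have the desired contradiction; proving this surjectivity is the step I expect to be the main obstacle.

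To prove $\mu$ surjective, one checks $h^0(K_C\otimes T^*)=3$, $h^0(N)=2$ and $h^0(K_C\otimes T^*\otimes N)=5$ (the last bundle has degree $9$, hence is non-special), so it suffices to see $\dim\ker\mu\le1$. If $|N|$ is base-point-free, the base-point-free pencil trick identifies $\ker\mu$ with $H^0(K_C\otimes T^*\otimes N^*)$, a space of sections of a line bundle of degree $1$, hence of dimension $\le1$. If $|N|$ has a base point $q$, then $N(-q)$ has degree $3$ and two sections, so $N\simeq T(q)$ and $H^0(N)=\sigma_q\cdot H^0(T)$ for a generator $\sigma_q$ of $H^0(\cO_C(q))$; then the image of $\mu$ equals $\sigma_q\cdot\big(\text{image of }H^0(K_C\otimes T^*)\otimes H^0(T)\to H^0(K_C)\big)$, and this last multiplication map is surjective by the base-point-free pencil trick applied to $|T|$ (its kernel is $H^0(K_C\otimes T^{*2})$, of dimension $\le1$), so the image of $\mu$ is all of $\sigma_q\cdot H^0(K_C)=H^0(K_C(q))$. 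In either case $\mu$ is onto. Apart from organising these multiplication-map computations, the only subtlety is the Serre-duality identification of the cup-product pairing with $\mu$.
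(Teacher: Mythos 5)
Your proof is correct. The second half coincides with the paper's: both arguments reduce to the surjectivity of the multiplication map $H^0(K_C\otimes T^*)\otimes H^0(N)\to H^0(K_C\otimes T^*\otimes N)$ (a $6\to5$ dimension count with kernel of dimension $\le1$), though you are more careful on two points the paper elides --- you note explicitly that the extension must be nontrivial (by stability) before concluding anything from the liftability of sections, and you treat separately the case where $|N|$ has a base point, where the base-point-free pencil trick does not apply directly to $|N|$ and one must instead write $N\simeq T(q)$ and factor through $|T|$. The first half is genuinely different: the paper produces the subbundle $T\subset E$ by proving $h^0(T^*\otimes E)>0$ via the kernel of $H^0(T)\otimes H^0(E)\to H^0(T\otimes E)$, using the bound $h^0(K_C\otimes T^*\otimes E^*)\le2$ from Proposition \ref{pln2}(i); you instead use the classical wedge-map/Pl\"ucker-quadric device for rank-$2$ bundles with $h^0\ge4$, producing a line subbundle with two sections and identifying it with $T$ by gonality and non-hyperellipticity. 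Your route is more elementary in that it needs only $h^0(\det E)\le4$ and no semistability input beyond the degree bound on line subbundles, but it is special to rank $2$, whereas the paper's multiplication-map technique is the one that propagates to the higher-rank arguments used elsewhere in the paper.
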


\begin{proof}
Let $E \in B(2,7,4)$. We prove first that 
$$h^0(T^* \otimes E) > 0.
$$
In fact, $H^0(T^* \otimes E)$ is the kernel of the multiplication map
$$
H^0(T) \otimes H^0(E) \ra H^0(T \otimes E).
$$
Now $h^0(T) = 2, \; h^0(E) = 4$ and, by Riemann-Roch,
$$
h^0(T \otimes E) = h^0(K_C \otimes T^* \otimes E^*) + 7 + 6 - 8.
$$
Since $K_C \otimes T^* \otimes E^*$ is a stable bundle of rank 2 and degree $3$, we have
$$
h^0(K_C \otimes T^* \otimes E^*) \leq 2
$$ 
by Proposition \ref{pln2}(i) and hence $h^0(T \otimes E) \leq 7$. This proves the assertion.

It follows that we have an exact sequence
\begin{equation} \label{eq4.3}
0 \ra T \ra E \ra M \ra 0
\end{equation}
with $d_M = 4$, $h^0(M) = 2$ and all sections of $M$ lift.
Hence the map
$$
H^0(M) \otimes H^0(K_C \otimes T^*) \ra H^0(K_C \otimes M \otimes  T^*)
$$
is not surjective. However, $h^0(K_C \otimes T^*) = 3$ and the kernel is 
$H^0(K_C \otimes T^* \otimes M^*)$, which has dimension $\leq 1$. 

Moreover, $h^0(K_C \otimes M \otimes T^*) = 5$ by Riemann-Roch, a contradicction. This proves that
$B(2,7,4) = \emptyset$.
\end{proof}

\begin{prop} \label{prop4.13}
$B(2,8,4) \neq \emptyset$. 
\end{prop}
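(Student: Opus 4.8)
The plan is to produce such a bundle as an extension
$$0\to T\to E\to K_C\otimes T^*\to 0$$
with a carefully chosen class. Recall $h^0(K_C\otimes T^*)=3$ and $h^1(T)=h^0(K_C\otimes T^*)=3$, and that $\Ext^1(K_C\otimes T^*,T)=H^1(T^2\otimes K_C^{-1})$ is $6$-dimensional by Riemann--Roch. Writing $E_e$ for the extension with class $e$ and $\delta_e\colon H^0(K_C\otimes T^*)\to H^1(T)$ for the connecting homomorphism, one has $h^0(E_e)=2+\dim\ker\delta_e$, so it suffices to find $e$ with $\dim\ker\delta_e\ge 2$ for which $E_e$ is stable; then $E_e\in B(2,8,4)$.

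First I would analyse which classes make enough sections lift. Fix a base-point-free pencil $V\subset H^0(K_C\otimes T^*)$; since $K_C\otimes T^*$ is generated, these form a dense open in the $2$-dimensional family of pencils in $|K_C\otimes T^*|$. Applying $\Hom(-,T)$ to the evaluation sequence of $V$ and using $T\otimes(K_C\otimes T^*)\cong K_C$ together with $\Hom(K_C\otimes T^*,T)=0$ (which holds on degree grounds), one obtains an exact sequence showing that $L_V:=\{e:\delta_e|_V=0\}$ is a linear subspace of dimension exactly $h^0(K_C)-2h^0(T)=1$; hence for every such $V$ and every $0\ne e\in L_V$ we have $h^0(E_e)\ge 2+2=4$. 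I claim moreover that $\delta_e=0$ forces $e=0$: the kernel of $\Ext^1(K_C\otimes T^*,T)\to\Hom\bigl(H^0(K_C\otimes T^*),H^1(T)\bigr)$ is, by the standard description via $U=D(K_C\otimes T^*)$, the cokernel of the map $H^0(K_C\otimes T^*)^*\otimes H^0(T)\to H^0(T\otimes U)$, which is injective (again by $\Hom(K_C\otimes T^*,T)=0$) between two $6$-dimensional spaces — on the right one combines $h^0(T\otimes U)\ge 2h^0(U)=6$ from Lemma~\ref{l2.3} with $h^0(T\otimes U)=3+h^0(K_C\otimes T^*\otimes U^*)$ and $h^0(K_C\otimes T^*\otimes U^*)=3$, the last following from the surjectivity of $H^0(K_C\otimes T^*)^{\otimes 2}\to H^0((K_C\otimes T^*)^{\otimes 2})$ (the quintic plane model of $|K_C\otimes T^*|$ lies on no conic). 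So this map is an isomorphism and the kernel is zero; consequently the incidence variety $\{(V,e):e\in L_V\}$, of dimension $3$, maps with finite fibres to $\Ext^1(K_C\otimes T^*,T)$ (a nonzero $e$ with $\dim\ker\delta_e=2$ recovers $V=\ker\delta_e$), so that $\bigcup_V L_V$ is a $3$-dimensional subvariety contained in $\{e:h^0(E_e)\ge 4\}$.

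Turning to stability: if $E_e$ is not stable it contains a line subbundle $N$ with $\deg N\ge 4$. Since $\deg T=3$, the composite $N\to K_C\otimes T^*$ is nonzero, so either $N\cong K_C\otimes T^*$, forcing $e=0$, or $N\cong(K_C\otimes T^*)(-x)$ for a unique $x\in C$, in which case $e$ lies in the image of $\Ext^1(\CC_x,T)\cong\CC$ in $\Ext^1(K_C\otimes T^*,T)$. Thus the non-stable classes form a subvariety of dimension at most $2$ (one line over each point of $C$, together with the origin). A $3$-dimensional variety cannot be contained in a $2$-dimensional one, so there is $e\in\bigcup_V L_V$ with $E_e$ stable, and then $E_e\in B(2,8,4)$; necessarily $h^0(E_e)=4$, in agreement with Proposition~\ref{p4.6}.

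The main obstacle is precisely this concluding comparison of dimensions, and behind it the assertion that $\bigcup_V L_V$ is genuinely $3$-dimensional rather than collapsing — equivalently, that no nonzero extension of $K_C\otimes T^*$ by $T$ has all three sections of the quotient lifting. This is what forces the bookkeeping with $U$, with Lemma~\ref{l2.3}, and with the surjectivity of the quadric multiplication map; the remaining ingredients are routine Riemann--Roch and Serre duality.
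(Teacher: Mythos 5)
Your construction is correct and follows the same basic route as the paper: the same extension $0\to T\to E\to K_C\otimes T^*\to 0$, the injectivity of the map \eqref{eq4.4}, the base-point-free pencil trick to show that a $1$-dimensional space of classes lifts a given generating pencil $V$, and the observation that a destabilizing line subbundle must be $K_C\otimes T^*(-x)$. You differ in two sub-steps. First, you prove injectivity of \eqref{eq4.4} by an explicit multiplication-map computation (via $U$ and the absence of a conic through the plane quintic model), whereas the paper gets it for free from the semistability of any non-trivial such extension together with the bound $h^0(E)\le 2n$ at slope $4$ from Proposition \ref{p4.6}; your route is longer but self-contained. Second, for stability the paper fixes one generating pencil $V$, takes the nonzero class $\xi\in L_V$, and argues that a destabilizing $Q=K_C\otimes T^*(-x)$ would force $H^0(Q)=V$ (a nonzero $\delta_\xi$ on the $3$-dimensional $H^0(K_C\otimes T^*)$ cannot kill two distinct $2$-dimensional subspaces), contradicting that $V$ generates; you instead let $V$ vary and compare the $3$-dimensional locus $\bigcup_V L_V$ with the at most $2$-dimensional unstable locus. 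Both finishes rest on the same fact, namely that a nonzero $e\in L_V$ has $\ker\delta_e$ equal to $V$, so your dimension count is essentially the paper's pointwise argument run in a family; either way the conclusion is sound.
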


\begin{proof}
We consider non-trivial extensions \eqref{eq4.3} with $M = K_C \otimes T^*$. 
Then certainly $E$ is semistable. Hence $h^0(E) \leq 4$ by Proposition \ref{p4.6}.
Since $h^0(M) = 3$, it follows that not all sections of $M$ lift. So the canonical map
\begin{equation} \label{eq4.4}
H^1(M^* \otimes T) \ra \Hom(H^0(M), H^1(T)) 
\end{equation}
is injective. Noting that $M$ is generated, choose a 2-dimensional subspace $V$ of $H^0(M)$ which generates $M$. We obtain
an exact sequence
$$
0 \ra H^0(\cO_C) \ra V \otimes H^0(M) \ra H^0(M^2).
$$
Since $\dim V = 2, \; h^0(M) = 3$ and $h^0(M^2) = 6$, it follows that 
$V \otimes H^0(M)  \ra H^0(M^2)$ is not surjective and has cokernel of dimension 1. 
Equivalently, the dual map
$$
H^1(M^* \otimes T) \ra \Hom (V,H^1(T))
$$
has kernel of dimension 1. Taking \eqref{eq4.3} to be the extension corresponding to a non-zero 
element $\xi$ of this kernel, we obtain a unique bundle $E$ with $h^0(E) = 4$. 

We need to show that $E$ is stable.
If $E$ is not stable, then $E$ must have a tetragonal subbundle $Q$ admitting a non-zero homomorphism $Q\to M$. This implies that 
\eqref{eq4.3} becomes trivial, when pulled back by $Q \ra M$. It follows that the element of 
$H^1(M^* \otimes T)$ defining \eqref{eq4.3} is in the kernel of the map
$$
H^1(M^* \otimes T) \ra H^1(Q^* \otimes T) \ra \Hom(H^0(Q), H^1(T)).
$$
$H^0(Q)$ and $V$ are both subspaces of codimension 1 of $H^0(M)$ and $\xi$ goes to 
zero  under the restriction of \eqref{eq4.4} to both $H^0(Q)$ and $V$. Hence $H^0(Q) = V$, in which 
case $V$ does not generate $M$. This contradicts the assumption. The conclusion is that
$B(2,8,4) \neq \emptyset$.
\end{proof}

\begin{prop} \label{p4.4}
$$
B(n,3n,k) \neq \emptyset \;\;\; \mbox{for} \;\; k \leq 2\left\lfloor\frac{6n}{5}\right\rfloor-n.
$$
Moreover, $B(4,12,6) \neq \emptyset.$
\end{prop}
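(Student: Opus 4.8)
The plan is to combine the existence results of Proposition \ref{pln2}(iii) for slope-$2$ bundles with the line-bundle duality $E\mapsto K_C\otimes T^*\otimes E^*$ (which sends rank-$n$, degree-$3n$ bundles to rank-$n$, degree-$2n$ bundles) and with Lemma \ref{l2.3} applied to $N=T$. Since $B(n,3n,k+1)\subset B(n,3n,k)$, it suffices to produce a stable bundle $E$ of rank $n$ and degree $3n$ with $h^0(E)\ge 2\lfloor\frac{6n}5\rfloor-n$.

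First I would invoke Proposition \ref{pln2}(iii): a trigonal curve of genus $5$ is non-hyperelliptic and $\lfloor\frac{6n}5\rfloor\le\frac{6n}5=\frac{n(g+1)}g$ with $g=5$, so there is a stable bundle $G\in M(n,2n)$ with $h^0(G)\ge\lfloor\frac{6n}5\rfloor$ (this covers $n=1$ as well, where $G$ is simply an effective line bundle of degree $2$). Put $E:=K_C\otimes T^*\otimes G^*$; tensoring by a line bundle preserves stability, so $E\in M(n,3n)$. By Riemann--Roch $\chi(E)=3n-4n=-n$, while by Serre duality $h^1(E)=h^0(K_C\otimes E^*)=h^0(T\otimes G)$, so $h^0(E)=h^0(T\otimes G)-n$. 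Finally, $T$ is generated with $h^0(T)=2$ and $G$ is semistable with $\mu(G)=2<3=d_T$, so Lemma \ref{l2.3} gives $h^0(T\otimes G)\ge 2h^0(G)\ge 2\lfloor\frac{6n}5\rfloor$; hence $h^0(E)\ge 2\lfloor\frac{6n}5\rfloor-n$, proving the first assertion.

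For the final statement I would run the same construction with $n=4$ and $G=D(K_C)$, which by Proposition \ref{pln2}(iii) lies in $M(4,8)$ with $h^0=5$. Then $E=K_C\otimes T^*\otimes D(K_C)^*\in M(4,12)$, and the identity $h^0(E)=h^0(T\otimes D(K_C))-4$ combined with Lemma \ref{l2.3} gives $h^0(E)\ge 2\cdot 5-4=6$, so $B(4,12,6)\ne\emptyset$ (by Lemma \ref{l5.8} this value is in fact optimal). There is no serious obstacle here; the only points that need a little care are checking that $E$ is genuinely stable (automatic, since it is a twist of a stable bundle by a line bundle), the bookkeeping in the Riemann--Roch and Serre-duality computation, and confirming that the hypotheses of Lemma \ref{l2.3} are met by $G$ via its ``semistable with $\mu<d_N$'' clause, which also takes care of the degenerate case $n=1$.
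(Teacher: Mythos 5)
Your proposal is correct and follows essentially the same route as the paper: apply Proposition \ref{pln2}(iii) to get a stable bundle $G$ of rank $n$ and degree $2n$ with $h^0(G)\ge\lfloor\frac{6n}5\rfloor$, use Lemma \ref{l2.3} with $N=T$ to bound $h^0(T\otimes G)$ from below, and conclude via Serre duality and Riemann--Roch that $K_C\otimes T^*\otimes G^*\in B(n,3n,2\lfloor\frac{6n}5\rfloor-n)$, with the exceptional bundle $K_C\otimes T^*\otimes D(K_C)^*$ handling $B(4,12,6)$. The only difference is that you spell out the duality bookkeeping and the stability check, which the paper leaves implicit.
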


\begin{proof}
We know, by Proposition \ref{pln2}(iii), that $B(n,2n,k') \neq \emptyset$ for $k' \leq \frac{6n}{5}$. If $E \in B(n,2n,k')$, then 
$h^0(T \otimes E) \geq 2h^0(E)$ by Lemma \ref{l2.3} and so
$$
h^0(K_C \otimes T^* \otimes E^*) \geq 2h^0(E) -n.
$$
This proves the first statement. For the second, note that $K_C \otimes T^* \otimes D(K_C)^*$ 
belongs to
$B(4,12,6)$.
\end{proof}

\begin{prop} \label{p4.9}
$B(n,d,k) \neq \emptyset$ in the following cases.
\begin{enumerate}
 \item[(i)] $(n,d,k) = (4r+s,12r+3s-1,6r+s-1)$ for $1 \leq r \leq 4, \; s \geq 0$;
 \item[(ii)] $(n,d,k) = (4r+s,12r+3s-2,6r+s-2)$ for $1 \leq r \leq 4,\; s \geq 4r+1$;
 \item[(iii)] $(n,d,k) = (n,3n-1,n +2r-1)$ for $n \geq 5r,\; r\geq 1$.
\end{enumerate}
\end{prop}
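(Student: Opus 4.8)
The plan is to obtain each of the three families from the corresponding family of Proposition \ref{p3.6} by applying the operation $E\mapsto K_C\otimes T^*\otimes E^*$, exactly as in the proof of Proposition \ref{p4.4}. For each case I would start from a stable bundle $E$ of rank $n$, degree $d'$ and $h^0(E)\geq k'$ supplied by Proposition \ref{p3.6}, with $(n,d',k')$ ranging over the three triples listed there (recall that $C$ trigonal is in particular non-hyperelliptic, so Proposition \ref{p3.6} applies).

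The first step is to note that in all three cases $E$ is stable of rank $n\geq 2$ with $\mu(E)=d'/n<3=d_T$ (an elementary inequality in $r,s$, resp.\ in $n$). Hence Lemma \ref{l2.3}, applied with $N=T$, gives
$$
h^0(T\otimes E)\;\geq\;2h^0(E)\;\geq\;2k'.
$$
The second step is a Riemann-Roch and Serre duality computation: $\chi(T\otimes E)=(d'+3n)-4n=d'-n$, and $h^0(K_C\otimes T^*\otimes E^*)=h^1(T\otimes E)=h^0(T\otimes E)-\chi(T\otimes E)$, so
$$
h^0(K_C\otimes T^*\otimes E^*)\;\geq\;2k'-d'+n.
$$
Since tensoring by a line bundle and dualizing preserve stability, $K_C\otimes T^*\otimes E^*$ is stable of rank $n$ and degree $5n-d'$; combined with $B(n,d,k+1)\subset B(n,d,k)$ this gives $B(n,\,5n-d',\,k)\neq\emptyset$ for every $k\leq 2k'-d'+n$.

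The final step is the substitution of the three triples from Proposition \ref{p3.6}. For $(n,d',k')=(4r+s,\,8r+2s+1,\,5r+s)$ one computes $(5n-d',\,2k'-d'+n)=(12r+3s-1,\,6r+s-1)$, which is (i); for $(4r+s,\,8r+2s+2,\,5r+s)$ one gets $(12r+3s-2,\,6r+s-2)$, which is (ii); and for $(n,\,2n+1,\,n+r)$ one gets $(3n-1,\,n+2r-1)$, which is (iii). The constraints $1\leq r\leq 4$ and $s\geq 0$ (resp.\ $s\geq 4r+1$, resp.\ $n\geq 5r$ and $r\geq 1$) carry over unchanged. I do not expect any genuine obstacle: all the substantive existence input is already packaged in Proposition \ref{p3.6}, and the only points to verify are that the slope condition for Lemma \ref{l2.3} holds (it does, in fact strictly) and that the degree and section-number bookkeeping produces exactly the stated triples, which the computations above confirm.
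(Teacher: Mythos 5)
Your proposal is correct and follows essentially the same route as the paper: start from the bundles of Proposition \ref{p3.6}, apply Lemma \ref{l2.3} with $N=T$, and then pass to $K_C\otimes T^*\otimes E^*$ via Serre duality and Riemann--Roch. The arithmetic checks out and the slope hypothesis of Lemma \ref{l2.3} is verified exactly as needed.
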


\begin{proof}
By Proposition \ref{p3.6}(i), we have with the hypotheses of (i), $B(4r+s,8r+2s+1,5r+s) \neq \emptyset$. Using Lemma \ref{l2.3} with $N = T$, it follows that 
$B(4r+s,20r+5s+1,10r+2s) \neq \emptyset$. The result follows by Serre duality and Riemann-Roch. (ii) and (iii) follow similarly from Proposition \ref{p3.6}(ii) and (iii).
\end{proof}

\begin{prop} \label{p4.11}
 For any $p \in C$, there exist exact sequences
 \begin{equation} \label{e4.1}
0 \ra U \ra E \ra \CC_p \ra 0  
 \end{equation}
 with $E$ stable. Hence $B(2,6,3) \neq \emptyset$.
\end{prop}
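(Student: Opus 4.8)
First I would observe that any $E$ in \eqref{e4.1} has rank $2$ and degree $6$, and that applying $H^0(-)$ to \eqref{e4.1} gives $0\to H^0(U)\to H^0(E)\to\CC$, so $h^0(E)\ge h^0(U)=3$ by Lemma \ref{l5.3}; hence it is enough to produce a single stable $E$ fitting into \eqref{e4.1}, and $B(2,6,3)\ne\emptyset$ will follow. For our fixed $p$, these elementary transformations form a family parametrised by the lines $\ell$ in the fibre $U_p$: picking a local parameter $z$ at $p$ and a local section $\widetilde v$ of $U$ whose value at $p$ spans $\ell$, the corresponding bundle $E_\ell$ is the subsheaf of $U(p)$ generated by $U$ and $z^{-1}\widetilde v$, and every $E$ in \eqref{e4.1} arises as some $E_\ell$.

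The plan is then to single out the bad directions $\ell$. Since $E_\ell$ has rank $2$ and slope $3$, it fails to be stable precisely when it has a line subbundle $L$ with $d_L\ge3$, and the main step is to show that such an $L$ is essentially unique. If the composite $L\to E_\ell\to\CC_p$ vanishes then $L\subset U$, which is impossible because $d_L\ge3>\frac{5}{2}=\mu(U)$ and $U$ is stable (Lemma \ref{l5.3}). Otherwise the composite is surjective, so $L\cap U=\ker(L\to\CC_p)\simeq L(-p)$ has degree $d_L-1\ge2$; its saturation in $U$ is a line subbundle of degree at most $2$ by stability of $U$, which forces $d_L=3$ and then $L\cap U$ to be a line subbundle of $U$ of degree $2$, namely $L\cap U=M$ by Lemma \ref{l4.1}. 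Thus $L=M(p)$, and the inclusion $L\hookrightarrow E_\ell$ restricts on $M$ to the given inclusion $M\subset U\subset E_\ell$. A short computation in the fibre at $p$ — writing $M$ near $p$ with a local generator $m$ (so $m(p)$ spans $M_p\subset U_p$) and deciding when $z^{-1}m$ is a section of $E_\ell$ — shows that this extends to an inclusion $M(p)\hookrightarrow E_\ell$ if and only if $\ell=M_p$. Hence $E_\ell$ is stable for every $\ell\ne M_p$, which gives the desired stable $E$.

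I expect the stability verification to be the only real obstacle, and within it the two points highlighted above: reducing an arbitrary destabilising line subbundle of $E_\ell$ to $M(p)$, where Lemma \ref{l4.1} is used in an essential way, and the fibrewise computation isolating the single forbidden direction $\ell=M_p$. The degree bookkeeping for the elementary transformation and the inequality $h^0(E_\ell)\ge3$ are routine.
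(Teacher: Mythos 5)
Your proposal is correct and follows essentially the same route as the paper: both arguments reduce a putative destabilising line subbundle $L$ of $E$ to $L=M(p)$ with $M=K_C\otimes T^{*2}$ the unique degree-$2$ line subbundle of $U$ from Lemma \ref{l4.1}, and then observe that this can only occur for special extensions. The only difference is cosmetic: where the paper invokes "the general extension," you make the genericity explicit by parametrising the elementary transformations by lines $\ell\subset U_p$ and identifying $\ell=M_p$ as the unique bad direction, which is a slightly sharper (and perfectly valid) way to finish.
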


\begin{proof}
Consider exact sequences \eqref{e4.1}. If $E$ is not stable, then it possesses a line subbundle $N$ of degree 3. By Lemma \ref{l4.1}, it follows that we have a diagram
$$
\xymatrix{
0 \ar[r] & U  \ar[r] & E  \ar[r] & \CC_p \ar[r] \ar@{=}[d] & 0\\
0  \ar[r] & K_C \otimes {T^*}^2 \ar[r] \ar@{^{(}->}[u] & N  \ar[r] \ar@{^{(}->}[u] & \CC_p \ar[r]  & 0\\
}
$$
and the embedding of $K_C \otimes {T^*}^2$ in $U$ is unique up to a scalar. It follows that such a diagram cannot exist for the general extension \eqref{e4.1}.
\end{proof}

\begin{prop} \label{p4.12}
$B(2r,6r-1,3r-1) \neq \emptyset$ for any $r \geq 1$.\ 
\end{prop}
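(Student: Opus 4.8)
$B(2r,6r-1,3r-1) \neq \emptyset$ for any $r \geq 1$.

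The plan is to realise a bundle in $B(2r,6r-1,3r-1)$ as an extension
$$
0 \ra U^{\oplus(r-1)} \ra E \ra G \ra 0,
$$
where $G$ is a stable rank-$2$ bundle of degree $r+4$ with $h^0(G)\ge2$, and the extension class is chosen so that a $2$-dimensional space of sections of $G$ lifts to $E$ and $E$ is stable. Note that $\mu(U^{\oplus(r-1)})=\frac52<\frac{6r-1}{2r}<\frac{r+4}{2}=\mu(G)$ for every $r\ge2$ (with equality on the right when $r=1$), so this is the natural slope configuration for stability; for $r=1$ the sequence degenerates to $E=G$ and the assertion reduces to $B(2,5,2)\ne\emptyset$, which follows from Corollary \ref{c2.2}.

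First I would check that $B(2,r+4,2)\ne\emptyset$ for all $r\ge1$: for $r\le2$ this is Corollary \ref{c2.2}, for $r=3$ it is contained in Proposition \ref{p3.7}, and for $r\ge4$ (so $r+4\ge8$) it follows from Proposition \ref{prop4.13} (giving $B(2,8,4)\ne\emptyset$) together with tensoring by effective line bundles to reach every degree $\ge8$. Fix such a $G$ and a $2$-dimensional subspace $V\subseteq H^0(G)$. The sections in $V$ lift to $E$ exactly when the extension class $e\in\Ext^1(G,U^{\oplus(r-1)})=H^1(G^*\otimes U^{\oplus(r-1)})$ lies in the kernel $K$ of the cup-product map
$$
H^1(G^*\otimes U^{\oplus(r-1)})\ra\Hom\bigl(V,H^1(U^{\oplus(r-1)})\bigr).
$$
A Riemann--Roch computation gives $h^1(G^*\otimes U^{\oplus(r-1)})\ge(r-1)(2r+14)$, while the target has dimension $2\,h^1(U^{\oplus(r-1)})=12(r-1)$, so $\dim K\ge2(r^2-1)$. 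For $e\in K$ the resulting $E$ has rank $2r$, degree $5(r-1)+(r+4)=6r-1$, and $h^0(E)\ge h^0(U^{\oplus(r-1)})+\dim V=3(r-1)+2=3r-1$, as required.

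The remaining, and main, point is that a general $e\in K$ yields a \emph{stable} $E$. I would argue this by a dimension count: a proper subbundle $F\subset E$ with $\mu(F)\ge\mu(E)$, after saturating $F\cap U^{\oplus(r-1)}$ inside $U^{\oplus(r-1)}$ and projecting $F$ to $G$ and invoking the semistability of $U^{\oplus(r-1)}$ and the stability of $G$, is forced to lie in a bounded family; for each such $F$ the locus of classes $e$ admitting $F$ as a subbundle of the corresponding $E$ has codimension in $H^1(G^*\otimes U^{\oplus(r-1)})$ which one shows exceeds $\operatorname{codim}K$, so that a general class in $K$ avoids all of them. The delicate part is the contribution of low-rank destabilising subbundles, where the naive slope inequality is not by itself conclusive; this is where the bulk of the work goes, in the spirit of (but more involved than) the stability arguments in Propositions \ref{p4.11} and \ref{prop4.13}. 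Alternatively, one may run an induction on $r$: starting from a stable $E_{r-1}\in B(2r-2,6r-7,3r-4)$, form a general extension $0\ra U\ra E'\ra E_{r-1}\ra0$ in which all sections of $E_{r-1}$ lift (so $h^0(E')\ge3r-1$, and $E'$ is stable for $r\ge3$ since $\frac52<3-\frac1r<3-\frac1{2r-2}$), and then perform a general elementary transformation $0\ra E'\ra E\ra\CC_p\ra0$; here $r=1$ and $r=2$ are handled separately by Corollary \ref{c2.2} and Proposition \ref{p4.9}(i).
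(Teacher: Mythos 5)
Your construction produces a bundle of the right rank, degree and number of sections, but in both of your suggested routes the decisive step --- stability of $E$ --- is left unproved, and you acknowledge as much ("the delicate part is the contribution of low-rank destabilising subbundles\dots this is where the bulk of the work goes"). In the extension $0 \ra U^{\oplus(r-1)} \ra E \ra G \ra 0$ the quotient $G$ has slope $\frac{r+4}{2}$, which grows without bound while $\mu(E)<3$, so $G$ carries line subbundles of degree roughly $\frac{r}{2}$; whether these lift into $E$ for a class in your kernel $K$ is exactly the question, and the dimension count you would need (codimension of the lifting locus versus $\operatorname{codim} K$, for each member of a positive-dimensional family of such subbundles) is not carried out. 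The same problem recurs at the end of your inductive alternative: for a stable $E'$ of slope $3-\frac1r$, a positive elementary transformation $0\ra E'\ra E\ra\CC_p\ra 0$ can acquire destabilising subbundles of rank $\le r$ (a subsheaf of $E'$ of degree $3n_F-1$ can saturate to degree $3n_F$ in $E$), and "general" is not by itself a proof here. There is also a minor slip: tensoring $B(2,8,4)$ by effective line bundles only reaches even degrees, so it does not supply $G$ of odd degree $r+4$.

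The paper avoids all of this by choosing the pieces so that stability is essentially forced. It takes $r$ \emph{pairwise non-isomorphic} bundles $E_1,\dots,E_r\in B(2,6,3)$ (supplied by Proposition \ref{p4.11}) and defines $E$ by a negative elementary transformation $0\ra E\ra E_1\oplus\cdots\oplus E_r\ra\CC_p\ra 0$ with every $E_i\ra\CC_p$ non-zero. Since the $E_i$ are stable of the same slope $3$ and mutually non-isomorphic, the only subbundles of $\bigoplus E_i$ of slope $3$ are the partial direct sums, and none of these lies in $E$ because each maps onto $\CC_p$; every other subbundle already has slope $<3-\frac{1}{2r-1}<\mu(E)$ for degree reasons. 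Sections are controlled trivially ($h^0$ drops by at most $1$). If you want to salvage your approach, you would need to supply the omitted codimension estimates; but the polystable-direct-sum trick is the cleaner path.
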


\begin{proof}
Choose $r$ pairwise non-isomorphic bundles $E_1, \dots, E_r \in B(2,6,3)$. These exist by Proposition \ref{p4.11}. Let $E$ be an elementary transformation
$$
0 \ra E \ra E_1 \oplus \cdots \oplus E_r \ra \CC_p \ra 0
$$
for some $p \in C$ such that the homomorphisms $E_i\to\CC$ are all non-zero. Since the partial direct sums of the $E_i$ are the only subbundles of  $E_1 \oplus \cdots \oplus E_r$ of slope $3$, it follows that every subbundle of $E$ has slope $<3$. Hence $E \in B(2r,6r-1,3r-1)$.
\end{proof}

\begin{prop} \label{p4.13}
For any $p \in C$, there exist exact sequences 
\begin{equation} \label{e4.2}
0 \ra U \oplus U \ra E \ra \CC_p \ra 0 
\end{equation}
with $E$ stable. Hence $B(4,11,6) \neq \emptyset$.
\end{prop}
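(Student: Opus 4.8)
The plan is to follow exactly the template of Proposition \ref{p4.11}, replacing the single copy of $U$ by $U\oplus U$. Consider exact sequences \eqref{e4.2}. The resulting $E$ has rank $4$, degree $11$ and, since the sections of $U\oplus U$ all lift whenever the extension is chosen appropriately (or even for the general extension, as $h^0$ can only drop by a bounded amount), $h^0(E)\ge h^0(U\oplus U)=6$; so once stability is established we get $E\in B(4,11,6)$. The content is therefore entirely in showing that the general such extension is stable.

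Suppose $E$ is \emph{not} stable. Then $E$ has a subbundle $F$ with $\mu(F)\ge\mu(E)=\frac{11}{4}$, so $\mu(F)\ge3$. Composing $F\hookrightarrow E\to\CC_p$, either $F\subset U\oplus U$ (if the composite is zero) or $F$ maps onto $\CC_p$ with kernel a subbundle of $U\oplus U$. Since $U\oplus U$ is semistable of slope $\frac52<3$, it has no subbundle of slope $\ge3$; hence the first case is impossible, and in the second case $F':=\ker(F\to\CC_p)$ is a subbundle of $U\oplus U$ of slope $\ge3$ and rank one less than $F$. The only way $F'\subset U\oplus U$ can have slope $\ge3$ is if $\mu(F')=3$ is attained, i.e. $F'$ is a destabilizing-type subsheaf of the semistable bundle $U\oplus U$ of slope exactly $\frac{d_{F'}+1}{n_{F'}+1}$-forcing; concretely one checks, using Lemma \ref{l4.1} (the unique line subbundle $M\simeq K_C\otimes{T^*}^2$ of $U$ of degree $2$) together with semistability of $U$, that the only subsheaves of $U\oplus U$ which can give $\mu(F)\ge\frac{11}4$ after adding back the point $p$ are: (a) a line bundle of degree $3$ inside $U\oplus U$, (b) a rank-$2$ subsheaf of degree $\ge5$, or (c) a rank-$3$ subsheaf of degree $\ge8$. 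In each case the possibility is pinned down by the structure of $U$: a degree-$3$ line subbundle of $U\oplus U$ would project nontrivially to a factor $U$ and contradict the stability of $U$; and the rank-$2$ and rank-$3$ cases are controlled similarly, the rank-$2$ degree-$5$ subbundles being copies of $U$ by Lemma \ref{l5.3}, so that a destabilizing $F$ forces a splitting-type diagram
$$
\xymatrix{
0 \ar[r] & U\oplus U  \ar[r] & E  \ar[r] & \CC_p \ar[r] \ar@{=}[d] & 0\\
0  \ar[r] & F' \ar[r] \ar@{^{(}->}[u] & F  \ar[r] \ar@{^{(}->}[u] & \CC_p \ar[r]  & 0\\
}
$$
in which $F'\subset U\oplus U$ lies in a \emph{proper} family of subsheaves (by Lemma \ref{l4.1}, those of each relevant type form a family of dimension strictly smaller than the $\PP(H^0(\CC_p)\otimes\Ext^1)\cong$ family of extensions \eqref{e4.2}). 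Hence for the general extension no such $F$ exists and $E$ is stable.

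The main obstacle is the bookkeeping in the previous paragraph: one must enumerate all the numerical types $(n_{F'},d_{F'})$ of a subsheaf $F'$ of $U\oplus U$ for which the extension by $\CC_p$ can yield $\mu(F)\ge\frac{11}4$, and for each type show that the subsheaves of that type move in a family of dimension less than that of the extension space, so a dimension count rules them out for the generic $\xi\in\Ext^1(\CC_p,U\oplus U)$. The key inputs here are Lemma \ref{l4.1} (uniqueness of the degree-$2$ line subbundle of $U$), Lemma \ref{l5.3} (the rank-$2$ degree-$5$ stable bundle is unique, namely $U$) and the semistability of $U$, together with the observation that $\dim\Ext^1(\CC_p,U\oplus U)=n_{U\oplus U}=4$ (minus the $1$-dimensional scaling) is large enough to dominate each such family. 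The conclusion $B(4,11,6)\ne\emptyset$ then follows as stated.
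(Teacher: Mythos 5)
Your proposal is essentially the paper's own argument: pass to a general extension, convert a destabilizing subbundle $F$ of $E$ into a subsheaf $F'$ of $U\oplus U$ via the commutative diagram, rule out rank $3$ by semistability of $U\oplus U$, and eliminate ranks $1$ and $2$ by a dimension count resting on Lemma \ref{l4.1} and Lemma \ref{l5.3} (the paper phrases the count as $h^0(F'^*\otimes(U\oplus U))=2$ in both cases, so the relevant $F'$ move in a $\PP^1$ and the bad locus in $\Ext^1(\CC_p,U\oplus U)\cong\CC^4$ has dimension at most $3$). There are, however, some slips in the middle of your argument that should be repaired. First, $F'$ has the \emph{same rank} as $F$ and degree one less (not rank one less), and it is $\mu(F)$, not $\mu(F')$, that is forced to be $\ge 3$ by integrality; $\mu(F')$ is only $\ge 3-\frac{1}{n_{F'}}$. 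Second, and more importantly, in the rank-$1$ case the subsheaf that actually occurs is a line bundle of degree $2$ in $U\oplus U$ (so that $F$ has degree $3$), namely one of the $\PP^1$ of embeddings of $M\simeq K_C\otimes {T^*}^2$; your case (a) instead posits a degree-$3$ line subbundle of $U\oplus U$ and dismisses it via stability of $U$, but that configuration is vacuous (it is the already-excluded case where $F\ra\CC_p$ is zero), and dismissing it does not dispose of the genuine degree-$2$ case, which must be handled by the same dimension count as the rank-$2$ case. Your closing paragraph does state the correct structure and the correct inputs, so these are local errors in the bookkeeping rather than a wrong approach, but as written the rank-$1$ case is not actually eliminated.
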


\begin{proof}
Consider exact sequences \eqref{e4.2}. If $E$ is not stable, there exists a diagram 
$$
\xymatrix{
0 \ar[r] & U \oplus U  \ar[r] & E  \ar[r] & \CC_p \ar[r] \ar@{=}[d] & 0\\
0  \ar[r] & F' \ar[r] \ar@{^{(}->}[u] & F  \ar[r] \ar@{^{(}->}[u] & \CC_p \ar[r]  & 0\\
}
$$
with $n_F \leq 3$ and $\mu(F) = 3$.

If $n_F = 1$, then $d_{F'} = 2$ and $h^0(F'^* \otimes (U \oplus U)) = 2$ by Lemma \ref{l4.1}.
If $n_F  = 2$, then $d_{F'} = 5$. In this case, $F' \simeq U$ and again $h^0(F'^* \otimes (U \oplus U)) =2$.
In both cases the diagram cannot exist for a general extension \eqref{e4.2}.
If $n_F = 3$, then $d_{F'} =8$. This contradicts the semistability of $U \oplus U$.
\end{proof}

\begin{prop} \label{p4.14}  
For any $p,q \in C$, there exist exact sequences
\begin{equation} \label{e4.3}
0 \ra E_L \ra E \ra \CC_q \ra 0 
\end{equation}
with $E$ stable, where $L = K_C(-p)$. In particular $B(3,8,4) \neq \emptyset$.
\end{prop}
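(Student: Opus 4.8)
The plan is to realise $E$ as a general extension \eqref{e4.3}. Fix $p$ (so that $L=K_C(-p)$ and, by Lemma~\ref{l5.1}, the bundle $E_L$ — stable of rank $3$ and degree $7$ — are determined) and fix $q$. Extensions \eqref{e4.3} are classified by $\mathrm{Ext}^1(\CC_q,E_L)\cong(E_L)_q$, which is $3$-dimensional; any resulting $E$ has rank $3$, degree $8$ and $h^0(E)\ge h^0(E_L)=4$, so it is enough to show that $E$ is stable for a general extension class, as then $E\in B(3,8,4)$. Since $\mu(E)=\frac83$ is not the slope of any bundle of rank $1$ or $2$, $E$ fails to be stable exactly when it has a subbundle $F$ of rank $1$ or $2$ with $\mu(F)>\frac83$.

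If $n_F=2$, then $d_F\ge6$, so $E/F$ is a line bundle of degree $\le2$; the composite $E_L\hookrightarrow E\twoheadrightarrow E/F$ is non-zero (on rank grounds it cannot vanish) with image a quotient line bundle of $E_L$ of degree $\le2<\frac73$, contradicting the stability of $E_L$. Hence $n_F=1$ and $d_F\ge3$; now $F\not\subset E_L$ (stability of $E_L$ again), so $F\to E/E_L=\CC_q$ is surjective, $F\cap E_L\cong F(-q)$ has degree $d_F-1\ge2$, and its saturation in $E_L$ has degree $<\frac73$, forcing $d_F=3$ and $F'':=F(-q)$ to be a degree-$2$ line subbundle of $E_L$. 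The evident morphism of short exact sequences (the identity on $\CC_q$, the inclusion $F''\hookrightarrow E_L$ on the left) exhibits \eqref{e4.3} as the push-out of $0\to F''\to F\to\CC_q\to0$ along $F''\hookrightarrow E_L$, so the extension class of $E$ lies in — indeed spans — the line $(F'')_q\subseteq(E_L)_q$, the image of the injection $\mathrm{Ext}^1(\CC_q,F'')\hookrightarrow\mathrm{Ext}^1(\CC_q,E_L)$.

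Thus $E$ can be non-stable only if its extension class spans one of the lines $(F'')_q$, where $F''$ ranges over the degree-$2$ line subbundles of $E_L$, and it remains to check that these lines do not fill $(E_L)_q$. By Lemma~\ref{l5.4}, $E_L/F''\cong U$ for every such $F''$, so these subbundles form the Quot scheme of rank-$2$, degree-$5$ quotients of $E_L$, namely $\PP(\mathrm{Hom}(E_L,U))$ (every non-zero homomorphism $E_L\to U$ being surjective). Its Zariski tangent space at a quotient with kernel $F''$ is $\mathrm{Hom}(F'',U)=H^0(F''^*\otimes U)$, and here lies the crux: $F''^*\otimes U$ is stable of rank $2$ and degree $1$ (stability passing down from $U$), and $h^0(F''^*\otimes U)\le1$ — two independent sections would generate either a line subbundle with $h^0\ge2$, hence of degree $\ge1$, or a subsheaf isomorphic to $\cO_C^2$ of colength $1$, whose dual would be the unstable bundle $\cO_C\oplus\cO_C(-x)\subset\cO_C^2$; the former violates the stability of $F''^*\otimes U$ (slope $\frac12$), and the latter is impossible. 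Hence this Quot scheme has dimension $\le1$, the union of the lines $(F'')_q$ is at most $2$-dimensional inside the $3$-dimensional space $(E_L)_q$, and a general extension class gives a stable $E$; in particular $B(3,8,4)\ne\emptyset$. The main obstacle is precisely this last count — bounding how many degree-$2$ line subbundles $E_L$ can have — the rest being routine bookkeeping with stability.
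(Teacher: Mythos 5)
Your argument is correct and follows essentially the same route as the paper's: realise $E$ as a general extension, rule out rank-$2$ destabilizing subbundles via the stability of $E_L$, reduce rank-$1$ destabilizers to degree-$2$ line subbundles $F''$ of $E_L$, and show that these form at most a one-parameter family, so that the lines $(F'')_q$ cannot fill the $3$-dimensional space $\Ext^1(\CC_q,E_L)$. The only divergence is in the final count: the paper identifies $F''\simeq T(-p)$ and bounds $h^0(E_L\otimes T^*(p))\le 2$ by Proposition \ref{pln2}(i), whereas you bound the Quot-scheme tangent space $\Hom(F'',U)$; your direct proof that $h^0(F''^*\otimes U)\le 1$ works, though the colength-one subsheaf of $\cO_C^2$ dual to $F''^*\otimes U$ need not split as $\cO_C\oplus\cO_C(-x)$ --- it suffices that it contains $\cO_C$ as a degree-$0$ line subbundle, or one can simply quote Proposition \ref{pln2}(i) again.
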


\begin{proof}
Suppose that $F$ is a proper subbundle of $E$ with $\mu(F) \geq \frac{8}{3}$. Then we have a diagram 
$$
\xymatrix{
0 \ar[r] & E_L  \ar[r] & E  \ar[r] & \CC_q \ar[r] \ar@{=}[d] & 0\\
0  \ar[r] & F' \ar[r] \ar@{^{(}->}[u] & F  \ar[r] \ar@{^{(}->}[u] & \CC_q \ar[r]  & 0\\
}
$$
If $n_F = 2$, we must have $d_{F'} = 5$. This contradicts the stability of $E_L$.

If $n_{F} = 1$, we must have $d_{F'} =2$. It follows from Lemma \ref{l5.4} that $F'\simeq T(-p)$. Moreover, $h^0(E_L \otimes F'^*) \leq 2$ by Proposition \ref{pln2}(i), since 
$E_L \otimes F'^*$ is stable of slope $\frac{1}{3}$. It follows that the diagram cannot exist for a general extension \eqref{e4.3}.
\end{proof}

\section{Curves of Clifford index $2$}\label{cliff}

Suppose that $C$ is a curve of genus 5 and Clifford index 2. The main difference from the trigonal case is that the bundles $T$ and $U$ do not exist. 
However $C$ is tetragonal and possesses a one-dimensional family of line bundles of degree 4 with $h^0 = 2$. In this case we have the following 
slight improvement of Proposition \ref{p5.6}.

\begin{prop} \label{p5.1}
Let $C$ be a curve of genus $5$ and Clifford index $2$ and $E$ a semistable bundle of rank $n$ and degree $d$ with $\mu(E) > \frac{7}{3}$. Then 
$$
h^0(E) < d-n.
$$
\end{prop}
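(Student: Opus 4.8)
The plan is to follow the proof of Proposition~\ref{p5.6} very closely, carrying out the same induction on $n$ but tracking strict inequalities; the one new input is that, by Lemma~\ref{l5.3}, a curve of Clifford index $2$ carries no stable bundle of rank $2$ and degree $5$ with $h^0=3$. Before starting the induction I would dispose of the range $\mu(E)>\tfrac52$: here Proposition~\ref{p1.6}(ii) gives $h^0(E)\le\tfrac13(d-n)+n$ (valid since $\mu(E)\le 4$), and the inequality $\tfrac13(d-n)+n<d-n$ is equivalent to $\mu(E)>\tfrac52$; for $\mu(E)>4$ one applies Proposition~\ref{p2.2}(i) (or the trivial bound when $\mu(E)$ is very large) to $K_C\otimes E^*$ and uses Serre duality. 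So the substance is the range $\tfrac73<\mu(E)\le\tfrac52$, which I would treat by induction on $n$.

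The base case is $n=2$: since $\tfrac73<\tfrac dn\le\tfrac52$ has no integer solution for $n=1$ or $n=3$, the only possibility is $(n,d)=(2,5)$, $\mu(E)=\tfrac52$. A strictly semistable bundle of rank $2$ and degree $5$ cannot exist, since its Jordan--H\"older factors would be line bundles of slope $\tfrac52$; hence $E$ is stable, and Lemma~\ref{l5.3} forces $h^0(E)\le2<3=d-n$. This is the only point at which $\Cl(C)=2$ is used.

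For the inductive step, $n\ge4$: if $h^0(E)\le n+\tfrac14(d-n)$ we are done at once, since $\mu(E)>\tfrac73$ gives $n+\tfrac14(d-n)<d-n$. Otherwise Proposition~\ref{p3.2}(i) produces a non-zero homomorphism $E_L\to E$, and I would copy the construction in the proof of Proposition~\ref{p5.6}: take $0\to F\to E\to G\to0$ with $F$ a stable subbundle of maximal slope and $G$ semistable. As there, $\mu(G)\ge\mu(E)>\tfrac73$, while the saturation of the image of $E_L$ is a proper subbundle of slope $\ge\tfrac73$, so $\mu(F)\ge\tfrac73$, with equality only when $F\simeq E_L$. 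Now $h^0(E)\le h^0(F)+h^0(G)$. Since $\mu(G)>\tfrac73$ strictly, $G$ is not isomorphic to $E_L$ and not of slope $\le\tfrac73$; applying the case $\mu>\tfrac52$ already settled, or, when $\tfrac73<\mu(G)\le\tfrac52$, the inductive hypothesis (valid since then $n_G\ge2$ and $n_G\neq3$, so either $n_G=2$ is the base case or $n_G\ge4$ has smaller rank), we obtain $h^0(G)<d_G-n_G$. Likewise, if $F\not\simeq E_L$ then $h^0(F)<d_F-n_F$ by the same dichotomy, while if $F\simeq E_L$ then $h^0(F)=4=d_F-n_F$. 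In every case $h^0(E)<(d_F-n_F)+(d_G-n_G)=d-n$, completing the induction.

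The main obstacle is the bookkeeping rather than any single hard step: because the quotient $G$ in the decomposition can have slope anywhere above $\tfrac73$, the proposition has to be established for all $\mu>\tfrac73$ simultaneously, so the ``easy'' regime $\mu>\tfrac52$ (and the large-slope regime, via Serre duality) must be cleared first in order that the induction can feed on itself from below. One must also check that the potential equality $F\simeq E_L$ does no harm; it does not, precisely because $\mu(E_L)=\tfrac73$ lies on the boundary of the range under consideration, so in that case the strict inequality is supplied by the other factor $G$.
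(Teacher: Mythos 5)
Your proof is correct and takes essentially the same route as the paper: the paper's own argument simply states that the induction of Proposition \ref{p5.6} goes through with the strict inequality once one notes, via Proposition \ref{p1.6}(ii) and Lemma \ref{l5.3}, that no bundle of rank $1$, $2$ or $3$ with $\mu>\frac{7}{3}$ attains $h^0=d-n$, which is precisely the content of your preliminary reduction and base case. The only difference is organisational --- you clear the regime $\mu>\frac{5}{2}$ for all ranks first and then induct on the strip $\frac{7}{3}<\mu\le\frac{5}{2}$, whereas the paper packages the same facts as improved low-rank base cases.
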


\begin{proof}
The proof of Proposition \ref{p5.6} goes through with the improved inequality, noting that, by Proposition \ref{p1.6}(ii) and Lemma \ref{l5.3} there are no bundles $E$ of ranks 1, 2 or 3 with  $\mu(E)> \frac{7}{3}$ and $h^0(E) = d-n$. 
\end{proof}

\begin{theorem} \label{thm5.2}
Let $C$ be a curve of genus $5$ and Clifford index $2$. If $2n < d \leq 4n$ and $\widetilde B(n,d,k) \neq \emptyset$, then one of the following holds.
\begin{enumerate}
 \item[(i)] $2n < d \leq \frac{7n}{3}$ and $ k \leq n + \frac{1}{4}(d-n)$;
 \item[(ii)] $\frac{7n}{3} < d \leq \frac{5n}{2}$ and $k < d-n$;
 \item[(iii)] $\frac{5n}{2} < d \leq 4n$ and $ k \leq n + \frac{1}{3}(d-n)$.
\end{enumerate}
\end{theorem}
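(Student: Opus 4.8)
The plan is to deduce the theorem directly from the semistable upper bounds already proved, organised according to the slope $\mu = d/n$, which lies in $(2,4]$ by hypothesis. Given $\widetilde B(n,d,k)\ne\emptyset$, pick a semistable bundle $E$ of rank $n$ and degree $d$ with $h^0(\gr E)\ge k$; since $\gr E$ is polystable of rank $n$ and degree $d$, hence semistable of slope $\mu$, it is enough to bound $h^0$ of a semistable bundle of slope $\mu\in(2,4]$. Recall also that a curve of Clifford index $2$ is non-hyperelliptic, so Propositions \ref{p3.2} and \ref{p5.1} are available. The three $d$-intervals $(2n,\frac{7n}{3}]$, $(\frac{7n}{3},\frac{5n}{2}]$ and $(\frac{5n}{2},4n]$ partition $(2n,4n]$, so for any admissible triple $(n,d,k)$ exactly one of (i)--(iii) is relevant.

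For $2<\mu\le\frac{7}{3}$ I would argue by contradiction: if $h^0(\gr E)>n+\frac14(d-n)$, then Proposition \ref{p3.2}(ii) applied to $\gr E$ forces $\mu>\frac73$, contradicting the hypothesis; hence $k\le h^0(\gr E)\le n+\frac14(d-n)$, giving (i). For $\frac73<\mu\le\frac52$, Proposition \ref{p5.1} applied to $\gr E$ yields $h^0(\gr E)<d-n$, hence $k<d-n$, giving (ii). For $\frac52<\mu\le4$, the key observation is that $2+\frac2{g-4}=4$ when $g=5$, so Proposition \ref{p1.6}(ii) is valid throughout this slope range and gives $h^0(\gr E)\le\frac1{g-2}(d-n)+n=n+\frac13(d-n)$, hence $k\le n+\frac13(d-n)$, giving (iii).

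I do not expect a genuine obstacle in this argument: the substance of the theorem is entirely contained in Propositions \ref{p3.2}, \ref{p5.1} and \ref{p1.6}, and what remains is the bookkeeping of patching them together over $2<\mu\le4$. The only points needing a little care are that the bounds must be applied to $\gr E$ rather than to $E$, that $\gr E$ has slope exactly $d/n$ so that it falls in the same interval as $(n,d)$, and that the numerical coincidence $2+\frac2{g-4}=4$ for $g=5$ is exactly what lets Proposition \ref{p1.6}(ii) cover case (iii) without any extra work; as already noted, it is the absence of the trigonal bundles $T$ and $U$ that makes the Clifford-index-$2$ statement this clean.
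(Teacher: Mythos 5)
Your proposal is correct and follows exactly the paper's own proof: (i) from Proposition \ref{p3.2}, (ii) from Proposition \ref{p5.1}, and (iii) from Proposition \ref{p1.6}(ii) with the observation that $2+\frac{2}{g-4}=4$ for $g=5$. The extra care you take in applying the bounds to $\gr E$ (which is semistable of the same slope) is the right way to handle the $\widetilde B$ formulation and is left implicit in the paper.
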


\begin{proof}
 (i) follows from Proposition \ref{p3.2}, for (ii) see Proposition \ref{p5.1} and (iii) is Proposition \ref{p1.6}(ii) for $g=5$.
\end{proof}

\begin{rem} \label{r5.3}
 {\rm
 It follows from Theorem \ref{thm5.2} that all bundles computing $\Cliff_n(C)$ in this case have degree $4n$ and $h^0 = 2n$.
 For rank 2, this was proved in \cite[Proposition 5.7]{ln3}. At least on a general curve of genus 5, there exist     
stable bundles of rank 2 and degree 8 with $h^0= 4$ by \cite[Section 3]{bf}. This answers a question 
raised in \cite[Remark 5.8]{ln3}).
 }
\end{rem}

\begin{theorem} \label{thm5.3}
Let $C$ be a general curve of genus $5$. If $2n < d \leq 4n$ and $\widetilde B(n,d,k) \neq \emptyset$, then one of the following holds.
\begin{enumerate}
\item[(i)] $2n < d \leq \frac{7n}{3}$ and $ k \leq n + \frac{1}{4}(d-n)$;
 \item[(ii)] $\frac{7n}{3} < d \leq \frac{5n}{2}$ and $k < d-n$;
 \item[(iii)] $\frac{5n}{2} < d \leq \frac{8n}{3}$ and $k \leq \frac{3n}{2}$;
 \item[(iv)] $\frac{8n}{3} < d < 3n$ and $ k \leq \frac{n}{2} + \frac{3d}{8}$;
 \item[(v)] $3n \leq d \leq 4n$ and $k \leq  n+ \frac{1}{3}(d-n)$.
\end{enumerate}
\end{theorem}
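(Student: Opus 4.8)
The plan is to combine Theorem \ref{thm5.2}, which applies to our curve verbatim, with Theorem \ref{thm4.7}, transported from the trigonal locus by a specialization argument (there is no trigonal bundle available on a curve of Clifford index $2$, so the estimates of Proposition \ref{p4.3} cannot simply be reproduced directly). First I would dispose of the ranges that require nothing new: a general curve of genus $5$ is non-hyperelliptic and not trigonal (the trigonal curves form a proper closed subvariety of $M_5$), hence has Clifford index $2$, so Theorem \ref{thm5.2} applies and yields (i) for $2n<d\le\frac{7n}{3}$, (ii) for $\frac{7n}{3}<d\le\frac{5n}{2}$, and the bound $k\le n+\frac13(d-n)$ for $\frac{5n}{2}<d\le4n$; the last of these coincides with (v) once we restrict to $3n\le d\le4n$. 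What remains is to sharpen this last bound to (iii) and (iv) in the sub-range $\frac{5n}{2}<d<3n$.

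For this I would fix a triple $(n,d,k)$ with $\frac{5n}{2}<d<3n$ for which $k$ exceeds the bound asserted in (iii) or (iv), and let $Z=Z(n,d,k)\subseteq M_5$ be the locus of curves $C$ with $\widetilde B(n,d,k)\ne\emptyset$. Over a family of smooth curves dominating $M_5$, the relative moduli space of semistable bundles of rank $n$ and degree $d$ is proper over the base and the relative Brill-Noether locus is closed in it, so $Z$ is closed in $M_5$. If $Z$ were all of $M_5$ it would contain a trigonal curve of genus $5$; but Theorem \ref{thm4.7} applied to such a curve forces $k\le\frac{3n}{2}$ when $\frac{5n}{2}<d\le\frac{8n}{3}$ (via case (iii)) and $k\le\frac{n}{2}+\frac{3d}{8}$ when $\frac{8n}{3}<d<3n$ (via case (iv)), contradicting the choice of $(n,d,k)$. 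Hence each such $Z(n,d,k)$ is a \emph{proper} closed subvariety, and a curve lying outside the union of these (countably many) subvarieties --- i.e.\ a very general curve of genus $5$ --- satisfies (iii) and (iv). To finesse the dependence of $\widetilde B$ on $\gr E$ one can run the argument on the stable Brill-Noether locus instead: by Lemma \ref{lbb}, $\widetilde B(n,d,k)\ne\emptyset$ forces $B(n',d',k')\ne\emptyset$ for one of finitely many triples with $n'\le n$, $\frac{d'}{n'}=\frac{d}{n}$ and $\frac{k'}{n'}\ge\frac{k}{n}$, and since the range hypotheses and the bounds of cases (iii), (iv) of Theorem \ref{thm4.7} are homogeneous in $(n,d,k)$, the conclusion for $(n',d',k')$ on the trigonal curve propagates back to $(n,d,k)$.

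The step I expect to be the main obstacle is the specialization itself: one has to be careful that ``general curve'' must here be read as \emph{very general}, so that the curve avoids the countable union, over all triples violating (iii) or (iv), of the proper closed loci $Z(n,d,k)$; and one should make precise the properness of the relative moduli space together with the closedness of the relative Brill-Noether locus --- cleanest for the stable locus, whence the reduction through Lemma \ref{lbb}. The rest is bookkeeping: checking that the five ranges exhaust $(2n,4n]$, that the bounds agree at the common endpoints $d=\frac{7n}{3},\frac{5n}{2},\frac{8n}{3},3n$, and that cases (iii) and (iv) of Theorem \ref{thm4.7} are the only ones in play over $\frac{5n}{2}<d\le\frac{8n}{3}$ and $\frac{8n}{3}<d<3n$ respectively.
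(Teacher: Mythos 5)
Your proposal is correct and follows essentially the same route as the paper, whose entire proof reads: (i), (ii) and (v) follow from Theorem \ref{thm5.2}, while (iii) and (iv) follow from Theorem \ref{thm4.7}(iii) and (iv) ``by semicontinuity.'' Your elaboration of that last word --- closedness of the relative Brill--Noether locus, properness over the base, reduction to stable bundles via Lemma \ref{lbb} and homogeneity, and the observation that ``general'' should strictly be read as ``very general'' because countably many triples $(n,d,k)$ are involved --- is a faithful and more careful unpacking of the argument the authors leave implicit.
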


\begin{proof}
(i), (ii) and (v) follow from Theorem \ref{thm5.2}. (iii) and (iv) follow from Theorem \ref{thm4.7}(iii) and (iv) by semicontinuity. 
\end{proof}

\begin{prop} \label{p5.4}
Let $C$ be a curve of genus $5$ and Clifford index $2$. Then 
\begin{equation} \label{e5.1}
B(2,6,3) \neq \emptyset.
\end{equation}
\end{prop}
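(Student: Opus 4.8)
The plan is to produce a stable bundle in $B(2,6,3)$ as a dual span $E_L = D(L)$. First I would take $L = K_C(-p-q)$ for arbitrary points $p, q \in C$. Since $C$ is non-hyperelliptic, $h^0(\cO_C(p+q)) = 1$, so Riemann--Roch gives $h^0(L) = 3$; and since $\Cliff(C) = 2$ the curve $C$ is non-trigonal, so $h^0(\cO_C(p+q+r)) = 1$ for every $r \in C$, whence $h^0(L(-r)) = 2$ and $L$ is generated. The evaluation sequence \eqref{eq2.1} then produces a bundle $E_L$ of rank $2$ and degree $6$ with $H^0(E_L^*) = 0$ (the induced map $H^0(L) \to H^0(L)$ being the identity) and with $h^0(E_L) \geq h^0(L) = 3$, the latter obtained by dualising \eqref{eq2.1}. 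So it suffices to prove that $E_L$ is stable, since then $E_L \in B(2,6,3)$.

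The key point is the stability of $E_L$. Let $M \subset E_L$ be a saturated line subbundle, and put $Q = E_L/M$, a line bundle of degree $6 - d_M$. Dualising $0 \ra M \ra E_L \ra Q \ra 0$ and composing $Q^* \hra E_L^*$ with the inclusion $E_L^* \subset H^0(L) \otimes \cO_C$ of \eqref{eq2.1} realises $Q^*$ as a saturated line subbundle of the trivial bundle $H^0(L) \otimes \cO_C$. A line subbundle of a trivial bundle is either isomorphic to $\cO_C$ or of negative degree; since $H^0(Q^*) \subset H^0(E_L^*) = 0$ the first alternative is impossible, so $d_{Q^*} < 0$, i.e. $d_Q \geq 1$. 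Dualising once more, $Q$ is a quotient of the trivial bundle $H^0(L)^* \otimes \cO_C$, hence generated; a generated line bundle of positive degree has at least two sections, so $h^0(Q) \geq 2$. But a curve of genus $5$ and Clifford index $2$ is neither rational nor hyperelliptic nor trigonal, so every line bundle on it with $h^0 \geq 2$ has degree $\geq 4$. Therefore $d_Q \geq 4$ and $d_M \leq 2 < 3 = \mu(E_L)$, so $E_L$ has no destabilising line subbundle and is stable.

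The main obstacle is exactly this stability step; the value of $h^0(L)$, the generation of $L$, and the bound $h^0(E_L) \geq 3$ are all routine applications of Riemann--Roch. One could instead appeal to a general stability criterion for dual spans in the spirit of \cite[Lemma 3.7]{ln2} (used similarly in Lemma~\ref{l5.1}), but since $d_L = 6$ is small relative to the genus the direct argument above seems cleanest, and it displays clearly why the hypothesis $\Cliff(C) = 2$ is what is needed.
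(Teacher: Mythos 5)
Your proposal is correct and is essentially the paper's own proof: the paper also takes the dual span $E_M$ of a degree-$6$ line bundle $M$ with $h^0(M)=3$ (any such $M$ is of the form $K_C(-p-q)$) and derives stability from the absence of pencils of degree $\le 3$ on a curve of Clifford index $2$. The only cosmetic difference is in the stability step: you bound quotient line bundles of $E_L$ directly (generated, positive degree, hence degree $\ge 4$), whereas the paper dualises a destabilising quotient of $E_M^*$ to produce an impossible generated rank-$2$ bundle of degree $\le 3$ with $h^0\ge 3$.
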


\begin{proof}
Let $M$ be a line bundle of degree 6 with $h^0(M) = 3$. Then $M$ is generated and the bundle $E_M$, defined as in \eqref{eq2.1}, has rank 2 and degree 6. If $E_M$ is not stable, then 
we have a diagram
$$
\xymatrix{
0 \ar[r] & E_M^*  \ar[r] \ar[d]& H^0(M) \otimes \cO_C \ar[d] \ar[r] & M \ar[r] \ar@{=}[d] & 0\\
0  \ar[r] & N \ar[r] \ar[d] & F  \ar[r] \ar[d] & M \ar[r]  & 0\\
& 0 & 0 &&
}
$$
with $N$ a line bundle with $d_N \leq -3$. It follows that $F$ is a generated rank-2 bundle with $d_F \leq 3$ and $h^0(F) \geq 3$. 
This cannot exist on a non-trigonal curve, so $E_M \in B(2,6,3)$.
\end{proof}

\begin{cor} \label{c5.5}
$B(2r,6r-1,3r-1) \neq \emptyset$ for any $r \geq 1$. 
\end{cor}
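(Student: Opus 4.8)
The plan is to repeat the proof of Proposition~\ref{p4.12} essentially word for word, using Proposition~\ref{p5.4} in place of Proposition~\ref{p4.11}; the only point needing attention is the production of $r$ \emph{pairwise non-isomorphic} bundles in $B(2,6,3)$. The proof of Proposition~\ref{p5.4} shows that $E_M\in B(2,6,3)$ for every line bundle $M$ of degree $6$ with $h^0(M)=3$, and there are infinitely many such $M$: for an effective divisor $N$ of degree $2$ the line bundle $M:=K_C(-N)$ has $\deg M=6$ and, since $C$ is not hyperelliptic (so $h^0(N)=1$), Riemann--Roch gives $h^0(M)=h^0(K_C(-N))=3$; distinct such $N$ give distinct $M$ because $C$ carries no $g^1_2$.

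Next I would check that $M$ is recovered from $E_M$, so that distinct $M$ yield non-isomorphic $E_M$. Dualising the evaluation sequence \eqref{eq2.1} gives $0\to M^*\to H^0(M)^*\otimes\cO_C\to E_M\to0$; taking cohomology and using $h^0(M^*)=0$ together with $h^0(E_M)=3$ (Proposition~\ref{p1.6}(ii) with $g=5$) shows that $H^0(M)^*\to H^0(E_M)$ is an isomorphism. Hence $H^0(M)^*\otimes\cO_C\to E_M$ is the evaluation map of $E_M$ and $M^*$ is its kernel, so $M$ is determined by $E_M$. We may therefore choose pairwise non-isomorphic bundles $E_1,\dots,E_r\in B(2,6,3)$.

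Finally I would argue exactly as in Proposition~\ref{p4.12}: fix $p\in C$ and a surjection $E_1\oplus\cdots\oplus E_r\to\CC_p$ with every component $E_i\to\CC_p$ non-zero, and let $E$ be its kernel, so that $E$ has rank $2r$, degree $6r-1$ and $h^0(E)\ge3r-1$. Since the $E_i$ are stable of slope $3$ and pairwise non-isomorphic, the only subbundles of $E_1\oplus\cdots\oplus E_r$ of slope $\ge3$ are the partial direct sums $\bigoplus_{i\in S}E_i$; a short integrality argument (a proper subbundle $F$ of $E$ has rank $m\le2r-1$, so $\deg F\ge m\mu(E)=3m-\tfrac{m}{2r}$ forces $\deg F\ge3m$, whence the saturation of $F$ in $\bigoplus E_i$ has slope $3$ and $F$ itself is one of these partial sums) then shows that a subbundle of $E$ of slope $\ge3$ would have to be a non-empty partial direct sum; but no such sum lies in $E=\ker(\bigoplus E_i\to\CC_p)$, since the map is non-zero on each $E_i$. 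Hence $E$ is stable and $E\in B(2r,6r-1,3r-1)$. The only genuinely new step compared with the trigonal case is the injectivity of $M\mapsto E_M$, which I expect to be the main --- though still routine --- obstacle.
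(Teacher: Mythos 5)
Your argument is correct and follows the paper's strategy (elementary transformation of a direct sum of $r$ pairwise non-isomorphic bundles in $B(2,6,3)$, exactly as in Proposition~\ref{p4.12}); the only point of divergence is how the $r$ non-isomorphic bundles are produced. The paper disposes of this in one line by noting that $\beta(2,6,3)=2>0$, so the non-empty locus $B(2,6,3)$ has positive dimension and hence contains infinitely many pairwise non-isomorphic bundles; no explicit family is needed. You instead exhibit an explicit one-parameter family $E_M$ with $M=K_C(-N)$, $N$ effective of degree $2$, and prove injectivity of $M\mapsto E_M$. That step is fine, but can be shortened: from the dualised evaluation sequence $0\to M^*\to H^0(M)^*\otimes\cO_C\to E_M\to 0$ one reads off $\det E_M\simeq M$ directly, so distinct $M$ give non-isomorphic $E_M$ without invoking Proposition~\ref{p1.6}(ii) or identifying the evaluation map of $E_M$. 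Your explicit construction buys a concrete description of the bundles used (and works without appealing to the determinantal lower bound on $\dim B(n,d,k)$), at the cost of a little extra verification; the paper's route is shorter but less explicit. The stability argument at the end is the same in both, and your integrality remark correctly fills in the detail left implicit in Proposition~\ref{p4.12}.
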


\begin{proof}
The proof is exactly the same as for Proposition \ref{p4.12}, using Proposition \ref{p5.4} and the fact that $\beta(2,6,3)>0$. 
\end{proof}

\begin{cor} \label{c5.6}
$B(2r,6r+1,3r) \neq \emptyset$ for any $r \geq 1$. 
\end{cor}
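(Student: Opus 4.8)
The plan is to follow the pattern of Proposition \ref{p4.12} and Corollary \ref{c5.5}, but using a \emph{positive} elementary transformation in place of a negative one. Since $\beta(2,6,3)>0$, Proposition \ref{p5.4} shows that $B(2,6,3)$ has positive dimension, so I can choose $r$ pairwise non-isomorphic stable bundles $E_1,\dots,E_r\in B(2,6,3)$ and set $V=E_1\oplus\cdots\oplus E_r$, a polystable bundle of rank $2r$, degree $6r$ and slope $3$ with $h^0(V)\ge 3r$. For a point $p\in C$ I would then consider an elementary transformation
$$
0\ra V\ra E\ra\CC_p\ra 0
$$
corresponding to a nonzero class $\xi\in\Ext^1(\CC_p,V)\cong V_p$ (the fibre of $V$ at $p$); this produces a bundle $E$ of rank $2r$ and degree $6r+1$ with $h^0(E)\ge h^0(V)\ge 3r$. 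Everything then reduces to choosing $\xi$ so that $E$ is stable, for then $E\in B(2r,6r+1,3r)$.

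For the stability I would argue as follows. A destabilizing subbundle $F\subsetneq E$ has $\mu(F)\ge\mu(E)=3+\frac1{2r}$; it cannot lie inside $V$ (which is semistable of slope $3$), so the map $F\ra\CC_p$ is surjective and $F':=F\cap V$ has $n_{F'}=n_F$ and $d_{F'}=d_F-1$. Semistability of $V$ gives $d_{F'}\le 3n_F$, and comparing this with $d_F\ge(3+\frac1{2r})n_F$ (using that $d_F\in\ZZ$ and $0<\frac{n_F}{2r}<1$ since $1\le n_F\le 2r-1$) forces $d_F=3n_F+1$; hence $\mu(F')=3$ and $F'$ is a saturated subbundle of $V$. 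As the $E_i$ are pairwise non-isomorphic and stable of slope $3$, the only such subbundles of $V$ are the partial direct sums $\bigoplus_{i\in S}E_i$ (the fact already used in Proposition \ref{p4.12}), with $S$ proper because $n_{F'}<2r$. Finally, the existence of such an $F$ inside $E$ is equivalent to the splitting of the sequence $0\ra V/F'\ra E/F'\ra\CC_p\ra 0$ obtained from the extension above by pushing forward along $V\twoheadrightarrow V/F'=\bigoplus_{i\notin S}E_i$; the class of this sequence is the image of $\xi$ under $V_p\ra(\bigoplus_{i\notin S}E_i)_p$, so such an $F$ can exist only if $\xi$ lies in the subspace $(\bigoplus_{i\in S}E_i)_p\subseteq V_p$. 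Since $S$ is proper this subspace is proper, and there are only finitely many subsets $S$, so a general $\xi\in V_p$ avoids all of them; for such $\xi$ the bundle $E$ is stable.

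The main obstacle is, as usual in these constructions, the proof of stability — specifically, ruling out subbundles that are "part of $V$ plus part of the torsion quotient". The two technical points are the classification of the slope-$3$ subbundles of $V$ as partial direct sums (this is exactly why I insist the $E_i$ be non-isomorphic, and hence why I need $B(2,6,3)$ to have positive dimension) and the identification of the set of bad extension classes $\xi$ with a finite union of proper linear subspaces of the fibre $V_p$, which makes the final dimension count immediate. A routine point to check is that a nonzero class in $\Ext^1(\CC_p,V)$ genuinely produces a locally free sheaf $E$, which is standard for elementary transformations on a smooth curve.
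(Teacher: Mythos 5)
Your proposal is correct and is essentially identical to the paper's own proof: the paper likewise takes a general positive elementary transformation $0 \ra E_1 \oplus \cdots \oplus E_r \ra E \ra \CC_p \ra 0$ with the $E_i$ pairwise non-isomorphic bundles in $B(2,6,3)$ and asserts that the general extension is stable. You have in addition written out the stability verification (reduction to slope-$3$ subbundles of the polystable kernel, identification with partial direct sums, and the finite union of proper subspaces of the fibre), which the paper leaves implicit.
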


\begin{proof}
We consider extensions
$$
0 \ra E_1 \oplus \cdots \oplus E_r \ra E \ra \CC_p \ra 0,
$$
where $E_1, \dots, E_r \in B(2,6,3)$ and are pairwise non-isomorphic. The general extension gives a stable bundle.
\end{proof}

\begin{lem} \label{l5.6}
Let $C$ be a curve of genus $5$ and Clifford index $2$. Let $L_i = K_C(-p_i)$ for $1 \leq i \leq r$, where $p_1, \dots, p_r$ are distinct points of $C$. 
Then every proper subbundle $F$ of $E_{L_1} \oplus \cdots \oplus E_{L_r}$, which is not isomorphic to a partial direct sum of factors of $E_{L_1} \oplus \cdots \oplus E_{L_r}$, has 
$$
d_F \leq \frac{7}{3} n_F - 1.
$$
\end{lem}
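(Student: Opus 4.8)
\emph{Plan.} The natural approach is induction on $r$, projecting onto the last factor. Write $E=E'\oplus E_{L_r}$ with $E'=\bigoplus_{i<r}E_{L_i}$, let $\pi\colon E\to E_{L_r}$ be the projection, and for a proper subbundle $F\subset E$ put $F''=F\cap E'=\ker(\pi|_F)$ and $\rho=\rk\pi(F)\in\{0,1,2,3\}$. Then $F''$ is a subbundle of $E'$ (saturatedness of $F''$ in $E'$ follows from that of $F$ in $E$), $\pi(F)$ is a subsheaf of $E_{L_r}$ of rank $\rho$, and $n_F=n_{F''}+\rho$, $d_F=d_{F''}+d_{\pi(F)}$. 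Semistability of $E'$ gives $d_{F''}\le\frac{7}{3}n_{F''}$ always, while the inductive hypothesis gives the sharper $d_{F''}\le\frac{7}{3}n_{F''}-1$ as soon as $F''$ is not isomorphic to a partial direct sum of the factors of $E'$.

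Before the induction I would record three facts about $E_L$ for $L=K_C(-p)$, which by Lemma \ref{l5.1} is stable of rank $3$, degree $7$ with $h^0(E_L)=4$, all valid when $\Cliff(C)=2$: $(0)$ the bundles $E_{L_i}$ are pairwise non-isomorphic, since by the dual of \eqref{eq2.1} (together with $h^0(L^*)=0$ and $h^0(E_L)=h^0(L)$) the line bundle $L^*$ is the kernel of the evaluation map $H^0(E_L)\otimes\cO_C\to E_L$, so that $E_L$ determines $L$, hence $p$; $(a)$ $E_L$ has no line subbundle of degree $2$; $(b)$ $E_L$ has no rank-$2$ subbundle of degree $4$. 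For $(a)$: a degree-$2$ line subbundle $N$ would make $Q=E_L/N$ a bundle of rank $2$ and degree $5$ whose quotient line bundles, being quotients of $E_L$, have degree $\ge3$ by stability, so $Q$ is semistable, hence stable (odd degree), with $h^0(Q)\ge h^0(E_L)-h^0(N)\ge3$ (as $h^0(N)\le1$) and $h^0(Q)\le\frac{7}{2}$ by Proposition \ref{p2.2}(i); thus $h^0(Q)=3$, contradicting Lemma \ref{l5.3} since $C$ is not trigonal. For $(b)$: a rank-$2$ subbundle $P$ of degree $4$ would make $E_L/P$ a degree-$3$ line bundle, with $h^0\le1$ as $C$ is not trigonal, so $h^0(P)\ge3$; and $P$ is semistable, since a line subbundle of $P$ of degree $\ge3$ would destabilize $E_L$; this contradicts Proposition \ref{pln2}(ii).

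Granting these, the induction runs as follows. For $r=1$, a subbundle $F$ of $E_{L_1}$ not isomorphic to a partial direct sum has rank $1$ or $2$, and by stability together with $(a)$, resp.\ $(b)$, $d_F\le1$, resp.\ $d_F\le3$, both at most $\frac{7}{3}n_F-1$. For the inductive step, the saturation of $\pi(F)$ in $E_{L_r}$ is a rank-$\rho$ subbundle, so by stability of $E_{L_r}$ and $(a)$, $(b)$ one gets $d_{\pi(F)}\le1,3,7$ for $\rho=1,2,3$, with $d_{\pi(F)}=7$ only if $\pi(F)=E_{L_r}$. If $\rho=0$ then $F=F''$ is a proper subbundle of $E'$, not isomorphic to a partial direct sum, and the inductive hypothesis applies directly. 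If $\rho=1$ or $2$, then already $d_F\le\frac{7}{3}n_{F''}+d_{\pi(F)}\le\frac{7}{3}n_F-\frac{4}{3}$, resp.\ $\le\frac{7}{3}n_F-\frac{5}{3}$. If $\rho=3$ and $d_{\pi(F)}\le6$, then $d_F\le\frac{7}{3}n_{F''}+6=\frac{7}{3}n_F-1$. The one remaining case is $\rho=3$ with $\pi(F)=E_{L_r}$: if $F''$ is not isomorphic to a partial direct sum, then $d_F\le(\frac{7}{3}n_{F''}-1)+7=\frac{7}{3}n_F-1$; and if it is, then $d_F=\frac{7}{3}n_{F''}+7=\frac{7}{3}n_F$, so $\mu(F)=\mu(E)=\frac{7}{3}$, whence $F$ is a direct summand of $E$ and, by $(0)$, isomorphic to a direct sum of some of the $E_{L_i}$ — a partial direct sum, against the hypothesis.

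The main obstacle is this last case: one must force a slope-$\frac{7}{3}$ subbundle of $E$ to be a partial direct sum of factors, which is exactly where the distinctness $(0)$ of the $E_{L_i}$ and the semisimplicity of $E$ among semistable bundles of slope $\frac{7}{3}$ enter. The other essential ingredients are $(a)$ and $(b)$, which are what make the estimates for $\rho=1,2$ strong enough — plain stability would only give $d_F\le\frac{7}{3}n_F-\frac{1}{3}$, resp.\ $-\frac{2}{3}$ — and which are where the hypothesis $\Cliff(C)=2$ is used, through Lemmas \ref{l5.1} and \ref{l5.3} and Propositions \ref{pln2} and \ref{p2.2}.
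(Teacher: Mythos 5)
Your proof is correct and follows essentially the same route as the paper: induction on $r$ via the projection to a single factor, with the base case ($d_F\le 1$ in rank $1$, $d_F\le 3$ in rank $2$) obtained from stability of $E_L$, Lemma \ref{l5.3}, Proposition \ref{pln2}(ii) and $\Cliff(C)=2$. The only difference is that you treat explicitly the subcase $\rk\,\pi=3$ with $\pi(F)=E_{L_r}$ and kernel a partial direct sum (forcing $\mu(F)=\frac{7}{3}$ and hence $F$ itself a partial direct sum, via polystability and the pairwise non-isomorphy of the $E_{L_i}$), a point the paper's displayed inequality for $\rk\,\pi=3$ passes over silently.
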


\begin{proof}
The proof is by induction on $r$. If $r=1$, then $n_F = 1$ or 2.

If $n_F = 1$, then $d_F \leq 1$, since otherwise $E_{L_1}/F$ is a quotient of $E_{L_1}$ of rank 2 and degree $ 5$ with $h^0 \geq 3$. 
It is easy to see that this quotient must be stable, which contradicts Lemma \ref{l5.3}.
If $n_F = 2$, then $d_F \leq 3$, since otherwise $E_{L_1}$ would have a quotient bundle of rank 1 and degree $\leq 3$ with $h^0 \geq 2$. This contradicts 
$\Cliff(C) = 2$.

Now suppose $r \geq 2$ and the lemma is proved for $r-1$ factors. Consider the projection $\pi: F \ra E_{L_1}$. We can assume without loss of generality that $\pi \neq 0$.
If $\rk \;\pi = 3$, then by induction
$$
d_F \leq 7 + \frac{7}{3}(n_F -3) -1 = \frac{7}{3}n_F -1.
$$
If $\rk \;\pi = 1$, then
$$
d_F \leq 1 + \frac{7}{3}(n_F -1) < \frac{7}{3} n_F - 1.
$$
If $\rk \;\pi = 2$, then
$$
d_F \leq 3 + \frac{7}{3}(n_F -2) < \frac{7}{3}n_F - 1.
$$
This completes the proof.
\end{proof}

\begin{prop} \label{p5.7}
Let $C$ be a curve of genus $5$ and Clifford index $2$. Suppose $r \geq 1$, $p \in C$ and $L_1, \dots, L_r$ are as in Lemma  \ref{l5.6}. Let
$$
0 \ra E_{L_1} \oplus \cdots \oplus E_{L_r} \ra E \ra \CC_p \ra 0
$$
be the extension classified by  $(e_1,\ldots,e_r)$, where the $e_i\in\Ext(\CC_p,E_{L_i})$ are all non-zero. Then $E$ is stable. Hence $$B(3r,7r+1, 4r) \neq \emptyset.$$
\end{prop}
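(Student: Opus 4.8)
The plan is to establish that any bundle $E$ arising this way is stable; then, since $E$ has rank $3r$ and degree $7r+1$ and contains $G:=\bigoplus_{i=1}^r E_{L_i}$ as a subsheaf, Lemma \ref{l5.1} gives $h^0(E)\ge h^0(G)=4r$, so that $E\in B(3r,7r+1,4r)$. Throughout I would work with the defining sequence $0\to G\to E\to\CC_p\to0$, so $\mu(E)=\frac{7r+1}{3r}>\frac73$, noting that $E$ is locally free because every $e_i\ne0$.

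To check stability it suffices to show $\mu(F)<\mu(E)$ for every subbundle $F\subset E$ with $0<n_F<3r$. The key move is to pass to $F':=F\cap G$ (formed inside $E$): since $F$ is saturated in $E$, $F'$ is saturated in $G$, so it is a proper subbundle of $G$ with $n_{F'}=n_F$, and $d_F\le d_{F'}+1$ because $F/F'$ embeds in $E/G=\CC_p$. I would then feed $F'$ into Lemma \ref{l5.6} (its hypotheses hold since the $p_i$ are distinct). In the first alternative there, $F'$ is not a partial direct sum, so $d_{F'}\le\frac73 n_{F'}-1$, giving $d_F\le\frac73 n_F$ and $\mu(F)\le\frac73<\mu(E)$.

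The main case, and the one I expect to be the real obstacle, is the second alternative: $F'\cong\bigoplus_{i\in S}E_{L_i}$ for some $S$, necessarily with $1\le|S|\le r-1$ since $0<n_{F'}<3r$. If $F=F'$ then $\mu(F)=\frac73<\mu(E)$. Otherwise $F\to\CC_p$ is surjective, so $F+G=E$ while $F\cap G=\bigoplus_{i\in S}E_{L_i}$; quotienting by $F'$, the images of $F$ and $G$ in $E/F'$ intersect trivially and span, so $E/F'\cong\CC_p\oplus\bigoplus_{i\notin S}E_{L_i}$, and hence the sequence $0\to\bigoplus_{i\notin S}E_{L_i}\to E/F'\to\CC_p\to0$ splits. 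But this is precisely the extension obtained from $(e_i)_i$ by pushing out along $G\to\bigoplus_{i\notin S}E_{L_i}$, i.e.\ the one classified by $(e_i)_{i\notin S}$; splitting forces $e_i=0$ for all $i\notin S$, contradicting the hypothesis since $S\ne\{1,\dots,r\}$. So this case cannot occur, $\mu(F)<\mu(E)$ always, and $E$ is stable.

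The bookkeeping around the saturation claim ($F'$ saturated in $G$), the degree inequality $d_F\le d_{F'}+1$, and the identification of the quotient extension with $(e_i)_{i\notin S}$ all require a little care, but the genuine content is concentrated in the last paragraph, where the hypothesis that every $e_i$ is non-zero is exactly what rules out a destabilizing subbundle surjecting onto $\CC_p$.
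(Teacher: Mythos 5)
Your proposal is correct and follows the same strategy as the paper: intersect a would-be destabilizing subbundle $F$ with $G=\bigoplus_i E_{L_i}$ and feed $F\cap G$ into Lemma \ref{l5.6}. In fact your write-up is more complete than the paper's two-line proof, which simply asserts that Lemma \ref{l5.6} gives $d_F\le\frac73 n_F$ for every proper subbundle $F$ of $E$; that bound fails exactly in the case where $F\cap G$ is a partial direct sum and $F$ surjects onto $\CC_p$ (there $d_F=\frac73 n_F+1$), and it is your pushout/splitting argument in the final paragraph --- the only point where the hypothesis that all $e_i\ne 0$ is used --- that correctly excludes this case.
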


\begin{proof}
It follows from Lemma \ref{l5.6} that any proper subbundle $F$ of $E$ has $d_F \leq \frac{7}{3}n_F$. Hence $E$ is stable.  
\end{proof}

\begin{prop} \label{p5.10}
Let $C$ be a curve of genus $5$ and Clifford index $2$. Then $B(3,9,4) \neq \emptyset$ and $B(3,12,4) \neq \emptyset$. 
\end{prop}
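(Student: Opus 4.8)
The plan is to establish $B(3,9,4)\neq\emptyset$ by one elementary transformation, and then to deduce $B(3,12,4)\neq\emptyset$ immediately by tensoring with a line bundle of degree $1$. So I would begin with a bundle $E_1\in B(3,8,4)$, which exists by Proposition \ref{p5.7} with $r=1$; recall from that construction that $E_1$ is a general elementary transformation
$$
0\ra E_L\ra E_1\ra\CC_{q_0}\ra0,\qquad L=K_C(-a),
$$
of the rank-$3$, degree-$7$ bundle $E_L$, which has $h^0(E_L)=4$ and is stable by Lemma \ref{l5.1}. Then take a general elementary transformation
$$
0\ra E_1\ra E\ra\CC_p\ra0
$$
with $p\in C$ general and a general surjection. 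The bundle $E$ has rank $3$, degree $9$ and $h^0(E)\geq h^0(E_1)=4$, so everything reduces to proving that (for a suitable choice of the data) $E$ is stable.

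For this, suppose $F\subsetneq E$ is a saturated destabilising subbundle, so $\mu(F)\geq3$. Put $F':=F\cap E_1$; after saturating in $E_1$ (which can only increase its degree) $F'$ is a line bundle or a rank-$2$ bundle with $d_{F'}\geq d_F-1$, and since $E_1$ is stable of slope $\frac{8}{3}$ the numerics leave only $(n_F,d_F)=(1,3)$ with $F'\subset E_1$ a sub-line bundle of degree $2$, or $(n_F,d_F)=(2,6)$ with $F'\subset E_1$ a rank-$2$ subbundle of degree $5$. The second possibility is impossible on a curve of Clifford index $2$: intersecting such an $F'$ with $E_L$ and saturating produces a rank-$2$ subbundle of $E_L$ of degree exactly $4$ (degree $\geq5$ would contradict stability of $E_L$), which is semistable (a line subbundle of degree $\geq3$ would again contradict stability of $E_L$), has a line-bundle quotient of degree $3$ and hence $h^0\leq1$, so it satisfies $h^0\geq h^0(E_L)-1=3$ — contradicting Proposition \ref{p1.6}(ii).

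It remains to rule out sub-line bundles of degree $2$ in $E_1$. By the same pattern, $E_L$ has none: one of degree $\geq3$ contradicts stability, while one of degree $2$ would have a rank-$2$, degree-$5$ quotient, which is automatically stable (odd degree) and would satisfy $h^0\geq h^0(E_L)-1=3$, contrary to Lemma \ref{l5.3} together with Proposition \ref{p1.6}(ii). Hence a degree-$2$ sub-line bundle of $E_1$ would have to be of the form $F_0'(q_0)$ with $F_0'\subset E_L$ a sub-line bundle of degree $1$ lifted through $q_0$; but the degree-$1$ sub-line bundles of $E_L$ form at most a one-dimensional family (they are of the form $\cO_C(q)$, the inclusion into $E_L$ being essentially unique because $h^0(E_L(-q))$ is bounded via Proposition \ref{pln2}(i)), so a general $E_1$ in the family of Proposition \ref{p5.7} contains none of them. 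Thus the general $E$ is stable and $B(3,9,4)\neq\emptyset$. Finally, for any $q\in C$ and any $E\in B(3,9,4)$ the bundle $E\otimes\cO_C(q)$ has rank $3$, degree $12$, $h^0\geq4$ and is stable, so $B(3,12,4)\neq\emptyset$.

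The main obstacle is exactly this stability analysis: the borderline cases allowed by the slope inequalities must be eliminated either outright — using the Clifford index $2$ hypothesis through Proposition \ref{p1.6}(ii) and Lemma \ref{l5.3} together with the stability of $E_L$ — or by the single mild dimension count pitting the one-dimensional family of degree-$1$ sub-line bundles of $E_L$ against the three-dimensional parameter space of the elementary transformations involved. One must also take care that the special bundles $E_L$ and $E_1$, whose $h^0$ is larger than generic, do not carry unexpectedly large families of low-degree subbundles; this is what the Clifford-index-$2$ bounds control. (Alternatively, $B(3,12,4)$ can be obtained directly as a general elementary transformation of $E_L$ by a torsion sheaf of length $5$, with a longer but entirely analogous case analysis.)
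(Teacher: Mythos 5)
Your construction is a cousin of the paper's, but the paper's is cleaner and avoids genericity issues entirely: it sets $E=\ker(E_L(q)\ra\CC_q)$, so that $E_L\subset E\subset E_L(q)$ automatically, whence $h^0(E)\ge4$, and stability is immediate from Lemma \ref{l5.6} because every subbundle of $E$ is a subsheaf of $E_L(q)$ and so has degree at most $1+1=2$ in rank $1$ and $3+2=5$ in rank $2$; this works for \emph{every} $q$, with no dimension count. Your upward route (two successive length-$1$ elementary transformations) is viable, and your treatment of rank-$2$ subbundles is correct (it essentially re-proves the rank-$2$ case of Lemma \ref{l5.6}), as is the final twist giving $B(3,12,4)\neq\emptyset$.

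However, there is a genuine gap in the rank-$1$ case. You need the saturated degree-$1$ line subbundles of $E_L$ to form at most a $1$-dimensional family, and neither half of your justification holds as stated. That such subbundles are effective (of the form $\cO_C(q)$) is true but unproved: it needs the observation that if $h^0(M)=0$ then $H^0(E_L)$ injects into $H^0(E_L/M)$, giving $h^0(E_L/M)\ge4$ for a semistable rank-$2$, degree-$6$ bundle and contradicting Proposition \ref{p1.6}(ii), which bounds $h^0$ by $\frac{10}{3}$. More seriously, Proposition \ref{pln2}(i) only gives $h^0(E_L(-q))\le3$, so ``essentially unique inclusion'' does not follow from it; a priori each $q$ could contribute a $\PP^2$ of subsheaves and the family could be large enough to meet every choice of elementary transformation. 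What actually saves the count is that $H^0(E_L)\cong H^0(L)^*$ has dimension exactly $4$ and generates $E_L$, so the evaluation map $H^0(E_L)\ra E_L|_q$ is surjective and $h^0(E_L(-q))=1$ for every $q$. With that replacement your argument closes, but as written the key dimension bound is unsupported.
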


\begin{proof}
Let $L = K_C(-p)$ for some $p \in C$. Consider an exact sequence
$$
0 \ra E \ra E_L(q) \ra \CC_q \ra 0
$$
for any $q \in C$. Since $E_L$ as a subsheaf of $E_L(q)$ consists of local sections vanishing at $q$, we have a sheaf inclusion $E_L \hra E$. This implies that $h^0(E) \geq 4$. 

Now by Lemma \ref{l5.6}, any subbundle of $E_L(q)$ of rank 1 has degree $\leq 2$ and any subbundle of rank 2 has degree $\leq 5$. These do not contradict 
the stability of $E$. So $E \in B(3,9,4)$. Moreover, $E(r) \in B(3,12,4)$ for any $r \in C$.
\end{proof}

\begin{rem}
 {\rm 
The stable bundles constructed in this section exist on any curve of genus 5 and Clifford index 2, in particular on a bielliptic curve of genus 5.
Many of them have $k > \frac{d}{2}$, so lie outside the scope of Ballico's result (Proposition \ref{p1.7}). This means that the bundles and their sections are not lifted
from the corresponding elliptic curve.
}
\end{rem}

\section{Extremal bundles, bundles of low rank and $k=n+1$}\label{ext}

Let $C$ be a non-hyperelliptic curve of genus 5. The bundles $D(K_C), E_L$ and $Q$ are extremal in the sense that they take the maximal value of $\frac{h^0}{n}$
for bundles of the same slope (see the figures in Section \ref{bn}). By Proposition \ref{pln2}(iii), $D(K_C)$ is the only stable bundle representing the point $(2, \frac{5}{4})$
in the BN-map. By Proposition \ref{p3.4}, the only bundles on the line in the BN-map joining $(2,\frac{5}{4})$ to $(\frac{7}{3},\frac{4}{3})$ are $D(K_C)$ and the bundles $E_L$.

When $C$ is trigonal, then $U$ and $T$ are also extremal. By Remark \ref{r4.8}, $T$ is the only stable bundle representing the point $(3,2)$.

\begin{prop} \label{p6.1}
Let $C$ be a non-hyperelliptic curve of genus $5$ and $E$ a semistable bundle of rank $n$ and degree $d$ with slope $\mu(E) > \frac{7}{3}$. Then 
$$
h^0(E) < d-n,
$$
unless $C$ is trigonal and $E \simeq \oplus_i U$ or $E \simeq \oplus_i T$.
\end{prop}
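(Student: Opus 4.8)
The plan is to reduce at once to the trigonal case and then induct on $n$, analysing the value of $\mu=\mu(E)$. By Proposition \ref{p5.1} we may assume $C$ is trigonal, with trigonal bundle $T$ and associated rank-$2$ bundle $U=D(K_C\otimes T^*)$; recall $h^0(U)=d_U-n_U=3$ and $h^0(T)=d_T-n_T=2$. Most of the slope range is handled by results already available: for $\frac52<\mu<3$ one checks $\max\{\frac n2+\frac{3d}8,\frac{3n}2\}<d-n$, so Proposition \ref{p4.3} gives strict inequality; for $3<\mu\le4$ one checks $\frac25(2n+d)<d-n$ and $2n<d-n$, so Proposition \ref{p4.6} does; and for $\mu>4$ one passes to $K_C\otimes E^*$ (of slope $8-\mu<4$) and applies Propositions \ref{p4.6}, \ref{p5.6}, \ref{p3.2} or \ref{pln2} with Serre duality according to its slope. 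Thus equality can only occur for $\mu=\frac52$, $\mu=3$ or $\frac73<\mu<\frac52$. The first is immediate: there $n=2r$, $d=5r$, so $h^0(E)\le 3r=d-n$ by Proposition \ref{p5.6}, with equality forcing $E\simeq\bigoplus_{i=1}^rU$ by Proposition \ref{prop4.6}.

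For $\mu=3$: if $E$ is stable then $h^0(E)\le\frac{8n}5<2n=d-n$ by Lemma \ref{l5.8}; if $E$ is strictly semistable then $h^0(E)\le h^0(\gr E)=\sum_ih^0(Q_i)$ over the stable slope-$3$ Jordan--H\"older factors $Q_i$, and by Lemma \ref{l5.8} (for $n_{Q_i}\ge2$) and Clifford's theorem with uniqueness of the trigonal bundle (for $n_{Q_i}=1$) this sum is $<2n$ unless every $Q_i\simeq T$, i.e. $\gr E=T^{\oplus n}$. In that case $E$ is an iterated self-extension of $T$; applying Proposition \ref{p2.2}(ii) to $E/T$ together with the inductive hypothesis yields $E/T\simeq T^{\oplus(n-1)}$ whenever $h^0(E)=2n$, and a non-split extension $0\to T\to E\to T^{\oplus(n-1)}\to 0$ would have nonzero connecting map $H^0(T^{\oplus(n-1)})\to H^1(T)$, since the dual multiplication map $H^0(T)\otimes H^0(K_C\otimes T^*)\to H^0(K_C)$ is surjective (its kernel $H^0(K_C\otimes T^{*2})$ is $1$-dimensional as $h^0(T^2)=3$). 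Hence the extension splits and $E\simeq T^{\oplus n}$.

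The substantive case is $\frac73<\mu<\frac52$, where I claim the inequality is always strict. Suppose not and take $E$ of least rank with $h^0(E)=d-n$. Since $h^0(E)>n+\frac14(d-n)$, Lemma \ref{l5.5}(i) gives a subbundle $E_L\hookrightarrow E$, $L=K_C(-p)$, and $E':=E/E_L$ has rank $n-3$, degree $d-7$, slope $\frac{d-7}{n-3}>\frac73$, and $h^0(E')\ge h^0(E)-h^0(E_L)=d_{E'}-n_{E'}$. For any subbundle $G\subseteq E'$ its preimage in $E$ has slope $\le\mu$, whence $\mu(G)\le\mu+\frac{3\mu-7}{n_G}<3$; together with the fact that the smallest Harder--Narasimhan slope of $E'$ is $\ge\mu$ (the corresponding quotient being a quotient of $E$), every Harder--Narasimhan factor $Q_i$ of $E'$ is semistable of slope in $(\frac73,3)$, so $h^0(Q_i)\le d_{Q_i}-n_{Q_i}$ by Proposition \ref{p5.6}. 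The chain $\sum_ih^0(Q_i)\le\sum_i(d_{Q_i}-n_{Q_i})=d_{E'}-n_{E'}\le h^0(E')\le\sum_ih^0(Q_i)$ forces $h^0(Q_i)=d_{Q_i}-n_{Q_i}$ for all $i$; by Proposition \ref{p4.3} this excludes $\mu(Q_i)\in(\frac52,3)$ and by minimality of $n$ it excludes $\mu(Q_i)\in(\frac73,\frac52)$, so $\mu(Q_i)=\frac52$ and $Q_i\simeq\bigoplus U$ by Proposition \ref{prop4.6}. Since the $\mu(Q_i)$ strictly decrease there is a single factor, so $E'\simeq\bigoplus_{j=1}^{r'}U$ and $2d=5n-1$. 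Moreover $h^0(E)=h^0(E_L)+h^0(E')$, so all sections of $E'$ lift and the extension class in $\bigoplus_j\Ext^1(U,E_L)$ has vanishing connecting map; but $H^0(U)\otimes H^0(K_C\otimes E_L^*)\to H^0(U\otimes K_C\otimes E_L^*)$ is surjective by Lemma \ref{lem4.5}, so each component vanishes, the sequence splits, and $E\simeq E_L\oplus\bigoplus U$ --- impossible, since $\bigoplus U$ would then be a subbundle of slope $\frac52>\mu$, contradicting semistability. The hardest part will be exactly this range: one must control the possibly non-semistable quotient $E/E_L$ and, at the end, rule out a non-split extension of $\bigoplus U$ by $E_L$ all of whose sections lift, which is where Lemma \ref{lem4.5} enters.
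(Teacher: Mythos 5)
Your argument is correct, and it reaches the stated conclusion by a genuinely different organization of essentially the same ingredients. The paper runs a single induction on rank modelled on the proof of Proposition \ref{p5.6}: it extracts a stable subbundle $F$ of maximal slope with semistable quotient $G$, observes that equality $h^0(E)=d-n$ forces $F\in\{E_L,U,T\}$ and $G\simeq\oplus_i T$ or $\oplus_i U$ with all sections of $G$ lifting, and then disposes of the resulting cases using Remark \ref{r4.8}, Propositions \ref{p4.3} and \ref{prop4.6} and Lemmas \ref{l4.4} and \ref{lem4.5}. You instead stratify by slope: the ranges $(\frac52,3)$, $(3,4]$ and $\mu>4$ are eliminated by direct numerical comparison via Propositions \ref{p4.3}, \ref{p4.6} and Serre duality; the points $\mu=\frac52$ and $\mu=3$ are handled by Proposition \ref{prop4.6} and by a Jordan--H\"older plus splitting argument (your direct proof that $\gr E=T^{\oplus n}$ forces $E\simeq T^{\oplus n}$, via surjectivity of $H^0(T)\otimes H^0(K_C\otimes T^*)\to H^0(K_C)$, replaces the paper's appeal to Remark \ref{r4.8}); and only the range $(\frac73,\frac52)$ requires a minimal-counterexample argument, where you peel off $E_L$ (Lemma \ref{l5.5}) and control the possibly non-semistable quotient through its Harder--Narasimhan factors, ending with the same application of Lemma \ref{lem4.5} that the paper makes. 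A benefit of your route is that the configuration $F\simeq E_L$, $G\simeq\oplus_i T$ never occurs (your bound $\mu(G)<3$ for subbundles of $E/E_L$ excludes it), so Lemma \ref{l4.4} is not needed; the cost is the extra Harder--Narasimhan bookkeeping. The only blemishes are cosmetic: at $\mu=3$ the stable case $n=1$ (i.e.\ $E\simeq T$) is not covered by Lemma \ref{l5.8} as you state it, though it is an allowed exception, and the reduction for $\mu>4$ is only sketched, but both are routine to repair.
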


\begin{proof}
For $\Cl(C)=2$, this is Proposition \ref{p5.1}. For $C$ trigonal, the proof follows that of Proposition \ref{p5.6}. For $n= 1,2,3$ the result is clear. Continuing with the proof, we see that either $h^0(E) < d-n$ or there is an exact sequence
$$
0 \ra F \ra E \ra G \ra 0
$$
with $F \simeq E_L, U$ or $T$ and $G \simeq \oplus_i T$ or $\simeq \oplus_i U$ and all sections of $G$ must lift.

If $F \simeq T$ and $G \simeq \oplus_i T$, then $E \simeq \oplus_i T$ by Remark \ref{r4.8}. If $F \simeq U$ and $G \simeq \oplus_i T$, then 
$\frac{5}{2} < \mu(E) < 3$ and the result follows from Proposition \ref{p4.3}.
If $F \simeq U$ and $G \simeq \oplus_i U$, then $E \simeq \oplus_i U$ by Proposition \ref{prop4.6}.

If $F \simeq E_L$ and $G \simeq \oplus_i T$, then by Lemma \ref{l4.4} not all sections of $G$ lift.
Finally, if $F \simeq E_L$ and $G \simeq \oplus_i U$, then by Lemma \ref{lem4.5} not all sections of $G$ lift. This completes the inductive step and hence the proof.
\end{proof}

\begin{prop} \label{p6.2}
Suppose $0 < d < 16$. Then $B(2,d,k) \neq \emptyset$ if and only if 
$$
\beta(2,d,k) := 17 - k(k-d+8) \geq 0
$$
with the following exceptions.
\begin{enumerate}
\item[(i)]  $(d,k) = (2,2)$ or $(14,8)$, in which case $\widetilde B(2,d,k) \neq \emptyset$ but
$B(2,d,k) = \emptyset$;
\item[(ii)] $B(2,8,4) \neq \emptyset$ for $C$ general and for $C$ trigonal, but possibly not for all $C$ with $\Cl(C) =2$; $\widetilde B(2,8,4) \neq \emptyset$ for all $C$;
\item[(iii)] if $C$ is trigonal, $B(2,5,3) \neq \emptyset$ and $B(2,11,6) \neq \emptyset$.
\end{enumerate}
\end{prop}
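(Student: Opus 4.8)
The plan is to establish the $B(2,d,k)$ classification for $0<d<16$ by combining the general upper bounds of Section~\ref{back} and Section~\ref{nonh} (which force $\beta(2,d,k)\ge0$ outside a short list of potential exceptions) with explicit existence constructions, most of which are already available in the excerpt. The degrees $d$ and $14-d$ are exchanged by Serre duality, so it suffices to treat $0<d\le8$; we must, however, keep track of the asymmetry between $B$ and $\widetilde B$ in the boundary cases.

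First I would dispose of the range $0<d<4$ using Proposition~\ref{pln2}(i): here $\widetilde B(2,d,k)\ne\emptyset$ iff $k-2\le\frac15(d-2)$, i.e.\ $k\le2$, and $B(2,d,k)\ne\emptyset$ under the same condition except for $(n,d,k)=(2,2,2)$; one checks directly that $k\le2$ is equivalent to $\beta(2,d,k)\ge0$ in this range (with $(d,k)=(2,2)$ sitting on the boundary $\beta=17-2\cdot2=13>0$, accounting for exception (i)). Next, for $d=4$ (slope $2$) apply Proposition~\ref{pln2}(ii),(iii): $\widetilde B(2,4,k)\ne\emptyset$ iff $k\le\frac{2\cdot5}{4}$, i.e.\ $k\le2$, while $B(2,4,k)\ne\emptyset$ iff $k\le\frac{2\cdot6}5=2$ (the exceptional case $(g-1,2g-2,g)$ requires $n=4$, so is irrelevant here); again this matches $\beta\ge0$. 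The case $d=14$ is then the Serre dual of $d=0$ — wait, rather $d=14$ dualizes to degree $14-14=0$; more usefully $d=8$ is self-dual and $d=14$ dualizes to $d=0$, which is vacuous for $k\ge1$, so the genuine content is $4<d\le8$. Actually the relevant self-dual and near-self-dual cases are where the exceptions cluster, so I would treat $5\le d\le 8$ individually.

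For $4<d\le8$ (slope in $(2,4]$), the upper bound side comes from Propositions~\ref{p2.2}, \ref{p5.6}, \ref{p4.3}, \ref{p4.6} and, for $\Cl(C)=2$, Proposition~\ref{p5.1}, together with Proposition~\ref{p3.4} which pins down $B(3,7,4)$ but here in rank~$2$ rules out $h^0$ exceeding the generic bound. Concretely: $d=5$ has $h^0\le3$ by Proposition~\ref{p2.2}(i) and $\beta(2,5,3)=17-3\cdot6=-1<0$, so $B(2,5,3)$ should be empty unless $C$ is trigonal — which is exactly where Lemma~\ref{l5.3} produces the bundle $U=D(K_C\otimes T^*)$, giving exception (iii); for $\Cl(C)=2$, Proposition~\ref{p5.1} gives the strict inequality $h^0<d-n=3$, i.e.\ $h^0\le2$, consistent with $B(2,5,3)=\emptyset$. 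For $d=6$, $\beta(2,6,3)=17-3\cdot5=2\ge0$ and $B(2,6,3)\ne\emptyset$ by Proposition~\ref{p4.11} (trigonal) or Proposition~\ref{p5.4} ($\Cl=2$), i.e.\ on every non-hyperelliptic $C$; higher $k$ has $\beta<0$ and is excluded by Proposition~\ref{p2.2}(i). For $d=7$, $\beta(2,7,4)=17-4\cdot5=-3<0$, and indeed $B(2,7,4)=\emptyset$ for $C$ trigonal by Proposition~\ref{prop4.12}; for $\Cl(C)=2$ one argues analogously (a rank-$2$ degree-$7$ bundle with $h^0=4$ would, via Lemma~\ref{l2.3} with a tetragonal $N$, force a forbidden sub-line-bundle), and $B(2,7,3)$ has $\beta=17-3\cdot4=5\ge0$ and is non-empty e.g.\ by Proposition~\ref{p3.6}(iii) or Corollary~\ref{c2.2}. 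For $d=8$ (slope~$4$), $\beta(2,8,4)=17-4\cdot4=1\ge0$; existence of $B(2,8,4)$ is Proposition~\ref{prop4.13} for trigonal $C$ and \cite[Section~3]{bf} for general $C$, while for arbitrary $\Cl(C)=2$ only $\widetilde B(2,8,4)\ne\emptyset$ is asserted (Proposition~\ref{p3.10}(i) with $n=2$), giving exception (ii); $B(2,8,5)$ has $\beta=17-5\cdot5=-8<0$ and is killed by Proposition~\ref{p4.6} ($h^0\le 2n=4$ at slope $4$).

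Finally, I would invoke Serre duality to transport everything to $8<d<16$: $B(2,d,k)\ne\emptyset$ iff $B(2,14-d,k-d+8)\ne\emptyset$, and $\beta$ is invariant under this substitution, so the exceptions map as $(5,3)\leftrightarrow(9,6)$... — rather, $d=5,k=3$ dualizes to $d=9$, $k=3-5+8=6$, giving the claimed $B(2,11,6)$? Let me recompute: the dual of $(2,5,3)$ is $(2,14-5,3-5+8)=(2,9,6)$; the dual of $(2,6,3)$ is $(2,8,5)$ — that would be nonempty, contradicting $\beta<0$, so I must re-examine the duality formula. Actually Serre duality sends $E$ of degree $d$ with $h^0=k$ to $K_C\otimes E^*$ of degree $2\cdot(2g-2)-d=16-d$ (rank~$2$!), with $h^0(K_C\otimes E^*)=h^1(E)=k-d+n(g-1)=k-d+8$. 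So $(2,6,3)\mapsto(2,10,5)$, with $\beta(2,10,5)=17-5\cdot3=2\ge0$, consistent; $(2,5,3)\mapsto(2,11,6)$ with $\beta(2,11,6)=17-6\cdot4=-7<0$ — this is exactly exception (iii)'s second clause. Good, so the correct degree transform is $d\mapsto16-d$, and I would restate the case analysis accordingly, treating $0<d\le8$ and dualizing. \textbf{The main obstacle} is the $\Cl(C)=2$ analysis at $d=7$ and $d=8$: ruling out $B(2,7,4)$ needs an argument parallel to Proposition~\ref{prop4.12} but without the trigonal bundle, and the genuine gap in our knowledge — whether $B(2,8,4)\ne\emptyset$ for \emph{all} curves of Clifford index~$2$ — forces exception (ii) to be stated conditionally; I would make this honest rather than attempt to close it.
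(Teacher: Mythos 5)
Your overall strategy is exactly the paper's: establish the bound side from Propositions \ref{pln2}, \ref{p2.2}, \ref{p4.6}, \ref{p5.1}, Lemma \ref{l5.3} and Theorems \ref{thm4.7}, \ref{thm5.2}, the existence side from Lemma \ref{l5.3}, Propositions \ref{p4.11}, \ref{p5.4}, \ref{prop4.13} and Remark \ref{r5.3}, and transport everything to $8<d<16$ by Serre duality (the paper merely organises the case analysis by $k$ rather than by $d$). Your self-correction of the duality to $d\mapsto 16-d$, $k\mapsto k-d+8$ is right, and the identification of the three exceptional families is correct.

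Two of your specific citations, however, would fail as written and need replacing. First, at $d=6$ you claim that $k=4$ is ``excluded by Proposition \ref{p2.2}(i)''; but that proposition only gives $h^0\le\frac12(6+2)=4$, which does not rule out $B(2,6,4)$. You need Theorem \ref{thm4.7}(v)$'$ (equivalently Lemma \ref{l5.8}, giving $k\le\frac{8n}5=3.2$) in the trigonal case and Theorem \ref{thm5.2}(iii) (giving $k\le 2+\frac43$) when $\Cl(C)=2$. Second, for $B(2,7,3)\ne\emptyset$ you cite Proposition \ref{p3.6}(iii) or Corollary \ref{c2.2}, neither of which applies: the corollary requires $d\le 3n=6$ and Proposition \ref{p3.6}(iii) requires $d=2n+1=5$. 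The correct reference is Proposition \ref{p3.7} ($3n<d<4n$, $k\le d-2n$). Two smaller points: $\beta(2,2,2)=17-2\cdot 8=1$, not $13$ (your conclusion is unaffected); and for $\Cl(C)=2$, $d=7$, no new argument parallel to Proposition \ref{prop4.12} is needed, since Theorem \ref{thm5.2}(iii) already gives $k\le 3$ there. With these repairs the proof is complete and coincides with the paper's.
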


\begin{proof}
Suppose first that $0 < d \leq 8$. For $k = 1$ and 2, the result follows from Proposition 
\ref{pln2}. For $k=3$, $\beta(2,d,k) \geq 0$ is equivalent to $d \geq 6$. The result follows from 
Lemma \ref{l5.3} and Propositions \ref{p4.11} and \ref{p5.4}. 
For $k=4$, $\beta(2,d,k) \geq 0$ is equivalent to $d \geq 8$ and the result follows from 
Propositions \ref{prop4.12} and \ref{prop4.13}, Theorem \ref{thm5.2}(iii) and Remark \ref{r5.3}.
For $k \geq 5$, $\beta(2,d,k) < 0$ for all $d \leq 8$ and all $\widetilde B(2,d,k) = \emptyset$ by Theorems \ref{thm4.7} and \ref{thm5.2}.

For $d > 8$ we use Serre duality and Riemann-Roch.
\end{proof}

\begin{prop} \label{p6.3}
Suppose $0< d < 24$. Then $B(3,d,k) \neq \emptyset$ if and only if
$$
\beta(3,d,k) := 37 - k(k-d + 12) \geq 0
$$
with the following exceptions.
\begin{enumerate}
\item[(i)] $(d,k) = (3,3)$ or $(21,12)$, in which case $\widetilde B(3,d,k) \neq \emptyset$ but 
$B(3,d,k) = \emptyset$;
\item[(ii)] if $C$ is trigonal, $\widetilde B(3,9,4) \neq \emptyset$ and $\widetilde B(3,15,7) \neq \emptyset$, 
but it is possible that $B(3,9,4) = \emptyset$ and $B(3,15,7) = \emptyset$;
\item[(iii)] if $\Cl(C) = 2$, it is possible that $B(3,9,5)\neq \emptyset$ and $B(3,15,8) \neq \emptyset$;
\item[(iv)] $B(3,10,5)$ and $B(3,14,7)$  might be empty;
\item[(v)] $\widetilde B(3,12,5) \neq \emptyset$; moreover, $B(3,12,5) \neq \emptyset$ for $C$ general, but there might be curves for which $B(3,12,5) = \emptyset$;
\item[(vi)] $\widetilde B(3,12,6) \neq \emptyset$, but $B(3,12,6)$ might be empty;
\item[(vii)] if $C$ is trigonal, $B(3,10,6),B(3,11,6),B(3,13,7)$ and $B(3,14,8)$ might be non-empty.
\end{enumerate}
\end{prop}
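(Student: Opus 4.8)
This is the rank-$3$ counterpart of Proposition \ref{p6.2}, and I would prove it in the same spirit. First, by Riemann--Roch and Serre duality, $B(3,d,k)\ne\emptyset$ if and only if $B(3,24-d,k-d+12)\ne\emptyset$ (whenever $k-d+12\ge1$, noting that $E$ is stable iff $K_C\otimes E^*$ is); the quantity $\beta(3,d,k)$ and the whole list of exceptions are invariant under this involution, which exchanges $(3,3)\leftrightarrow(21,12)$, $(3,9,4)\leftrightarrow(3,15,7)$, $(3,9,5)\leftrightarrow(3,15,8)$, $(3,10,5)\leftrightarrow(3,14,7)$, $(3,10,6)\leftrightarrow(3,14,8)$, $(3,11,6)\leftrightarrow(3,13,7)$ and fixes $(3,12,5)$ and $(3,12,6)$. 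Hence it suffices to treat $0<d\le n(g-1)=12$; for $13\le d\le23$ with $k\le d-12$ every stable bundle of degree $d$ already has $h^0\ge\chi=d-12\ge k$ and $\beta(3,d,k)>0$, so the statement is automatic there. For $0<d\le2n=6$ (recall that $C$ is non-hyperelliptic throughout this section) a direct numerical check shows that $\beta(3,d,k)\ge0$ is equivalent to the conditions of Proposition \ref{pln2}(i),(iii), the only discrepancy being $(3,3,3)$, for which $\beta=1\ge0$ but $B(3,3,3)=\emptyset$ while $\widetilde B(3,3,3)\ne\emptyset$ (e.g. $\cO(p_1)\oplus\cO(p_2)\oplus\cO(p_3)$); this is exception (i).

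For $2n<d\le4n$, i.e. $7\le d\le12$, I would first dispose of the upper bounds. If $\widetilde B(3,d,k)\ne\emptyset$, then $k$ is bounded by Theorem \ref{thm4.7} (for $C$ trigonal) or Theorem \ref{thm5.2} (for $\Cl(C)=2$), with the sharper bound of Theorem \ref{thm4.7}(v$'$) available at $d=3n$ when one only wants $B(3,d,k)\ne\emptyset$. Comparing these bounds with the sign of $\beta(3,d,k)$ shows that $B(3,d,k)=\emptyset$ whenever $\beta(3,d,k)<0$, except for those $(d,k)$ for which the relevant bound still permits $k$: namely $(3,9,5)$ for $\Cl(C)=2$ (Theorem \ref{thm5.2}(iii) only gives $k\le5$, whereas $(\textrm{v}')$ rules this out in the trigonal case) and $(3,10,6)$, $(3,11,6)$ for $C$ trigonal (Theorem \ref{thm4.7}(vi) only gives $k\le6$). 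As there is no construction deciding these, they are recorded, together with their Serre duals, as exceptions (iii) and (vii).

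For existence, since $B(3,d,k+1)\subset B(3,d,k)$ it is enough, for each $d$ with $7\le d\le12$, to exhibit a stable bundle realising the largest non-exceptional $k$ with $\beta(3,d,k)\ge0$: $B(3,7,4)=\{E_L\}$ by Lemma \ref{l5.1} and Proposition \ref{p3.4}; $B(3,8,4)\ne\emptyset$ by Proposition \ref{p4.14} ($C$ trigonal) and Proposition \ref{p5.7} with $r=1$ ($\Cl(C)=2$); $B(3,9,3)\ne\emptyset$ by Proposition \ref{p4.4}, while $B(3,9,4)\ne\emptyset$ for $\Cl(C)=2$ by Proposition \ref{p5.10}; $B(3,10,4)\ne\emptyset$ and $B(3,11,5)\ne\emptyset$ by Proposition \ref{p3.7}; and $B(3,12,4)\ne\emptyset$ by Proposition \ref{p5.10} ($\Cl(C)=2$), respectively by an elementary transformation of a suitable direct sum of three distinct degree-$5$ line bundles via Proposition \ref{p2.5}(ii) ($C$ trigonal). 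The remaining cases $(3,9,4)$ for $C$ trigonal, $(3,10,5)$, $(3,12,5)$ and $(3,12,6)$ (and their Serre duals) have $\beta\ge0$ but admit no stable construction --- notably the $E_L(q)$-argument of Proposition \ref{p5.10} fails on a trigonal curve because $E_L$ surjects onto $U$ --- so only the weaker assertions are recorded: $\widetilde B(3,9,4)\ne\emptyset$ via $T\oplus E_2$ with $E_2\in B(2,6,3)$ (Proposition \ref{p4.11}), $\widetilde B(3,12,5),\widetilde B(3,12,6)\ne\emptyset$ by Proposition \ref{p3.10}(i), and $B(3,12,5)\ne\emptyset$ for $C$ general by Proposition \ref{p3.10}(iii) (and for $C$ bielliptic by Proposition \ref{p1.7}(ii)); these give exceptions (ii), (iv), (v), (vi). Serre duality then transports everything to $13\le d\le23$.

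The main obstacle is not any individual estimate but the verification that the upper bounds of Theorems \ref{thm4.7} and \ref{thm5.2} and the existence results of Sections \ref{nonh}--\ref{cliff} meet \emph{exactly} along the list (i)--(vii), i.e. that the list of exceptions is both complete and disjoint from the generic behaviour. The delicate points occur at the borderline degrees $d=9,10,12$, where establishing stability of the bundles produced by elementary transformations or by dual span constructions relies on the subbundle slope estimates of Section \ref{trig} (Lemmas \ref{l4.1}, \ref{l5.4}) and Section \ref{cliff} (Lemma \ref{l5.6}), and where the distinctions among trigonal curves, curves of Clifford index $2$, general curves and bielliptic curves must be tracked carefully throughout.
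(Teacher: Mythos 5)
Your overall strategy is the paper's own: reduce to $0<d\le12$ by Serre duality (the paper does this at the end rather than the start, but the involution $(d,k)\mapsto(24-d,k-d+12)$ is exactly how it handles $d>12$), settle $d\le6$ by Proposition \ref{pln2}, obtain all the emptiness statements from Theorems \ref{thm4.7} and \ref{thm5.2} (with (v)$'$ at $d=9$), and assemble the existence statements from Lemma \ref{l5.1} and Propositions \ref{p3.4}, \ref{p4.14}, \ref{p5.7}, \ref{p4.4}, \ref{p5.10}, \ref{p3.7} and \ref{p3.10}. Almost all of your citations and case checks agree with the paper's, including the identification of which $(d,k)$ with $\beta<0$ survive the upper bounds (exceptions (iii), (vii)) and which $(d,k)$ with $\beta\ge0$ lack a stable construction (exceptions (ii), (iv), (v), (vi)).

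There is, however, one step that fails as written: your construction of $B(3,12,4)$ for $C$ trigonal by an elementary transformation $0\to E\to L_1\oplus L_2\oplus L_3\to\tau\to0$ with the $L_i$ of degree $5$. On a trigonal curve of genus $5$ there is no $g^2_5$ (a line bundle $L$ with $d_L=5$, $h^0(L)=3$ would force $K_C\otimes L^*$ to be the trigonal bundle, which has degree $3$, not $19-d_L$ consistent with this), so $h^0(L_i)\le2$ and $h^0(\oplus L_i)\le6$; since $\tau$ must have length $3$ to reach degree $12$, a general such $E$ has only $h^0(E)\ge3$. Forcing the evaluation map $H^0(\oplus L_i)\to\tau$ to have rank $\le2$ is a non-generic condition, and Proposition \ref{p2.5}(ii) gives stability only for some (in effect, general) $\tau$, so you cannot simultaneously guarantee $h^0\ge4$ and stability without further argument. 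The paper instead gets $B(3,12,4)\ne\emptyset$ on a trigonal curve from Proposition \ref{prop2.4} (equivalently, tensoring a bundle in $B(3,3,2)$ by $T$ and applying Lemma \ref{l2.3}), which is the clean fix. Everything else in your outline matches the paper's proof in substance.
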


\begin{proof}
Suppose first that $0 < d \leq 12$. For $k = 1,2,3$ the result follows from Proposition \ref{pln2}.

For $k = 4$, $\beta(3,d,k) \geq 0$ is equivalent to $d \geq 7$. We have $B(3,d,4) = \emptyset$ 
for $d \leq 6$ by Proposition \ref{pln2}, $B(3,7,4) \neq \emptyset$ by Lemma \ref{l5.1} and
$B(3,8,4) \neq \emptyset$ by Propositions \ref{p4.14} and \ref{p5.7}.
If $\Cl(C) = 2$, then $B(3,9,4) \neq \emptyset$ and $B(3,12,4) \neq \emptyset$ by Proposition \ref{p5.10}.
For $C$ trigonal, we see that 
$\widetilde B(3,9,4) \neq \emptyset$ by taking the direct sum of a bundle in $B(2,6,3)$ and a
line bundle  of degree 3 and $h^0 =1$. 
It is possible that $B(3,9,4) = \emptyset$, but $B(3,12,4) \neq \emptyset$ by Proposition \ref{prop2.4}. 
For $d \geq 10, \, d \neq 12$, $B(3,d,4) \neq \emptyset$ by Proposition \ref{p3.7} for any $C$. 

For $k = 5$,  $\beta(3,d,k) \geq 0$ is equivalent to $d \geq 10$.  $\widetilde B(3,d,5) = \emptyset$ 
for $d \leq 8$ by Theorem \ref{thm4.7}(i)-(iv) and Theorem \ref{thm5.2}. $B(3,9,5)= \emptyset$ 
if $C$ is trigonal by Theorem \ref{thm4.7}(v)$'$. We do not know whether this is true for any curve of Clifford index 2. 
$B(3,11,5) \neq \emptyset$ in all cases by Proposition \ref{p3.7}. For $d = 12$, use Proposition \ref{p3.10}.

For $k = 6$, $\beta(3,d,k) \geq 0$ is equivalent to $d \geq 12$. If $\Cl(C) = 2$, $\widetilde B(3,d,6) = \emptyset$ for $d \leq 11$ by Theorem \ref{thm5.2}
and for $d \leq 8$ if $C$ is trigonal by Theorem \ref{thm4.7}. $\widetilde B(3,12,6) \neq \emptyset$ in all cases by Proposition \ref{p3.10}. 
If $C$ is trigonal, then $B(3,9,6) = \emptyset$ by Theorem \ref{thm4.7}(v)$'$.

For $k \geq 7$, $\beta(3,d,k) < 0$ for all $d \leq 12$ and $\widetilde B(3,d,k) = \emptyset$ by Theorems \ref{thm4.7} and \ref{thm5.2}.

For $d > 12$ we use Serre duality and Riemann-Roch.
\end{proof}

It would be possible to extend this analysis to $k=4$, but the details would be complicated. However, there is one case where there is a simple answer.

\begin{prop}  \label{p6.4} Let $C$ be a non-hyperelliptic curve of genus $5$. Then
$\widetilde B(4,10,5) \neq \emptyset$. If $\Cl(C) = 2$, then $B(4,10,5) \neq \emptyset$ and $\widetilde B(4,10,5)=B(4,10,5)$. 
\end{prop}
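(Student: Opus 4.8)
The plan is to treat trigonal curves and curves of Clifford index $2$ separately, since every non-hyperelliptic curve of genus $5$ is one or the other. If $C$ is trigonal, the polystable bundle $U\oplus U$ (with $U=D(K_C\otimes T^*)$ as in Lemma \ref{l5.3}) has rank $4$, degree $10$ and $h^0=6$, so $[U\oplus U]\in\widetilde B(4,10,5)$. Thus for the first assertion it remains to treat the case $\Cl(C)=2$, and this will be subsumed in the proof of the second assertion.

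Assume now $\Cl(C)=2$. I would first observe that $\widetilde B(4,10,5)=B(4,10,5)$. Indeed, if $[E]\in\widetilde B(4,10,5)$ then $\gr E$ is a direct sum of stable bundles of slope $\frac52$; such a bundle must have even rank, so $\gr E$ is either stable of rank $4$ or the direct sum of two stable bundles of rank $2$ and degree $5$. By Proposition \ref{p1.6}(ii) together with Lemma \ref{l5.3}, a stable bundle of rank $2$ and degree $5$ on $C$ has $h^0\le2$, so in the second case $h^0(\gr E)\le4$, a contradiction. Hence $E=\gr E$ is stable; the reverse inclusion is trivial.

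So it suffices to exhibit a stable $E$ with $n_E=4$, $d_E=10$, $h^0(E)\ge5$. Take $E'$ to be one of the stable bundles of $B(3,8,4)$ produced by Proposition \ref{p5.7} with $r=1$, fitting in $0\to E_L\to E'\to\CC_p\to0$ with $L=K_C(-p)$; by Proposition \ref{p5.1}, $h^0(E')=4$. Let $N$ be an effective line bundle of degree $2$, so that $h^0(N)=1$ (as $C$ is non-hyperelliptic) and hence $h^1(N)=3$. I would consider extensions
$$
0\to N\to E\to E'\to0 .
$$
Here $\Ext^1(E',N)=H^1((E')^*\otimes N)$ has dimension $14$ by Riemann--Roch, since $(E')^*\otimes N$ is stable of slope $-\frac23$. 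The connecting homomorphism gives a linear map $\Ext^1(E',N)\to\Hom(H^0(E'),H^1(N))$ whose target has dimension $h^0(E')\,h^1(N)=12<14$, so it has a nonzero kernel; choosing $e\ne0$ in this kernel, every section of $E'$ lifts and $h^0(E)=h^0(N)+h^0(E')=5$. Note that $e\ne0$ forces the extension to be non-split.

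The remaining point, which I expect to be the main obstacle, is the stability of $E$. The key inputs are the subbundle bounds for $E'$ coming from Lemma \ref{l5.6} with $r=1$: a line subbundle of $E_L$ has degree $\le1$ and a rank-$2$ subbundle of $E_L$ has degree $\le3$, whence (using $0\to E_L\to E'\to\CC_p\to0$) a line subbundle of $E'$ has degree $\le2$ and a rank-$2$ subbundle of $E'$ has degree $\le4$. For a proper subbundle $F\subset E$, set $F_N=F\cap N$ and let $\bar F$ be the image of $F$ in $E'$; then $d_F=d_{F_N}+d_{\bar F}$ with $d_{F_N}\le2$, and $d_{\bar F}$ is bounded by the above once $\bar F$ is replaced by its saturation in $E'$. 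A short case analysis on the ranks shows $\mu(F)\le\frac73$ in every case except when $F_N=0$ and $\rk F=3$; in that remaining case $F\oplus N\hookrightarrow E$ has torsion cokernel of length $8-d_F$, and $d_F=8$ would split the sequence, so $d_F\le7$ and again $\mu(F)\le\frac73<\frac52$. Hence $E$ is stable, which gives $B(4,10,5)\ne\emptyset$ and, with the second paragraph, $\widetilde B(4,10,5)=B(4,10,5)$ for $\Cl(C)=2$; together with the trigonal case this proves $\widetilde B(4,10,5)\ne\emptyset$ for every non-hyperelliptic curve of genus $5$.
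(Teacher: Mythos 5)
Your proof is correct, but it takes a genuinely different route from the paper's. The paper obtains $\widetilde B(4,10,5)\ne\emptyset$ in one stroke from $\beta(4,10,5)=10>0$ via \cite[Theorem 5.1]{bbn} on a general curve together with semicontinuity, and then, for $\Cl(C)=2$, deduces $B(4,10,5)\ne\emptyset$ purely indirectly: were it empty, Lemma \ref{lbb} would force $B(2,5,3)\ne\emptyset$ (the only possible Jordan--H\"older factors of a strictly semistable bundle of slope $\frac52$ being stable of rank $2$ and degree $5$), contradicting Lemma \ref{l5.3}; the same observation yields $\widetilde B(4,10,5)=B(4,10,5)$. You instead exhibit explicit bundles: $U\oplus U$ in the trigonal case, and for $\Cl(C)=2$ a non-split extension $0\to N\to E\to E'\to 0$ with $E'\in B(3,8,4)$ taken from Proposition \ref{p5.7}, with liftability of sections secured by the dimension count $\dim\Ext^1(E',N)=14>12=\dim\Hom(H^0(E'),H^1(N))$ and stability by the subbundle bounds inherited from Lemma \ref{l5.6} --- the one delicate case, a rank-$3$ subbundle meeting $N$ in zero, being excluded because degree $8$ would split the extension. (Your identification of $\widetilde B(4,10,5)$ with $B(4,10,5)$ is essentially the paper's argument.) What your route buys is independence from the external result of \cite{bbn} and a concrete member of $B(4,10,5)$; what it costs is the stability verification, which the paper's indirect argument avoids entirely. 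Both are sound.
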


\begin{proof}
Since $\beta(4,10,5) = 10 > 0$, $\widetilde B(4,10,5) \neq \emptyset$ for a general curve by \cite[Theorem 5.1]{bbn}. This holds for any curve by semicontinuity.
If $B(4,10,5) = \emptyset$, then by Lemma \ref{lbb}, $B(2,5,3) \neq \emptyset$. By Lemma \ref{l5.3}, this is not possible for $\Cl(C) = 2$. Indeed, in this case, there are no strictly semistable bundles of rank $4$ and degree $10$ with $h^0\ge5$.
\end{proof}

This result can be completed and partially extended to the case $k = n+1$ for all $n$.

\begin{prop} \label{p6.5}
Let $C$ be a non-hyperelliptic curve of genus $5$ and suppose $n \geq 2$.
\begin{enumerate}
 \item[(i)] If $\beta(n,d,n+1) < 0$, then $\widetilde B(n,d,n+1) = \emptyset$, except when $C$ is trigonal and $(n,d,n+1) = (2,5,3)$.
 \item[(ii)] If $\beta(n,d,n+1) \geq 0$, then 
 \begin{enumerate}
 \item[(a)] $\widetilde B(n,d,n+1) \neq \emptyset$,
 \item[(b)] $B(n,d,n+1) \neq \emptyset$, except possibly when 
 \begin{itemize}
  \item[] $n \geq 10,n\mbox{ even }, d = 2n+4$,
\item[] $n \geq 9,n\mbox{ divisible by }3, d = 2n+3$,
\item[] $n=8, d=18\mbox{ or }20$,
  \item[] $n=6, \,14 \leq d \leq 16$,
  \item[] $n=4,\,d=10,\; C  \; trigonal$, 
   \item[] $n=3,\,d=9$, $C$ trigonal.
   \end{itemize}
 \end{enumerate}
\end{enumerate}
\end{prop}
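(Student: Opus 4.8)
Proof proposal for Proposition \ref{p6.5}.

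The plan is to treat the two halves separately, reducing everything to results already in hand. For part (i), I would first note that when $C$ is trigonal and $(n,d,n+1)=(2,5,3)$ we have $B(2,5,3)\ne\emptyset$ by Lemma \ref{l5.3}, so that case is genuinely excluded; otherwise I want to show $\widetilde B(n,d,n+1)=\emptyset$ whenever $\beta(n,d,n+1)<0$. By Serre duality and Proposition \ref{pln2} we may restrict attention to $2n<d\le 4n$ (the cases $d\le 2n$ being covered by Proposition \ref{pln2}(i)--(iii), where $\beta<0$ forces emptiness up to the listed exceptions, and $d=2n$ where $k=n+1\le\frac{n(g+1)}g=\frac{6n}5$ only for small $n$). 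On this range I would simply substitute $k=n+1$ into the upper bounds of Theorem \ref{thm4.7} (trigonal) and Theorem \ref{thm5.2} (Clifford index $2$) and compare with the inequality $\beta(n,d,n+1)<0$, i.e. $n+1\ge \frac{1+n^2\cdot 4}{\,d-n+4n\,}$ rearranged; the point is that for $d$ well above $2n$ the Brill--Noether bound $\beta<0$ is weaker than the geometric bounds of those theorems, so $\widetilde B=\emptyset$, while the borderline values of $d$ near $2n$ produce exactly the trigonal exception $(2,5,3)$ (where $U$-type, i.e. the bundle of Lemma \ref{l5.3}, lives) and nothing else. This is a finite check once the slope ranges are split at $\frac{7n}3,\frac{5n}2,\frac{8n}3,3n,4n$.

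For part (ii)(a), non-emptiness of $\widetilde B(n,d,n+1)$ whenever $\beta(n,d,n+1)\ge 0$: since $\beta\ge 0$ and the rank is fixed, I would invoke the general existence results already cited — Corollary \ref{c2.2} and Propositions \ref{prop2.4}, \ref{p3.6}, \ref{p3.7}, \ref{p3.10} on the non-hyperelliptic side, together with Proposition \ref{p2.5} (elementary transformations) — to produce a semistable (indeed usually stable) bundle with $h^0\ge n+1$ for each admissible $(n,d)$. Concretely, $\beta(n,d,n+1)\ge 0$ with $d\le 4n$ constrains $d$ to a narrow band above $2n$; tensoring a bundle from $B(n,d',n+1)$ with $d'\le 2n$ (which exists by Proposition \ref{pln2}) by an effective line bundle, as in Corollary \ref{c2.2} and Proposition \ref{p3.7}, immediately gives the semistable existence, and where that fails one falls back on direct sums of line bundles of degree $4$ with $h^0=2$ (as in Proposition \ref{p3.10}(i)) or the dual-span bundles $E_L$, $D(K_C)$, $U$, $T$. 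The subtlety is only that semistability, not stability, is being claimed here, so direct sums are permitted and the argument is soft.

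Part (ii)(b) is the real work. The claim is that for $\beta(n,d,n+1)\ge 0$ one actually has a \emph{stable} bundle with $h^0\ge n+1$, except on the explicit finite list. I would obtain stable examples by the elementary-transformation machine of Proposition \ref{p2.5} and its refinements (Propositions \ref{p3.6}, \ref{p3.7}, \ref{p3.10}(iii), \ref{prop2.4}): start from $n$ pairwise non-isomorphic line bundles of degree $4$ with $h^0=2$ and perform $t$ elementary transformations to reach degree $d=4n-t$ while keeping $h^0\ge n+1$ for $t$ small, checking stability via the subbundle-slope estimates used in those propositions; for $d$ closer to $2n$ use instead a twist of $D(K_C)$ or iterated versions of Proposition \ref{p3.6}. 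The exceptional list is precisely where these constructions fail to guarantee stability: for $n=3$, $d=9$ trigonal we only get $\widetilde B$ (Proposition \ref{p6.3}(ii)); for $n=4$, $d=10$ trigonal likewise by Proposition \ref{p6.4} and Lemma \ref{lbb} (strict semistability would force $B(2,5,3)\ne\emptyset$, which is fine on a trigonal curve, blocking the stability argument); and the larger ranks $n=6,8$ and $n$ divisible by $2$ or $3$ arise because the congruence conditions needed to split $d$ as a sum of degree-$4$ pieces plus a controlled transformation length $t$ are not met, so no clean construction is available. The main obstacle, and the place where the most care is needed, is verifying stability in these borderline elementary transformations — precisely the point flagged in the introduction ("the only problem here being to prove stability") — and identifying exactly which residue classes of $n$ and values of $d$ slip through, which is why the exception list has the shape it does. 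I would organize this as: (1) reduce to $2n<d\le 4n$ by Serre duality; (2) for each congruence class of $n$, write $d=4n-t$ or a twist and apply the relevant existence proposition; (3) tabulate the finitely many $(n,d)$ with $\beta\ge 0$ not covered, and confirm each appears on the stated list, citing Propositions \ref{p6.3} and \ref{p6.4} for $n=3,4$.
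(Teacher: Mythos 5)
Your part (i) rests on a mechanism that does not work. On the range $2n<d\le 4n$ you propose to exclude $k=n+1$ by substituting into Theorems \ref{thm4.7} and \ref{thm5.2}, but those upper bounds never rule out $k=n+1$ on that range (for instance at $d=3n$ the trigonal bound is $k\le 2n$, and for $2n<d\le\frac{7n}{3}$ the bound $k\le n+\frac14(d-n)$ exceeds $n+1$ once $n\ge4$). What actually happens is arithmetic: $\beta(n,d,n+1)<0$ is equivalent to $d<n+5-\frac{5}{n+1}$, hence to $d\le n+4$ for $n\ge5$, and one checks that $\beta(n,d,n+1)>0$ whenever $d>2n$ and $n\ge3$. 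So the hypothesis of (i) confines $d$ to the range governed by Proposition \ref{pln2}, and the only admissible case with $d>2n$ is $(2,5,3)$ --- the stated exception. The paper's proof is exactly this observation; your ``finite check'' would eventually discover it, but the reason you give for emptiness on $2n<d\le4n$ is false, and Theorems \ref{thm4.7} and \ref{thm5.2} play no role.

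The more serious gaps are in part (ii). For (ii)(a) you assert the argument is ``soft'' because direct sums are permitted, but you give no construction for the pairs $(n,d)$ on the exception list of (ii)(b) --- for example a semistable bundle of rank $6$ and degree $15$ with $h^0\ge7$ on a curve of Clifford index $2$, where no rank-$2$, degree-$5$ summand with $h^0=3$ exists by Lemma \ref{l5.3}. The paper obtains (ii)(a) from the external result \cite[Theorem 5.1]{bbn} for a general curve together with semicontinuity; without that input your (ii)(a) is incomplete precisely where it is most needed. For (ii)(b) you are missing the reduction that actually generates the exception list: when $\gcd(n,d)=1$ there are no strictly semistable bundles, so $\widetilde B(n,d,n+1)=B(n,d,n+1)$ and (ii)(b) follows from (ii)(a). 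This disposes of $d=2n+3$ unless $3\mid n$ and of $d=2n+4$ unless $n$ is even, and --- combined with Proposition \ref{p3.6}(ii),(iii) for $d=2n+1,2n+2$, Proposition \ref{p3.7} for $3n<d<4n$, tensoring by effective line bundles for $2n+5\le d\le 3n$ and $d\ge4n$, and the separate low-rank analyses for $n=2,3,4$ --- accounts for every congruence condition in the list. Your alternative, elementary transformations of $\oplus_i L_i$ with $d_{L_i}=4$, cannot reach the critical degrees near $2n+1$: lowering the degree from $4n$ to $2n+1$ costs $2n-1$ transformations with no control keeping $h^0\ge n+1$. The shape of the exception list is therefore not a matter of splitting $d$ into degree-$4$ pieces; it records exactly where neither coprimality nor Propositions \ref{p3.6} and \ref{p3.7} apply.
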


\begin{proof}
Suppose first $n \geq 5$. Then $\beta(n,d,n+1) < 0$ is equivalent to $d \leq n+4$. (i) now follows from Proposition \ref{pln2} and (ii)(a) follows from
\cite[Theorem 5.1]{bbn} for $C$ general and hence for any $C$. 

For (ii)(b), $B(n,d,n+1) \neq \emptyset$ for $n+5 \leq d \leq 2n$ by Proposition \ref{pln2}. Tensoring by an effective line bundle gives the same result for $2n+5 \leq d \leq 3n$
and for $d = 4n$. For $3n+1 \leq d < 4n$, see Proposition \ref{p3.7}. For $d = 2n+1$, see Proposition \ref{p3.6}(iii) and for $d = 2n+2$ and $n \geq 9$, Proposition \ref{p3.6}(ii). For $d>4n$, tensor by an effective line bundle. Note also that, if $(n,d)=1$, then (ii)(b) follows from (ii)(a).

For $n=4$, we have $\beta(4,d,5) = 5d -40$. If $d \leq 7$, then $\widetilde B(4,d,5) = \emptyset$ by Proposition \ref{pln2}(i). For $d = 8$, note that $D(K_C) \in B(4,8,5)$.
For $d = 9,10,11$, see Propositions \ref{p3.6}(i), \ref{p6.4}, \ref{p4.12} and Corollary \ref{c5.5}. For $d \geq 12$, tensor by an effective line bundle.
For $d = 14$, when $C$ is trigonal, we still have $B(4,14,5) \neq \emptyset$ by Proposition \ref{prop2.4}.

For $n=3$, see Proposition \ref{p6.3} and for $n = 2$, see Proposition \ref{p6.2}.
\end{proof}

\section{BN-map for genus $5$}\label{bn}

The following figures are the most significant part of the BN-map for non-hyperelliptic curves of genus 
5.
The map plots $\lambda = \frac{k}{n}$ against $\mu = \frac{d}{n}$.

We begin with the trigonal case (Figures 1 and 2).

\begin{tikzpicture}
\path[draw][->] (2,-0.06) -- (2,8.1) node[pos=0.99,above] {$\lambda$} node[pos=.34,left] {$4/3$} node[pos=0.99,left] {$2$};
\path[draw][->] (0,0 ) -- (10.2,0) node[pos=1.02,right] {$\mu$} node[pos=.6,below] {$5/2$} node[pos=0.21,below] {$2$} node[pos=0.98,below] {$3$} node[pos=0,left] {$1$}; 
               
\path[draw] (10, 0) -- (10,8) node[pos=1,right] {$T$} node[pos=0.63,right] {$13/8$} node[pos=0.59,right] {$8/5$} node[pos=0.2,right] {$6/5$} 
node[pos=0.5,right] {\eqref{e4.1}, $K_C \otimes T^* \otimes D(K_C)^*$}
node[pos=0.4,right] {$7/5$}; 
\path[draw][dashed] (2,8) -- (10,8);
\path[draw,line width=1.2pt] (2,2) -- (4.67,2.67); 
\path[draw,line width=1.2pt] (4.67,2.67) -- (6,4); 
\path[draw,line width=1.2pt] (7.33,4) -- (10,5); 
\path[draw,line width=1.2pt] (6,4) -- (7.33,4);
\path[draw,line width=1.2pt] (10,1.6) -- (10,3.2);
\path[fill=black] (2,0)--(10,0)--(10,1.6)--cycle;
\path[fill=black] (0,0)--(2,0)--(2,1.6)--(0,1.2)-- cycle;
\fill (4.67,2.67) circle (2pt);
\fill (10,8) circle (2pt);
\fill (6,4) circle (2pt);
\fill (10,4) circle (2pt);
\fill (10,4.8) circle (1pt);
\fill (2,2) circle (2pt);
\path[draw][dashed] (6,0) -- (6,8);
\path[draw][dashed] (2,4) -- (10,4) node[pos=0.5,below]{$U$} node[pos=0,left]{$3/2$};
\path[draw][dashed] (4.67,0) -- (4.67,8) node[pos=0.33,below]{$E_L$} node[pos=0,below]{$7/3$};
\path[draw][dashed] (7.33,0) -- (7.33,8) node[pos=0,below]{$8/3$};
\path[draw][dashed] (2,2.67) -- (10,2.67);
\path[draw][dashed] (2,1.6) -- (10,1.6);
\path[draw][dashed] (2,2) -- (10,2) node[pos=0,left]{$D(K_C)$} node[pos=1,right]{$5/4$};

\fill (8,2) circle (1pt);
\fill (8.4,1.6) circle (1pt);
\fill (8.67,1.33) circle (1pt);

\fill (9,3) circle (1pt);
\fill (9.11,2.67) circle (1pt);
\fill (9.2,2.4) circle (1pt);
\fill (9.27,2.18) circle (1pt);
\fill (9.33,2) circle (1pt);
\fill (9.38,1.85) circle (1pt);
\fill (9.43,1.71) circle (1pt);
\fill (9.47,1.6) circle (1pt);
\fill (9.5,1.5) circle (1pt);
\fill (9.53,1.41) circle (1pt);
\fill (9.56,1.33) circle (1pt);

\fill (9.33,3.33) circle (1pt);
\fill (9.38,3.08) circle (1pt);
\fill (9.43,2.86) circle (1pt);
\fill (9.47,2.67) circle (1pt);
\fill (9.5,2.5) circle (1pt);
\fill (9.53,2.35) circle (1pt);
\fill (9.56,2.22) circle (1pt);
\fill (9.58,2.11) circle (1pt);
\fill (9.6,2) circle (1pt);
\fill (9.62,1.9) circle (1pt);
\fill (9.64,1.82) circle (1pt);
\fill (9.65,1.74) circle (1pt);
\fill (9.67,1.67) circle (1pt);
\fill (9.69,1.54) circle (1pt);
\fill (9.70,1.48) circle (1pt);
\fill (9.71,1.43) circle (1pt);

\fill (9.5,3.5) circle (1pt);
\fill (9.53,3.29) circle (1pt);
\fill (9.56,3.11) circle (1pt);
\fill (9.58,2.95) circle (1pt);
\fill (9.6,2.8) circle (1pt);
\fill (9.62,2.67) circle (1pt);
\fill (9.64,2.55) circle (1pt);
\fill (9.65,2.43) circle (1pt);
\fill (9.67,2.33) circle (1pt);
\fill (9.68,2.24) circle (1pt);
\fill (9.69,2.15) circle (1pt);
\fill (9.7,2.07) circle (1pt);
\fill (9.71,2) circle (1pt);
\fill (9.72,1.93) circle (1pt);
\fill (9.73,1.87) circle (1pt);
\fill (9.74,1.8) circle (1pt);
\path[draw,line width=1.2pt] (9.74,1.8) -- (10,0);

\fill (8.22,1) circle (1pt);
\fill (8.4,1) circle (1pt);
\fill (8.55,1) circle (1pt);
\fill (8.67,1) circle (1pt);

\fill (9.05,1.88) circle (1pt);
\fill (9.11,1.78) circle (1pt);
\fill (9.15,1.68) circle (1pt);
\fill (9.2,1.6) circle (1pt);
\fill (9.23,1.52) circle (1pt);
\fill (9.27,1.45) circle (1pt);
\path[draw,line width=1.2pt] (9.27,1.45) -- (10,0);

\fill (8.77,1.23) circle (1pt);
\fill (8.86,1.14) circle (1pt);
\fill (8.93,1.07) circle (1pt);
\fill (9,1) circle (1pt);

\fill (8.4,1.6) circle (1pt);
\fill (9.2,2.4) circle (1pt);
\fill (9.47,2.67) circle (1pt);

\fill (8,2) circle (1pt);
\fill (8.67,2.67) circle (1pt);
\fill (9,3) circle (1pt);
\fill (9.2,3.2) circle (1pt);
\fill (9.33,3.33) circle (1pt);
\fill (9.43,3.43) circle (1pt);
\fill (9.5,3.5) circle (1pt);
\fill (9.56,3.56) circle (1pt);
\fill (9.6,3.6) circle (1pt);
\path[draw,line width=1.2pt] (9.64,3.64) -- (10,4);

\fill (8,4) circle (1pt);

\fill (7.33,2.67) circle (1pt);

\fill (4,2) circle (1pt);
\fill (3.6,1.6) circle (1pt);
\fill (3.33,1.33) circle (1pt);
\fill (3,1) circle (1pt);
\fill (2.89,0.89) circle (1pt);
\fill (2.8,0.8) circle (1pt);
\fill (2.73,0.73) circle (1pt);
\fill (2.67,0.67) circle (1pt);
\fill (2.62,0.62) circle (1pt);
\fill (2.57,0.57) circle (1pt);
\fill (2.53,0.53) circle (1pt);
\fill (2.5,0.5) circle (1pt);
\fill (2.47,0.47) circle (1pt);
\path[draw,line width=1.2pt] (2,0) -- (2.47,0.47);

\fill (3,2) circle (1pt);
\fill (2.8,1.6) circle (1pt);
\fill (2.73,1.45) circle (1pt);
\fill (2.67,1.33) circle (1pt);
\fill (2.62,1.23) circle (1pt);
\fill (2.57,1.14) circle (1pt);
\fill (2.5,1) circle (1pt);
\fill (2.47,0.94) circle (1pt);
\fill (2.44,0.88) circle (1pt);
\path[draw,line width=1.2pt] (2,0) -- (2.44,0.88);

\fill (2.67,2) circle (1pt);
\fill (2.62,1.85) circle (1pt);
\fill (2.57,1.71) circle (1pt);
\fill (2.53,1.6) circle (1pt);
\fill (2.5,1.5) circle (1pt);
\fill (2.47,1.41) circle (1pt);
\fill (2.44,1.33) circle (1pt);
\fill (2.42,1.26) circle (1pt);
\fill (2.4,1.2) circle (1pt);
\fill (2.38,1.14) circle (1pt);
\path[draw,line width=1.2pt] (2,0) -- (2.38,1.14);

\fill (2.5,2) circle (1pt);
\fill (2.47,1.88) circle (1pt);
\fill (2.44,1.78) circle (1pt);
\fill (2.42,1.68) circle (1pt);
\fill (2.4,1.6) circle (1pt);
\fill (2.38,1.52) circle (1pt);
\fill (2.36,1.45) circle (1pt);
\fill (2.35,1.39) circle (1pt);
\path[draw,line width=1.2pt] (2,0) -- (2.35,1.39);

\fill (3.78,0.89) circle (1pt);
\fill (3.6,0.8) circle (1pt);
\fill (3.45,0.73) circle (1pt);
\fill (3.33,0.67) circle (1pt);
\fill (3.23,0.62) circle (1pt);
\fill (3.14,0.57) circle (1pt);
\fill (3.07,0.53) circle (1pt);
\fill (3,0.5) circle (1pt);
\fill (2.94,0.47) circle (1pt);
\fill (2.89,0.44) circle (1pt);
\fill (2.84,0.42) circle (1pt);
\fill (2.8,0.4) circle (1pt);
\path[draw,line width=1.2pt] (2,0) -- (2.8,0.4);

\fill (3.23,1.23) circle (1pt);
\fill (3.14,1.14) circle (1pt);
\fill (3.07,1.07) circle (1pt);
\fill (3,1) circle (1pt);
\fill (2.94,0.94) circle (1pt);
\fill (2.89,0.89) circle (1pt);
\fill (2.84,0.84) circle (1pt);
\fill (2.8,0.8) circle (1pt);
\path[draw,line width=1.2pt] (2,0) -- (2.8,0.8);

\fill (2.94,1.41) circle (1pt);
\fill (2.89,1.33) circle (1pt);
\fill (2.84,1.26) circle (1pt);
\fill (2.8,1.2) circle (1pt);
\fill (2.76,1.14) circle (1pt);
\fill (2.73,1.09) circle (1pt);
\fill (2.7,1.04) circle (1pt);
\fill (2.67,1) circle (1pt);
\path[draw,line width=1.2pt] (2,0) -- (2.67,1);

\fill (2.76,1.52) circle (1pt);
\fill (2.73,1.45) circle (1pt);
\fill (2.45,1.39) circle (1pt);
\fill (2.67,1.33) circle (1pt);
\fill (2.64,1.28) circle (1pt);
\fill (2.62,1.23) circle (1pt);
\fill (2.59,1.19) circle (1pt);
\fill (2.57,1.14) circle (1pt);
\path[draw,line width=1.2pt] (2,0) -- (2.57,1.14);

\fill (3.6,1.6) circle (1pt);
\fill (2.8,1.6) circle (1pt);
\fill (2.53,1.6) circle (1pt);
\fill (2.4,1.6) circle (1pt);
\fill (2.32,1.6) circle (1pt);
\fill (2.27,1.6) circle (1pt);
\fill (2.23,1.6) circle (1pt);
\path[draw,line width=1.2pt] (2,1.6) -- (2.23,1.6);

\path[draw] (2,2) .. controls (4.67,2.6) .. (10,4.5);

\path[draw,line width=0.2pt] (2.1,-1.3) -- (7.9,-1.3) node[pos=0.5,sloped,above] {Figure 1: $C$ trigonal, $2 \leq \mu \leq 3$};

\end{tikzpicture}

 \begin{tikzpicture}
\path[draw][->] (0,-0.05) -- (0,11)    node[pos=1,left] {$\lambda$}   node[pos=0,below] {$3$}  node[pos=0.725,left]{$2$}   node[pos=0.71,right]{$T$}
 node[pos=0.44,left]{$8/5$} ;
\path[draw][->] (-0.2,0 ) -- (8.2,0) node[pos=1.02,right] {$\mu$}  node[pos=0.98,below] {$4$} node[pos=0,left] {$1$}; 
               
\path[draw][dashed] (0,8) -- (8,8);
\path[draw] (8,0) -- (8,11.3)  node[pos=0.7,right]{$Q$} node[pos=0.99,right] {$12/5$};
\path[draw,line width=1.2pt] (0,8) -- (8,11.2);

\path[fill=black] (0,0)--(8,0)--(8,8)--cycle;
\path[shade,draw] (0,0)--(8,8)--(0,4.8)--cycle;

\fill (8,8) circle (2pt);
\fill (0,8) circle (2pt);

\path[draw] (0,4.49) .. controls (4,6.124) .. (8,8);

\path[draw,line width=0.2pt] (1.65,-1.1) -- (7.35,-1.1) node[pos=0.5,sloped,above] {Figure 2: $C$ trigonal, $3 < \mu \leq 4$}; 

\end{tikzpicture}

The thicker solid lines indicate 
the upper bounds for non-emptiness, given by Theorem \ref{thm4.7}.

The shaded areas consist of 
points $(\mu, \lambda)$ for which there exist $(n,d,k)$ with 
$$
\frac{d}{n} = \mu,\quad  \frac{k}{n} =
 \lambda \quad \mbox{and} \quad B(n,d,k) \neq \emptyset.
 $$
 The black areas are given by Proposition \ref{pln2}, Corollary \ref{c2.2} and Proposition  \ref{p3.7} 
and all $B(n,d,k)$ corresponding to points in these areas are non-empty. 
Note that the vertical line at $\mu = 4$ in Figure 2 is not included. The vertical line at $\mu = 3$ in Figure 1 
ending at $\lambda = \frac{7}{5}$ corresponds to Proposition \ref{p4.4}, but not all $B(n,3n,k)$ corresponding to points on this line are non-empty. 

In the grey area, which 
corresponds to Proposition \ref{prop2.4}, there are some $(n,d,k)$ for which possibly 
$B(n,d,k) = \emptyset$. 
However, for any $(\mu,\lambda)$ in this area, there exist $(n,d,k)$ with $\mu = \frac{d}{n},\; \lambda = \frac{k}{n}$ such that $B(n,d,k) \neq \emptyset$.

The dots represent points for which some $B(n,d,k) \neq \emptyset$. The series of dots  
arise from Propositions \ref{p3.6}, \ref{p4.9}, \ref{p4.12}.
There are also isolated dots corresponding to Propositions \ref{p4.13}, \ref{p4.14} and \ref{p6.4}.
The dot at $(3,\frac{8}{5})$ may not represent a bundle, but is the upper bound established in Lemma \ref{l5.8}.

The BN-curve (the thin curve in the figures) given by $\lambda(\lambda - \mu  + 4) = 4$ (or $\beta(n,d,k) = 1$) passes through the points $
(2,-1 + \sqrt{5}), (\frac{7}{3},\frac{4}{3})$, $(\frac{8}{3}, \frac{1}{3}(-2 + \sqrt{40}))$, $(3,\frac12(-1+\sqrt{17}))$ and $(4,2)$. The bundle $D(K_C)$ in Figures 1 and 3 lies marginally above the curve and corresponds to the value $\beta=0$; the bundles $U$ and $T$ in Figure 1 correspond to the value $\beta=-1$.
All the bundles constructed in this paper in the case $\Cl(C) = 2$ have $\beta(n,d,k) \geq 0$, but this does not rule out the possibility that $B(n,d,k)$ could be non-empty for some 
$(n,d,k)$ with $\beta(n,d,k) < 0$ even in this case.

We turn now to the case of Clifford index $2$, represented by Figures 3 and 4. Note that in Figure 3, the thick upper line applies to any curve of Clifford index  2 (Theorem \ref{thm5.2}) and the lower line to a general curve (Theorem \ref{thm5.3}). The vertical line at $\mu = 4$ in Figure 4 is not included. The series of dots arise from  Corollaries \ref{c5.5}, \ref{c5.6} and Proposition \ref{p5.7}.

\begin{tikzpicture}
\path[draw][->] (2,-0.2) -- (2,8.2) node[pos=0.98,left] {$\lambda$} node[pos=.34,left] {$4/3$};
\path[draw][->] (0,0 ) -- (10.2,0) node[pos=1.02,right] {$\mu$} node[pos=.6,below] {$5/2$} node[pos=0.21,below] {$2$} node[pos=0.98,below] {$3$} node[pos=0,left] {$1$}; 
               
\path[draw] (10, 0) -- (10,8) node[pos=0.67,right] {$5/3$} node[pos=0.62,right] {$13/8$} node[pos=0.2,right] {$6/5$} 
node[pos=0.5,right] {\eqref{e5.1}} node[pos=1,right] {$2$} node[pos=0.25,right] {$D(K_C)(p)$}; 
\path[draw][dashed] (2,8) -- (10,8);
\path[draw,line width=1.2pt] (2,2) -- (4.67,2.67); 
\path[draw,line width=1.2pt] (4.67,2.67) -- (6,4); 
\path[draw,line width=1.2pt] (7.33,4) -- (10,5); 
\path[draw,line width=1.2pt] (6,4) -- (7.33,4);
\path[draw,line width=1.2pt] (6,4) -- (10,5.33);
\path[fill=black] (2,0)--(10,0)--(10,1.6)--cycle;
\path[fill=black] (0,0)--(2,0)--(2,1.6)--(0,1.2)-- cycle;
\fill (4.67,2.67) circle (2pt);
\fill (10,2) circle (2pt);
\fill (6,2) circle (1.2pt);
\fill (10,2.67) circle (1.2pt);
\fill (10,4) circle (2pt);
\fill (2,2) circle (2pt);
\path[draw][dashed] (6,0) -- (6,8);
\path[draw][dashed] (2,4) -- (10,4) node[pos=0,left]{$3/2$};
\path[draw][dashed] (4.67,0) -- (4.67,8) node[pos=0.33,below]{$E_L$} node[pos=0,below]{$7/3$};
\path[draw][dashed] (7.33,0) -- (7.33,8) node[pos=0,below]{$8/3$};
\path[draw][dashed] (2,2.67) -- (10,2.67);
\path[draw][dashed] (2,1.6) -- (10,1.6);
\path[draw][dashed] (2,2) -- (10,2) node[pos=0,left]{$D(K_C)$};

\fill (4,2) circle (1pt);
\fill (3.6,1.6) circle (1pt);
\fill (3.33,1.33) circle (1pt);
\fill (3,1) circle (1pt);
\fill (2.89,0.89) circle (1pt);
\fill (2.8,0.8) circle (1pt);
\fill (2.73,0.73) circle (1pt);
\fill (2.67,0.67) circle (1pt);
\fill (2.62,0.62) circle (1pt);
\fill (2.57,0.57) circle (1pt);
\fill (2.53,0.53) circle (1pt);
\fill (2.5,0.5) circle (1pt);
\fill (2.47,0.47) circle (1pt);
\path[draw,line width=1.2pt] (2,0) -- (2.47,0.47);

\fill (3,2) circle (1pt);
\fill (2.8,1.6) circle (1pt);
\fill (2.73,1.45) circle (1pt);
\fill (2.67,1.33) circle (1pt);
\fill (2.62,1.23) circle (1pt);
\fill (2.57,1.14) circle (1pt);
\fill (2.53,1.97) circle (1pt);
\fill (2.5,1) circle (1pt);
\fill (2.47,0.94) circle (1pt);
\fill (2.44,0.88) circle (1pt);
\path[draw,line width=1.2pt] (2,0) -- (2.44,0.88);

\fill (2.67,2) circle (1pt);
\fill (2.62,1.85) circle (1pt);
\fill (2.57,1.71) circle (1pt);
\fill (2.53,1.6) circle (1pt);
\fill (2.5,1.5) circle (1pt);
\fill (2.47,1.41) circle (1pt);
\fill (2.44,1.33) circle (1pt);
\fill (2.42,1.26) circle (1pt);
\fill (2.4,1.2) circle (1pt);
\fill (2.38,1.14) circle (1pt);
\path[draw,line width=1.2pt] (2,0) -- (2.38,1.14);

\fill (2.5,2) circle (1pt);
\fill (2.47,1.88) circle (1pt);
\fill (2.44,1.78) circle (1pt);
\fill (2.42,1.68) circle (1pt);
\fill (2.4,1.6) circle (1pt);
\fill (2.38,1.52) circle (1pt);
\fill (2.36,1.45) circle (1pt);
\fill (2.35,1.39) circle (1pt);
\path[draw,line width=1.2pt] (2,0) -- (2.35,1.39);

\fill (3.78,0.89) circle (1pt);
\fill (3.6,0.8) circle (1pt);
\fill (3.45,0.73) circle (1pt);
\fill (3.33,0.67) circle (1pt);
\fill (3.23,0.62) circle (1pt);
\fill (3.14,0.57) circle (1pt);
\fill (3.07,0.53) circle (1pt);
\fill (3,0.5) circle (1pt);
\fill (2.94,0.47) circle (1pt);
\fill (2.89,0.44) circle (1pt);
\fill (2.84,0.42) circle (1pt);
\fill (2.8,0.4) circle (1pt);
\path[draw,line width=1.2pt] (2,0) -- (2.8,0.4);

\fill (3.23,1.23) circle (1pt);
\fill (3.14,1.14) circle (1pt);
\fill (3.07,1.07) circle (1pt);
\fill (3,1) circle (1pt);
\fill (2.94,0.94) circle (1pt);
\fill (2.89,0.89) circle (1pt);
\fill (2.84,0.84) circle (1pt);
\fill (2.8,0.8) circle (1pt);
\path[draw,line width=1.2pt] (2,0) -- (2.8,0.8);

\fill (2.94,1.41) circle (1pt);
\fill (2.89,1.33) circle (1pt);
\fill (2.84,1.26) circle (1pt);
\fill (2.8,1.2) circle (1pt);
\fill (2.76,1.14) circle (1pt);
\fill (2.73,1.09) circle (1pt);
\fill (2.7,1.04) circle (1pt);
\fill (2.67,1) circle (1pt);
\path[draw,line width=1.2pt] (2,0) -- (2.67,1);

\fill (2.76,1.52) circle (1pt);
\fill (2.73,1.45) circle (1pt);
\fill (2.45,1.39) circle (1pt);
\fill (2.67,1.33) circle (1pt);
\fill (2.64,1.28) circle (1pt);
\fill (2.62,1.23) circle (1pt);
\fill (2.59,1.19) circle (1pt);
\fill (2.57,1.14) circle (1pt);
\path[draw,line width=1.2pt] (2,0) -- (2.57,1.14);

\fill (3.6,1.6) circle (1pt);
\fill (2.8,1.6) circle (1pt);
\fill (2.53,1.6) circle (1pt);
\fill (2.4,1.6) circle (1pt);
\fill (2.32,1.6) circle (1pt);
\fill (2.27,1.6) circle (1pt);
\fill (2.23,1.6) circle (1pt);
\path[draw,line width=1.2pt] (2,1.6) -- (2.23,1.6);

\fill (8,2) circle (1pt);
\fill (9,3) circle (1pt);
\fill (9.2,3.2) circle (1pt);
\fill (9.33,3.33) circle (1pt);
\fill (9.43,3.43) circle (1pt);
\fill (9.5,3.5) circle (1pt);
\fill (9.56,3.56) circle (1pt);
\fill (9.6,3.6) circle (1pt);
\fill (9.64,3.64) circle (1pt);
\fill (9.67,3.67) circle (1pt);
\path[draw,line width=1.2pt] (9.67,3.67) -- (10,4);

\fill (7.33,2.67) circle (1pt);
\fill (6,2.67) circle (1pt);
\fill (5.56,2.67) circle (1pt);
\fill (5.33,2.67) circle (1pt);
\fill (5.22,2.67) circle (1pt);
\fill (5.11,2.67) circle (1pt);
\fill (5.05,2.67) circle (1pt);
\fill (4.96,2.67) circle (1pt);
\fill (5,2.67) circle (1pt);
\fill (4.93,2.67) circle (1pt);
\fill (4.91,2.67) circle (1pt);
\path[draw,line width=1.2pt] (4.67,2.67) -- (4.91,2.67);

\path[draw] (2,2) .. controls (4.67,2.6) .. (10,4.5);

\path[draw,line width=0.2pt] (2.4,-1.3) -- (8.5,-1.3) node[pos=0.5,sloped,above] {Figure 3: $\Cl(C) = 2,\; 2 \leq \mu \leq 3$}; 

\end{tikzpicture}

\bigskip

\bigskip

 \begin{tikzpicture}
\path[draw][->] (0,-0.05) -- (0,8)    node[pos=1,above] {$\lambda$}   node[pos=0,below] {$3$}  node[pos=1,left]{$2$}  node[pos=0.67,left] {$5/3$};
\path[draw][->] (-0.2,0 ) -- (8.2,0) node[pos=1.02,right] {$\mu$}  node[pos=0.98,below] {$4$} node[pos=0,left] {$1$}; 
               
\path[draw][dashed] (0,8) -- (8,8);
\path[draw] (8,0) -- (8,8)  node[pos=1,right] {$Q$};
\path[draw,line width=1.2pt] (0,5.33) -- (8,8);

\path[fill=black] (0,0)--(8,0)--(8,8)--cycle;

\fill (8,8) circle (2pt);

\fill (4,4) circle (1pt);
\fill (2,4) circle (1pt);
\fill (1.33,4) circle (1pt);
\fill (1,4) circle (1pt);
\fill (0.8,4) circle (1pt);
\fill (0.67,4) circle (1pt);
\fill (0.57,4) circle (1pt);
\fill (0.5,4) circle (1pt);
\fill (0.44,4) circle (1pt);
\fill (0.4,4) circle (1pt);
\fill (0.36,4) circle (1pt);
\path[draw,line width=1.3pt] (0,4) -- (0.36,4);

\path[draw] (0,4.49) .. controls (4,6.124) .. (8,8);

\path[draw,line width=0.2pt] (0.45,-1.1) -- (6.55,-1.1) node[pos=0.5,sloped,above] {Figure 4: $\Cl(C) = 2,\; 3< \mu \leq 4$}; 

\end{tikzpicture}


\begin{thebibliography}{CAV}
\bibitem{b} E. Ballico:
\emph{Brill-Noether theory for vector bundles on projective curves}.
Proc. Camb. Phil. Soc. 128 (1998), 483-499.
\bibitem{bf}
A.~Bertram and B.~Feinberg,
\emph{On stable rank two bundles with canonical determinant and many sections}, 
in  Algebraic Geometry (Catania, 1993/Barcelona 1994), Lecture Notes in Pure and Appl. Math. {\bf 200}, Marcel Dekker, New York, 1998, 259--269.
\bibitem{bbn} U. N. Bhosle, L. Brambila-Paz and P. E. Newstead:
\emph{On linear series and a conjecture of D. C. Butler}.
Internat. J. Math. 26 , no. 1 (2015) 1550007 (18 pages), doi: 10.1142/S0129167X1550007X.
\bibitem{bgn} L. Brambila-Paz, I. Grzegorczyk and P. E. Newstead:
\emph{Geography of Brill-Noether loci for small slopes}.
J. Alg. Geom. 6 (1997), 645-669.
\bibitem{bmno} L. Brambila-Paz, V. Mercat, P. E. Newstead and F. Ongay:
\emph{Nonemptiness of Brill-Noether loci}.
Int. J. Math. 11 (2000), 737-760.
\bibitem{bu} D. C. Butler:
\emph{Birational maps of moduli of Brill-Noether pairs}.
arXiv:alg-geom/9705009v1.
\bibitem{ln} H. Lange and P. E. Newstead:
\emph{Bundles computing Clifford indices on trigonal curves}.
Arch. Math. 101 (2013), 21-31.
\bibitem{ln3}H. Lange and P. E. Newstead:
\emph{Vector bundles of rank $2$ computing Clifford indices}.
Comm. in Algebra 41, 2317-2345 (2013).
\bibitem{ln2}H. Lange and P. E. Newstead:
\emph{Higher rank BN-theory for curves of genus $4$}.
arXiv 1511.02925.
\bibitem{m1} V. Mercat: 
\emph{Le probl\`eme de Brill-Noether pour des fibr\'es stables de petite pente}.
J. reine angew. Math. 506 (1999), 1-41.
\bibitem{m3} V. Mercat: 
\emph{Le probl\`eme de Brill-Noether et le th\'eor\`eme de Teixidor}.
Manuscr. math. 98. 75-85 (1999).
\bibitem{m2} V. Mercat:
\emph{Fibr\'es stables de pente 2}.
Bull. London Math. Soc. 33 (2001), 535-542.
\bibitem{m} V. Mercat:
\emph{Clifford's theorem and higher rank vector bundles}.
Int. J. Math. 13 (2002), 785-796.
\bibitem{re} R. Re:
\emph{Multiplication of sections and Clifford bounds for stable vector bundles on curves}.
Comm. in Alg. 26 (1998), 1931-1944. 
\bibitem{te} M. Teixidor i Bigas:
\emph{Brill-Noether theory for stable vector bundles}.
Duke Math. J. 62 (1991), 385-400. 
\end{thebibliography}
\end{document}